\def\GL{\mathrm{GL}}\def\SL{\mathrm{SL}}
\def\End{\mathrm{End}}\def\Hom{\mathrm{Hom}}\def\Ker{\mathrm{Ker}}
\def\Der{\mathrm{Der}}\def\bider{\mathrm{BiDer}}
\def\Bider{\mathrm{BiDer}}\def\BiDer{\mathrm{BiDer}}
\def\Inder{\mathrm{InnDer}}\def\InDer{\mathrm{InnDer}}\def\CoDer{\mathrm{CoDer}}
\def\Alt{\mathrm{Alt}}
\def\D{\mathcal{D}}
\def\hh{\mathrm{height}}\def\ra{\overline}\def\sub{\underline}
\def\w{\wedge}\def\el{\ell}
\def\wt{\widetilde}\def\ot{\otimes}
\def\ad{\mathrm{ad}}\def\id{\mathrm{Id}}\def\Id{\mathrm{Id}}
\def\ie{{\em{i.e. }}}\def\Om{\Omega}
\def\b{\mathfrak{b}}\def\l{\mathfrak{L}}\def\r{\mathfrak{r}}
\def\u{\mathfrak{u}}\def\U{\mathcal{U}}\def\g{\mathfrak{g}}
\def\s{\mathfrak{s}}\def\n{\mathfrak{n}}
\def\h{\mathfrak{h}}\def\su{\mathfrak{su}}
\def\sl{\mathfrak{sl}}\def\gl{\mathfrak{gl}}\def\aff{\mathfrak{aff}}
\def\Z{\mathcal{Z}}\def\ZZ{\mathbb{Z}}
\def\k{\mathbb{K}}\def\K{\mathbb{K}}\def\E{\mathbb{E}}
\def\R{\mathbb{R}}\def\C{\mathbb{C}}\def\DD{\mathbb{D}}
\theoremstyle{plain} 
\newtheorem{theorem}{Theorem}[section] 
\newtheorem{prop}[theorem]{Proposition} 
\newtheorem{teo}[theorem]{Theorem}
\newtheorem{coro}[theorem]{Corollary}
\newtheorem{lema}[theorem]{Lemma}
\theoremstyle{definition} 
\newtheorem{defi}[theorem]{Definition}
\newtheorem{ex}[theorem]{Example}
\newtheorem{obs}[theorem]{Remark}
\newtheorem{rem}[theorem]{Remark}
\title{Trivial Central Extensions of Lie Bialgebras}
\author{Marco A. Farinati\thanks{Member of CONICET. Partially supported by UBACyT X051, PICT
2006-00836 and MathAmSud 10-math-01 OPECSHA, mfarinat@dm.uba.ar.}
and A. Patricia Jancsa
\thanks{Partially supported by UBACyT X051, PICT 2006-00836 and MathAmSud 10-math-01 OPECSHA,
pjancsa@dm.uba.ar.}
}
\begin{document}
\maketitle

\noindent {\small Dto. de Matem\'atica, FCEyN, Universidad de Buenos Aires\\
Pabell\'on I, Cdad. Universitaria, 1428 
Buenos Aires, Argentina.}

\begin{abstract} From a Lie algebra $\g$ satisfying $\Z(\g)=0$ and
$\Lambda^2(\g)^\g=0$ (in particular, for $\g$ semisimple) we describe explicitly all 
 Lie bialgebra structures on extensions of the form $\l =\g\times \K$ in terms of 
 Lie bialgebra structures on $\g$ (not necessarily factorizable nor quasi-triangular) 
and its biderivations, for any field $\K$ with char $\K=0$. If moreover, $[\g,\g]=\g$,
 then we describe also all  Lie bialgebra structures on extensions $\l =\g\times \K^n$.
In interesting cases we characterize the Lie algebra of biderivations.
\end{abstract}

\section{Introduction and preliminaries}

Recall \cite{D1,D2} that a Lie bialgebra over a field $\K$ is a triple $(\g,[-,-],\delta)$
where $(\g,[-,-])$ is a Lie algebra over $\K$
 and $\delta:\g\to\Lambda^2\g$
is such that 
\begin{itemize}
 \item  
$\delta\colon \g\to \Lambda^2\g$
satisfies co-Jacobi identity, namely $\Alt((\delta\ot\id)\circ\delta)=0$,
 \item $\delta\colon \g\to\Lambda^2\g$ is a 1-cocycle in the Chevalley-Eilenberg
complex of the Lie algebra $(\g,[-,-])$ with coefficients in $\Lambda^2\g$.
\end{itemize}
In the finite dimensional case, 
$\delta\colon \g\to \Lambda^2\g$
satisfies co-Jacobi identity if and only if
the bracket defined by
$\delta^*\colon  \Lambda^2\g^*\to \g^*$
satisfies Jacobi identity.
In general,  co-Jacobi identity for $\delta$
is equivalent to the fact that the unique derivation of degree one
$\partial_\delta\colon \Lambda^\bullet \g\to \Lambda^\bullet \g$, whose restriction to $\g$ agrees with $\delta$, satisfies $\partial_\delta^2=0$.
We will usually denote a Lie bialgebra,  with underlying Lie algebra  $\g=(\g,[-,-])$, by $(\g,\delta)$.
A Lie bialgebra $(\g,\delta)$ is said a coboundary Lie bialgebra
if there exists $r\in\Lambda ^2\g$ such that $\delta (x)=\ad _x(r)$ $\forall x\in\g$; \ie
 $\delta =\partial r$ is a 1-coboundary in the Chevalley-Eilenberg complex with coefficients in $\Lambda ^2\g$.  Coboundary Lie bialgebras are denoted by $(\g, r)$, although $r$ is in general not unique.
We have that $r$ and $r'$ give rise to the same cobracket if and only if
$r-r'\in(\Lambda^2\g)^\g$, so $r$ is uniquely determined
by $\delta$ in the semisimple
case, since $(\Lambda^2\g)^\g=0$ for $\g$ semisimple.

Recall that $r\in\g\ot\g$ satisfies the {\em classical Yang-Baxter equation}, CYBE for short, if
\[
[r^{12}, r^{13}] + [r^{12}, r^{23}] + [r^{13}, 
r^{23}]=0,
\]
where the Lie bracket is taken in the repeated index; for example, if $r=\sum _{i}r_i\ot r^i$ then
$r^{12}:=r\ot 1$, $ r^{13}:=\sum _{i}r_i\ot 1\ot r^i$
and $r^{23}:=1\ot r\in \U(\g)^{\ot 3}$, so
$[r^{12}, r^{13}]=\sum _{i,j}[r_i,r_j]\ot r^i\ot r^j\in\g\ot\g\ot\g\hookrightarrow\U(\g)^{\ot 3}$, and so on for the other terms of CYBE.
We denote the left hand-side of CYBE by CYB($r$). 

If $r\in\Lambda ^2\g$, then $\delta =\partial r$ satisfies  co-Jacobi if and only if CYB$(r)\in\Lambda ^3\g$ is $\g$-invariant.
 If $(\g,r)$ is a coboundary Lie bialgebra and $r$ satisfies CYBE,
$(\g,r)$ is said {\em triangular}.
A Lie bialgebra is {\em quasi-triangular} if there exist $r\in\g\otimes \g$, not necessarily skewsymmetric, such that $\delta (x)=\ad _x(r)$ $\forall x\in\g$ and $r$ satisfies CYBE;
if, moreover,  the symmetric component of $r$ induces a nondegenerate inner
 product on $\g ^*$, then $(\g,\delta)$ is said {\em factorizable}
\cite{RS}.
Quasi-triangular Lie bialgebras are also denoted by $(\g, r)$, although $r$ is in general not unique. Nevertheless, in the semisimple case the skew symmetric component
$r_\Lambda$ of $r$ is uniquely determined by $\delta$.
A quasi-triangular Lie bialgebra $(\g,r)$ is, in particular, a coboundary Lie bialgebra, with
the coboundary chosen as the skewsymmetric component of $r$.

If $(\g,\delta)$ is a real Lie bialgebra, then $\g\ot_\R\C$ is a complex
Lie bialgebra with cobracket
$\delta\ot_\R\id_\C:\g\ot_\R\C\to (\Lambda^2_\R \g)\ot_\R\C\cong
\Lambda^2_\C (\g\ot_\R\C)$. A real Lie bialgebra is coboundary if and only if
its complexification is coboundary. On the other hand, it may happen
that $(\g\ot_\R\C,\delta\ot_\R\Id_\C)$ 
 is  factorizable but $(\g,\delta)$
is not; in this case we call it {\em almost factorizable}. 

\subsection*{The theorem of Belavin and Drinfeld}
Let $\g$ be a complex simple Lie algebra, $\Omega\in (S^2\g)^\g$ the {\em Casimir} element corresponding to a 
fixed non-degenerate, symmetric, invariant, bilinear form $(-,-)$ on $\g$, and let $\h \subset \g$ be a Cartan subalgebra. Let $\Delta$ be a
choice of a set of simple roots.
A Belavin-Drinfeld triple (BD-triple for short) is a triple $(\Gamma_1, \Gamma_2,\tau)$, where
$\Gamma_1, \Gamma_2$ are subsets of $\Delta$, and $\tau:\Gamma_1 \to\Gamma_2$ is a bijection that preserves the inner product
and satisfies the nilpotency condition: for any $\alpha\in\Gamma_1$, there exists a positive integer $n$ for
which $\tau ^n(\alpha)$ belongs to $\Gamma_2$ but not to $\Gamma_1$.
Let $(\Gamma_1, \Gamma_2,\tau)$ be a BD triple. Let 
$\widetilde\Gamma_i$ be the set of positive roots lying
in the subgroup generated by $\Gamma_i$, for $i = 1, 2$. There is an associated partial order on
$\Phi^+$ given by $\alpha\prec\beta$ if $\alpha\in\widetilde\Gamma_1,\ \beta\in\widetilde\Gamma_2$ and $\beta = \tau ^n(\alpha)$ for a positive integer $n$.
A continuous parameter for the BD triple $(\Gamma_1, \Gamma_2,\tau)$ is an element
$r_0\in\h\ot\h$ such that
$(\tau (\alpha)\ot\id +\id\ot\alpha)r_0=0\
\forall\alpha\in\Gamma_1$, and $r_0+r_0^{21}=\Omega_0$, the $\h\ot\h$-component of $\Omega$.

\begin{teo}\label{BD-teo}
(Belavin-Drinfeld, see \cite{BD}). Let $(\g,\delta)$ be a factorizable complex simple Lie
bialgebra. Then there exists a non-degenerate, symmetric, invariant, bilinear form on $\g$
with corresponding Casimir element $\Om$,
 a Cartan subalgebra $\h$, a system of simple roots 
$\Delta$, a BD-triple  $(\Gamma_1, \Gamma_2,\tau)$
and 
continuous parameter $r_0\in\h\ot\h$ 
such that
$\delta(x)=\ad_x(r)$ for all $x\in\g$, with $r$ given by
\begin{eqnarray}
r =
r_0
+\sum_{\alpha\in\Phi^+}x_{-\alpha}\ot x_{\alpha} 
+\sum_{\alpha\in\Phi^+:\alpha\prec\beta}x_{-\alpha}\wedge x_{\beta} 
\ \label{BD-r-matrix}
\end{eqnarray}
where $x_{\pm\alpha}\in\g_{\pm\alpha}$, $\pm\alpha\in\pm\Phi^+$ are root vectors normalized by
$(x_{\alpha},x_{-\alpha})=1$,
$\forall \pm\alpha\in\pm\Phi^+$,
Clearly, $r + r^{21} = 
\Omega$.

Reciprocally, any $r$  of the form given above satifies CYBE and endows the Lie algebra $\g$ of a factorizable Lie bialgebra structure. 
\end{teo}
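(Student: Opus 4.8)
The plan is to prove the two implications of the statement separately: the ``reciprocal'' direction is essentially a verification, while the forward direction is the genuine classification.

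\textbf{Verifying CYBE and factorizability for the listed $r$.} For $r$ as in \eqref{BD-r-matrix} one has $r+r^{21}=\Omega$ by construction, so its symmetric part $\tfrac12\Omega\in(S^2\g)^\g$ is $\g$-invariant. Writing $r=\tfrac12\Omega+a$ with $a\in\Lambda^2\g$ (so $a=a_0+\sum_{\alpha\in\Phi^+}x_{-\alpha}\w x_\alpha+\sum_{\alpha\prec\beta}x_{-\alpha}\w x_\beta$ and $a_0=r_0-\tfrac12\Omega_0$), a short manipulation using invariance of $\Omega$ reduces $\mathrm{CYB}(r)=0$ to the modified classical Yang--Baxter equation for the skew part: $\mathrm{CYB}(a)$ must equal a fixed $\g$-invariant multiple of $[\Omega^{12},\Omega^{23}]$. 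One then evaluates both sides componentwise in the root grading of $\g^{\ot 3}$: the diagonal $\h^{\ot 3}$-part is handled by $r_0+r_0^{21}=\Omega_0$, and each off-diagonal component collapses by a telescoping argument in which the conditions $(\tau(\alpha)\ot\id+\id\ot\alpha)r_0=0$ supply the matching of the Cartan contributions, while the nilpotency condition on $\tau$ guarantees that the chains $\alpha\prec\tau\alpha\prec\tau^2\alpha\prec\cdots$ terminate, so the relevant sums are finite and cancel. This is bookkeeping-heavy but mechanical; I would package it as (i) a lemma performing the reduction to the modified equation, and (ii) the grading-by-grading cancellation. Once $\mathrm{CYB}(r)=0$ is known, $\delta(x)=\ad_x(r)$ is automatically a $1$-cocycle, and $\delta=\ad(a)$ with $a\in\Lambda^2\g$ and $\mathrm{CYB}(a)$ invariant, so $\delta$ satisfies co-Jacobi; hence $(\g,\delta)$ is a Lie bialgebra, quasi-triangular by construction and factorizable because its symmetric part $\tfrac12\Omega$ is nondegenerate.

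\textbf{The classification direction.} Conversely, let $(\g,\delta)$ be factorizable, $\delta=\ad(r)$ with $r\in\g\ot\g$ whose symmetric part $r_+$ induces a nondegenerate form. Since $\delta$ takes values in $\Lambda^2\g$, $\ad_x(r+r^{21})=0$, so $r_+\in(S^2\g)^\g$; this space is one-dimensional by simplicity, so after rescaling the invariant form we arrange $r+r^{21}=\Omega$ and write $r=\tfrac12\Omega+a$ with $a\in\Lambda^2\g$ satisfying the modified Yang--Baxter equation. The heart of the argument is to bring $a$ into triangular normal form: analyse the operator $R\colon\g^*\to\g$ attached to $a$ via $\Omega$, extract its maximal $\ad$-nilpotent piece, and show that after applying an automorphism of $\g$ this piece is a sum of positive root spaces, producing a Cartan subalgebra $\h$ and a system of simple roots $\Delta$ in which $a=a_0+\sum_{\alpha\in\Phi^+}x_{-\alpha}\w x_\alpha+(\text{terms mixing distinct root spaces of }\Phi^+)$ with $a_0\in\Lambda^2\h$. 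The mixed terms define a bijection $\tau$ between subsets $\Gamma_1,\Gamma_2\subseteq\Delta$; substituting back into the modified equation forces $\tau$ to preserve the inner product and to be nilpotent (a cycle $\alpha\to\tau\alpha\to\cdots\to\alpha$ contradicts the equation), so $(\Gamma_1,\Gamma_2,\tau)$ is a Belavin--Drinfeld triple, and the diagonal $\h\ot\h$-component of the equation says exactly that $r_0:=\tfrac12\Omega_0+a_0$ is a continuous parameter. Reconciling the surviving mixed terms with the partial order $\prec$ generated by $\tau$ yields precisely the shape \eqref{BD-r-matrix}.

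\textbf{Main obstacle.} The genuinely delicate step is the normalization: proving that $a$ can be conjugated into triangular form and that the resulting discrete datum $\tau$ is canonical. This is where the structure theory of complex simple Lie algebras (root strings, maximal nilpotent subalgebras, and the smallness of $(S^2\g)^\g$, $(\Lambda^3\g)^\g$ together with $(\Lambda^2\g)^\g=0$) is indispensable, as opposed to the formal manipulation of the Yang--Baxter identity used elsewhere; everything after the normal form is comparatively routine. As this is the theorem of Belavin and Drinfeld, in the body of the present paper I would simply invoke \cite{BD} and use the statement as given.
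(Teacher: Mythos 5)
The paper does not prove this theorem: it is quoted verbatim from the literature with the attribution ``(Belavin--Drinfeld, see \cite{BD})'', so there is no in-paper argument to compare yours against, and your closing remark --- that one should simply invoke \cite{BD} and use the statement as given --- is exactly what the paper does. As for your sketch itself, it is a faithful outline of the actual Belavin--Drinfeld strategy (reduction of CYBE to the modified equation for the skew part $a$ with $r=\tfrac12\Omega+a$, grading-by-grading cancellation using the continuous-parameter conditions and the nilpotency of $\tau$ for the direct implication; normalization of $a$ to triangular form and extraction of the BD triple for the converse), but it is a proof plan rather than a proof: the step you yourself flag as the main obstacle --- conjugating $a$ into the normal form \eqref{BD-r-matrix} and showing the discrete datum is forced --- is precisely where all the work of \cite{BD} lives, and it is left entirely as a black box. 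Since the statement is imported, not established, in this paper, leaving it that way and citing \cite{BD} is the correct resolution.
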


The component 
$\sum_{\alpha\in\Phi^+}x_{-\alpha}\w x_\alpha +\Omega$
is called the {\em standard part} and it is denoted by $r_{st}$, so 
$r=
r_{st}+\sum_{\alpha\prec\beta}x_{-\alpha}\w x_\beta
+\lambda
$, if we decompose $r_0=\lambda +\Omega _0$, $\lambda\in\Lambda^2\h$.

\begin{obs}
Some authors have considered more general versions of the previous theorem
 (see \cite{H} and \cite{Del}  for the semisimple and reductive versions).
 In this work, 
 we give a new description for the reductive Lie bialgebras 
without using the previous works but starting from a given Lie bialgebra structure on the semisimple factor $\g$.
\end{obs}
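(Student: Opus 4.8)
The plan is to establish the description announced in the abstract \emph{ab initio}, i.e. to reconstruct every Lie bialgebra structure on the reductive algebra $\l=\g\times\K^n$ (with $\g$ semisimple, or more generally $\Z(\g)=0$, $(\Lambda^2\g)^\g=0$ and $[\g,\g]=\g$) out of a Lie bialgebra structure on the factor $\g$ plus biderivation data, \emph{without} recourse to \cite{H} or \cite{Del}. Fix a basis $z_1,\dots,z_n$ of the central summand $\mathfrak{z}=\K^n$, with dual basis $z^1,\dots,z^n$ of $\mathfrak{z}^*$, so that $\l=\g\oplus\mathfrak{z}$ with $\mathfrak{z}$ central and (using $\Z(\g)=0$) $\Z(\l)=\mathfrak{z}$. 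The adjoint action is the usual one on $\g$-factors and trivial on $\mathfrak{z}$, and the graded decompositions $\Lambda^2\l=\Lambda^2\g\oplus(\g\ot\mathfrak{z})\oplus\Lambda^2\mathfrak{z}$ and $\Lambda^3\l=\Lambda^3\g\oplus(\Lambda^2\g\ot\mathfrak{z})\oplus(\g\ot\Lambda^2\mathfrak{z})\oplus\Lambda^3\mathfrak{z}$ organize the whole argument. I write $\delta_\l(x)=\delta_0(x)+\sum_i\phi_i(x)\w z_i+\om(x)$ for $x\in\g$, with $\delta_0(x)\in\Lambda^2\g$, $\phi_i\colon\g\to\g$ linear and $\om(x)\in\Lambda^2\mathfrak{z}$, and leave $\delta_\l(z_k)\in\Lambda^2\l$ free for the moment.

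First I would use the $1$-cocycle condition to strip away the unwanted components. Testing it on a pair $(x,z_k)$ with $z_k$ central gives $\ad_x\cdot\delta_\l(z_k)=0$ for all $x\in\g$; projecting onto $\Lambda^2\g$ and onto $\g\ot\mathfrak{z}$ and invoking $(\Lambda^2\g)^\g=0$ and $\Z(\g)=0$ respectively forces $\delta_\l(z_k)\in\Lambda^2\mathfrak{z}$, so $\delta_\l|_{\mathfrak{z}}$ is a cobracket on the abelian part. Testing on a pair $(x,y)\in\g\times\g$ and separating the three graded pieces shows that $\delta_0$ is a $1$-cocycle of $\g$ with values in $\Lambda^2\g$, each $\phi_i$ is a $1$-cocycle with values in the adjoint module (a \emph{derivation} of $\g$), and $\om$ is a cocycle for the trivial module, hence vanishes on $[\g,\g]=\g$; this is the one place the hypothesis $[\g,\g]=\g$ enters, and it yields $\om=0$. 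In particular for $n=1$ the same computation already gives $\delta_\l(z)=0$.

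Next I would impose co-Jacobi, $\Alt((\delta_\l\ot\id)\circ\delta_\l)=0$, and read it off degree by degree in $\Lambda^3\l$. The $\Lambda^3\g$-component is exactly co-Jacobi for $\delta_0$, so $(\g,\delta_0)$ is a Lie bialgebra. The $\Lambda^2\g\ot\mathfrak{z}$-component, after alternation, is precisely the coderivation identity $\delta_0\circ\phi_i=(\phi_i\ot\id+\id\ot\phi_i)\circ\delta_0$; combined with the derivation property above this says $\phi_i\in\BiDer(\g,\delta_0)$. Finally, writing $\delta_\l(z_k)=\sum_{i<j}a^k_{ij}\,z_i\w z_j$, the only terms reaching $\g\ot\Lambda^2\mathfrak{z}$ come from the double application of $\phi$'s and from $\phi_i(x)\w\delta_\l(z_i)$, and after alternation they recombine into the operator identity $[\phi_i,\phi_j]=\sum_k a^k_{ij}\,\phi_k$, while the $\Lambda^3\mathfrak{z}$-component is the Jacobi identity for the bracket on $\mathfrak{z}^*$ dual to $\delta_\l|_{\mathfrak{z}}$. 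Since $\BiDer(\g,\delta_0)$ is itself a Lie algebra under the commutator, these relations say exactly that $\Phi\colon\mathfrak{z}^*\to\BiDer(\g,\delta_0)$, $z^i\mapsto\phi_i$, is a Lie algebra homomorphism, where $\mathfrak{z}^*$ carries the bracket dual to $\delta_\l|_{\mathfrak{z}}$. Assembling everything, a Lie bialgebra structure on $\l=\g\times\K^n$ is the same as a triple consisting of a Lie bialgebra $\delta_0$ on $\g$, a Lie bialgebra structure on the abelian $\mathfrak{z}$ (i.e. a Lie bracket on $\mathfrak{z}^*$), and a homomorphism $\Phi\colon\mathfrak{z}^*\to\BiDer(\g,\delta_0)$, via
\[
\delta_\l(x)=\delta_0(x)+\sum_i\Phi(z^i)(x)\w z_i,\qquad \delta_\l(z_k)=\sum_{i<j}a^k_{ij}\,z_i\w z_j;
\]
for $n=1$ this collapses to a single biderivation $\phi$ with $\delta_\l(z)=0$, recovering the $\g\times\K$ case.

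The main obstacle I anticipate is the bookkeeping of the quadratic co-Jacobi expression after alternation: one must verify that the cross-terms between the $\Lambda^2\g$ part of $\delta_0$ and the mixed $\phi_i\w z_i$ parts recombine \emph{exactly} into the coderivation identity in degree $\Lambda^2\g\ot\mathfrak{z}$ and into the homomorphism relation in degree $\g\ot\Lambda^2\mathfrak{z}$, with no residual obstruction, and that each of the three structural hypotheses $\Z(\g)=0$, $(\Lambda^2\g)^\g=0$, $[\g,\g]=\g$ is used precisely where needed (to kill the $\g\ot\mathfrak{z}$ part of $\delta_\l(\mathfrak{z})$, its $\Lambda^2\g$ part, and $\om$, respectively). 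A secondary step, essential for the \emph{explicit} claim and for characterizing the Lie algebra of biderivations in the interesting cases, is to compute $\BiDer(\g,\delta_0)$ and its inner ideal $\Inder(\g,\delta_0)$ concretely; this is where the rigidity of semisimple $\g$ (notably $(\Lambda^2\g)^\g=0$ and the vanishing of the relevant Chevalley--Eilenberg cohomology) and the coboundary/quasi-triangular form of $\delta_0$ from Theorem~\ref{BD-teo} enter, turning the abstract bijection above into an honest list of structures.
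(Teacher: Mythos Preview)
The statement you are ``proving'' is a \emph{Remark}: it has no proof in the paper, it merely announces the programme carried out in Theorems~\ref{propmachine} and~\ref{teo-producto}. Your proposal is in effect a sketch of those two theorems, and it is correct and follows essentially the same route as the paper: the same graded decomposition $\Lambda^2\l=\Lambda^2\g\oplus(\g\ot\mathfrak z)\oplus\Lambda^2\mathfrak z$, the same use of the cocycle condition on pairs $(x,z_k)$ to force $\delta(\mathfrak z)\subseteq\Lambda^2\mathfrak z$ (this is Proposition~\ref{central}(2) plus $(\Lambda^2\g)^\g=0$, $\Z(\g)=0$), the same use of $[\g,\g]=\g$ to kill your $\omega$ (Proposition~\ref{central}(1)), and the same degree-by-degree reading of co-Jacobi to extract the coderivation identity in $\Lambda^2\g\ot\mathfrak z$ and the Lie-map relation $[\phi_i,\phi_j]=\sum_k a^k_{ij}\phi_k$ in $\g\ot\Lambda^2\mathfrak z$.

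The only differences are organisational: the paper splits the argument into a forward construction (Theorem~\ref{propmachine}) and a converse (Theorem~\ref{teo-producto}), with Proposition~\ref{central} isolated as a lemma, whereas you run both directions at once; and the paper normalises with a factor $2$ in $\delta(x)=\delta_\g(x)+2\sum_i D_i(x)\wedge t_i$, so your $\phi_i$ is the paper's $2D_i$. Neither affects the substance.
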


Our point of view is the following: From a Lie algebra $\g$ over  a field $\K$ with char $\K=0$ satisfying $\Z(\g)=0$ and
$\Lambda^2(\g)^\g=0$ we describe explicitly all the Lie bialgebra structures on extensions of the form $\l =\g\times \K$ 
in terms of Lie bialgebra structures on $\g$ and its biderivations. 
If moreover, $[\g,\g]=\g$, then we describe also all the Lie bialgebra structures on extensions $\l =\g\times \K^d$ for any $d$.
In the semisimple factorizable case, the Lie bialgebra structures on $\g$ are known \cite{BD,AJ,Del}; we make a detailed analysis of the 
biderivations in this case and give an alternative description of the extensions to reductive Lie bialgebras. This characterization includes the reductive factorizable case, but actually we obtain all Lie bialgebra structures on $\l =\g\times \K^d$ that restrict to a given 
Lie bialgebra structure on $\g$, which include non-factorizable and even non-coboundary ones. The latter were not considered in  previous works.


\subsection*{The center and the derived ideal $ [\g, \g ]$}
The next statement is straight but useful.

\begin{prop} \label{central} 
Let $\l$ be a Lie algebra and $\delta:\l\to\Lambda ^2\l$
a 1-cocycle, then
\begin{enumerate}
\item $[\l,\l]$ is a coideal \ie $\delta [\l,\l]\subseteq [\l,\l]\w\l$. As a consequence, if $(\l,\delta)$ is a Lie bialgebra then
the quotient $\l / [\l, \l ]$ admits a unique Lie bialgebra structure such that the canonical
 projection is a Lie bialgebra map.
Moreover, if  $(\l,\delta _1)\cong (\l,\delta _2)$ as Lie bialgebras, then 
$(\l /[\l,\l],\ra{\delta }_1)\cong (\l /[\l,\l],\ra{\delta} _2)$.

\item If $\Z(\l)$ denotes the center of the Lie algebra $\l$, then
 $\delta (\Z(\l))\subseteq\Lambda ^2(\l)^{\l}$.
\end{enumerate}\end{prop}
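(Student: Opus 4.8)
The plan is to prove both items by exploiting the two defining properties of $\delta$: the 1-cocycle condition and (for the bialgebra consequences) co-Jacobi, together with the corresponding facts about the bracket and the coideal/ideal structure.

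\medskip

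\noindent\textbf{Item (1).} First I would write out the 1-cocycle condition explicitly: for all $x,y\in\l$,
\[
\delta([x,y]) = \ad_x(\delta y) - \ad_y(\delta x),
\]
where $\ad_x$ acts on $\Lambda^2\l$ as a derivation, i.e. $\ad_x(u\w v)=[x,u]\w v + u\w[x,v]$. From this, if $x\in\l$ and $y\in\l$ are arbitrary, then $\delta([x,y])$ is a sum of terms of the form $[x,u]\w v$, $u\w[x,v]$, $[y,\cdot]\w\cdot$, etc.; each such term lies in $[\l,\l]\w\l$ because at least one factor is a bracket. Since $[\l,\l]$ is spanned by elements $[x,y]$, linearity gives $\delta([\l,\l])\subseteq [\l,\l]\w\l$, which is exactly the statement that $[\l,\l]$ is a coideal. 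For the quotient statement, $[\l,\l]$ is an ideal of the Lie algebra, so $\l/[\l,\l]$ is a Lie algebra (in fact abelian); being a coideal, $\delta$ descends to a well-defined map $\bar\delta\colon \l/[\l,\l]\to \Lambda^2(\l/[\l,\l])$ via the projection $\pi$, and one checks $\bar\delta$ is again a 1-cocycle (automatic since the target has trivial bracket) and satisfies co-Jacobi (push forward $\partial_\delta^2=0$, or note that on an abelian quotient co-Jacobi for $\bar\delta$ is the assertion that $\bar\delta^*$ defines a Lie bracket on the dual, which follows by naturality from co-Jacobi for $\delta$). Uniqueness of $\bar\delta$ is forced by surjectivity of $\pi$. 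For the last sentence, if $\varphi\colon(\l,\delta_1)\to(\l,\delta_2)$ is a Lie bialgebra isomorphism, then $\varphi$ preserves $[\l,\l]$ (it is a Lie algebra automorphism), hence induces $\bar\varphi$ on $\l/[\l,\l]$, and the compatibility $\pi\circ\varphi = \bar\varphi\circ\pi$ together with the intertwining of $\delta_i$ with $\varphi$ yields that $\bar\varphi$ intertwines $\bar\delta_1$ and $\bar\delta_2$.

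\medskip

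\noindent\textbf{Item (2).} Let $z\in\Z(\l)$, so $[z,x]=0$ for all $x\in\l$. Apply the 1-cocycle identity with $x=z$: for any $y\in\l$,
\[
\delta([z,y]) = \ad_z(\delta y) - \ad_y(\delta z).
\]
The left side is $\delta(0)=0$, and the first term on the right is $\ad_z(\delta y)=0$ because $\ad_z$ acts as a derivation built from $\ad_z$ on $\l$, which is zero. Hence $\ad_y(\delta z)=0$ for all $y\in\l$, i.e. $\delta z\in(\Lambda^2\l)^\l$, as claimed. This is essentially immediate once the derivation action of $\ad$ on $\Lambda^2\l$ is spelled out.

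\medskip

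\noindent I do not anticipate a serious obstacle; the only point needing mild care is verifying that the descended cobracket $\bar\delta$ in item (1) genuinely satisfies co-Jacobi, which I would handle by the remark in the preliminaries that co-Jacobi is equivalent to $\partial_\delta^2=0$ and observing that $\partial_{\bar\delta}$ is the map induced on $\Lambda^\bullet(\l/[\l,\l])$ by $\partial_\delta$ under the surjection, so $\partial_{\bar\delta}^2=0$ follows.
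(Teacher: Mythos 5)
Your proof is correct and follows essentially the same route as the paper: item (1) rests on the cocycle identity $\delta[x,y]=\ad_x\delta y-\ad_y\delta x$ whose terms visibly lie in $[\l,\l]\w\l$, and item (2) is the same computation $0=\delta([z,x])=\ad_z\delta x+[\delta z,x]=[\delta z,x]$. The extra details you supply about the descended cobracket on $\l/[\l,\l]$ and the induced isomorphism are routine and consistent with what the paper leaves implicit.
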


\begin{proof} 1) It is enough to notice that for any $x,y\in\l$, $\delta [x,y]=\ad _x\delta y-\ad _y\delta x
\in [\l,\l]\w\l$.

2) 
If $z$ is central, then $[z,x]=0$  for all $x\in\l$, so for
 a  1-cocycle $\delta$ we get
\[0=\delta ([z,x])=[z,\delta (x)]+[\delta (z),x]=[\delta (z),x]
\]
Hence, $\ad_x\delta (z)=0$ for all $x\in\l$.
\end{proof}

\begin{coro}
If $\l$ is a Lie bialgebra such that $(\Lambda^2\l)^\l=0$ then $\Z(\l)$ is a coideal.
\end{coro}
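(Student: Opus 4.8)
The plan is to deduce this corollary directly from Proposition \ref{central}. Recall that the corollary asserts: if $\l$ is a Lie bialgebra with $(\Lambda^2\l)^\l=0$, then $\Z(\l)$ is a coideal, \ie $\delta(\Z(\l))\subseteq \Z(\l)\w\l$. Since a coideal only requires $\delta(\Z(\l))\subseteq \Z(\l)\w \l$, and in fact the vanishing hypothesis should force something even stronger, I expect $\delta$ to annihilate $\Z(\l)$ entirely.

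First I would apply part (2) of Proposition \ref{central}, which gives $\delta(\Z(\l))\subseteq (\Lambda^2\l)^\l$ for any $1$-cocycle $\delta$. Then I would simply invoke the hypothesis $(\Lambda^2\l)^\l=0$ to conclude $\delta(\Z(\l))=0$. Since $0\subseteq \Z(\l)\w\l$ trivially, $\Z(\l)$ is a coideal. In fact one gets more: $\Z(\l)$ is contained in the ``co-center'' of the Lie bialgebra, \ie $\delta$ vanishes on it.

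There is essentially no obstacle here — this is a one-line consequence of the proposition combined with the hypothesis. The only point worth stating explicitly is that a subspace $I$ with $\delta(I)=0$ is automatically a coideal, because the definition $\delta(I)\subseteq I\w\l$ is satisfied vacuously. So the proof is: by Proposition \ref{central}(2), $\delta(\Z(\l))\subseteq (\Lambda^2\l)^\l=0$, hence $\delta(\Z(\l))=0\subseteq \Z(\l)\w\l$, which is precisely the statement that $\Z(\l)$ is a coideal. $\hfill\square$
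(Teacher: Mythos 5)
Your proof is correct and is exactly the argument the paper intends: the corollary is stated without proof precisely because it follows immediately from Proposition \ref{central}(2) together with the hypothesis $(\Lambda^2\l)^\l=0$, giving $\delta(\Z(\l))=0\subseteq \Z(\l)\w\l$. Your additional observation that $\delta$ in fact vanishes on the center is a correct strengthening, consistent with how the paper later uses this fact.
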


\subsection*{1-cocycles in product algebras}

Let $\l=\g\times V$, where $\g$ is a Lie algebra over a field $\k$ and
$V$ is a $\k$-vector space, considered as abelian Lie algebra.
The second exterior power of $\l$ can be computed as
\[
\begin{array}{ccccl}
\Lambda^2\l&=&
\Lambda^2(\g\times V)&=&\left(
\Lambda^2\g\ot \Lambda^0 V\right)
\oplus 
\left(\Lambda^1\g \ot \Lambda^1 V \right)
\oplus 
\left(\Lambda^0\g \ot \Lambda^2 V \right)
\\
&&&\cong&
\Lambda^2\g\oplus 
\g \ot V
\oplus 
\Lambda^2 V 
\end{array}\]
Notice 
that this is an $\l$-module decomposition,
so
\[
H^1(\l,\Lambda^2\l)\cong
H^1(\l,\Lambda^2\g)\oplus 
H^1(\l,\g\ot V)
\oplus 
H^1(\l,\Lambda^2 V) 
\]
 Now we recall the 
K\"unneth
formula 
\[
H^1(\g\times V,M_1\ot M_2)\cong
H^1(\g,M_1)\ot  H^0(V, M_2)\oplus H^0(\g,M_1)\ot H^1(V,M_2)
\]
\[\cong
H^1(\g,M_1)\ot  M_2\oplus M_1^\g\ot \Hom(V,M_2)
\]
where we use the equality $H^0(\g,M)=M^\g$ for any $\g$-module $M$. We assume that $M_2$ is a
 trivial representation of $V$ (e.g. $M_2=\k$, $V^\ad$,
or  $\Lambda^2V$),  so $M_2^V=M_2$ and
 $H^1(V,M_2)=\Hom(V,M_2)$. If we applied the K\"unneth formula
in our case, we get
\[
\begin{array}{rcl}
H^1(\l,\Lambda^2\l)&=&
H^1(\g, \Lambda^2\g) \oplus (\Lambda^2\g)^\g\ot V^*
\\
&\oplus&
H^1(\g,\g)\ot V \oplus (\g)^\g\ot \Hom(V, V)
\\
&\oplus&
H^1(\g, \k)\ot \Lambda^2 V \oplus \Hom(V, \Lambda^2 V)
\end{array}
\]
Recalling that $H^1(\g,M)=\Der(\g,M)/\Inder(\g,M)$ and, in
particular,
\[H^1(\g,\k)=\Der(\g,\k)\cong (\g/[\g,\g])^*,\]
 we get the final formula:
\[
\begin{array}{rcl}
H^1(\l,\Lambda^2\l)&=&
H^1(\g, \Lambda^2\g) \oplus (\Lambda^2\g)^\g\ot V^*
\\
&\oplus&
\Der(\g,\g)/\Inder(\g,\g)\ot V \oplus \Z(\g)\ot \End(V)
\\
&\oplus&
(\g/[\g,\g])^*\ot \Lambda^2 V \oplus \Hom(V,\Lambda^2 V).
\end{array}
\]
We have the following special, favorable cases.
\begin{lema}
\label{lema:favorable}
Let $\l=\g\times V$ as before. 
\begin{enumerate}
\item If $\dim V=1$ then
\[
H^1(\l,\Lambda^2\l)\cong
H^1(\g, \Lambda^2\g) \oplus (\Lambda^2\g)^\g
\oplus \Der(\g,\g)/\Inder(\g,\g) \oplus \Z(\g).
\]
\item If $\g$ is semisimple, then $
H^1(\l,\Lambda^2\l)\cong \Hom(V,\Lambda^2 V)
$.
\item If $\dim V=1$ and $\g$ is semisimple, then $H^1(\l,\Lambda^\l)=0$, in particular,
every Lie bialgebra structure on $\l$ is coboundary.
\end{enumerate}
\end{lema}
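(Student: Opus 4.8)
The plan is to read off all three statements from the displayed six-term formula for $H^1(\l,\Lambda^2\l)$ derived above via the K\"unneth formula, by specializing $V$ and $\g$ and using, in addition, Whitehead's first lemma and the elementary structure theory of semisimple Lie algebras. For item (1) I would take $V=\K$, the unique (up to isomorphism) one-dimensional abelian Lie algebra. Then $V^*\cong\K$ and $\End(V)\cong\K$, so the summands $(\Lambda^2\g)^\g\ot V^*$, $\big(\Der(\g,\g)/\Inder(\g,\g)\big)\ot V$ and $\Z(\g)\ot\End(V)$ become $(\Lambda^2\g)^\g$, $\Der(\g,\g)/\Inder(\g,\g)$ and $\Z(\g)$ respectively; and $\Lambda^2V=0$, which annihilates the remaining summands $(\g/[\g,\g])^*\ot\Lambda^2V$ and $\Hom(V,\Lambda^2V)$. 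This is exactly the claimed isomorphism.

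For item (2), with $\g$ semisimple over a field of characteristic zero, I would invoke: Whitehead's first lemma, $H^1(\g,M)=0$ for every finite-dimensional $\g$-module $M$, hence $H^1(\g,\Lambda^2\g)=0$ and $\Der(\g,\g)/\Inder(\g,\g)=H^1(\g,\g)=0$ — the relevant modules $\Lambda^2\g$ and $\g$ being finite-dimensional irrespective of $\dim V$; the identity $(\Lambda^2\g)^\g=0$ recalled in the preliminaries; $\Z(\g)=0$; and $[\g,\g]=\g$, whence $(\g/[\g,\g])^*=0$. These kill the first five summands of the formula, leaving only $\Hom(V,\Lambda^2V)$.

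Item (3) follows by combining the two previous ones: for $\dim V=1$ and $\g$ semisimple, the four summands obtained in (1) are $H^1(\g,\Lambda^2\g)=0$, $(\Lambda^2\g)^\g=0$, $H^1(\g,\g)=0$ and $\Z(\g)=0$, so $H^1(\l,\Lambda^2\l)=0$. The last assertion is then formal: by definition a Lie bialgebra cobracket $\delta$ is a $1$-cocycle of the Chevalley--Eilenberg complex of $\l$ with coefficients in $\Lambda^2\l$, and $H^1(\l,\Lambda^2\l)=0$ forces $\delta=\partial r$ to be a $1$-coboundary for some $r\in\Lambda^2\l$; that is, $(\l,\delta)$ is a coboundary Lie bialgebra.

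No step here is genuinely difficult: everything is bookkeeping on the K\"unneth decomposition once Whitehead's lemma and the semisimple invariant-theory facts are granted. The only points deserving a line of care are that Whitehead's lemma and the vanishing $(\Lambda^2\g)^\g=0$ remain valid when $\K$ is not algebraically closed — which follows by flat base change along $\K\hookrightarrow\overline{\K}$, under which semisimplicity and non-degeneracy of the Killing form are preserved and Lie algebra cohomology commutes with the extension — and, should one wish to allow $\dim V=\infty$ in (2), checking that no infinite-dimensional module ever occupies an $H^1(\g,-)$ slot in the K\"unneth formula, which is the case since those slots carry only $\Lambda^2\g$ and $\g$.
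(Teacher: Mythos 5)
Your proof is correct and follows exactly the route the paper intends: the lemma is stated as an immediate specialization of the six-term K\"unneth decomposition of $H^1(\l,\Lambda^2\l)$ displayed just before it, and you read off each case by setting $\dim V=1$ and/or invoking Whitehead's first lemma, $(\Lambda^2\g)^\g=0$, $\Z(\g)=0$ and $[\g,\g]=\g$ in the semisimple case. The extra remarks on flat base change to $\overline{\K}$ and on the finite-dimensionality of the modules occupying the $H^1(\g,-)$ slots are sound but not needed beyond what the paper already assumes.
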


\begin{ex}
If $\g=\su(2)$ or $\g=\sl(2,\R)$, then every 1-cocycle in $\g\times\R$ is  coboundary.
But this property does not hold for instance in $\sl(2,\R)\times\R^2$, or
$\gl(2,\R)\times \gl(2,\R)$.
\end{ex}

\subsection*{Extensions of scalars}

Let $\K\subset \E$ be a finite field extension, if $\g$ is a Lie (bi)algebra over $\K$, then
$\g\ot_\K \E$ is naturally a Lie (bi)algebra over $\E$ and $\Lambda^2_\E(\g \ot_\K \E)\cong
(\Lambda^2_\K\g )\ot_\K \E$. Let us denote by $H_\K^\bullet(\g,-)$ and 
$H_\E^\bullet(\g\ot_K\E,-)$ the Lie algebra cohomology of $\g$ as $\K$-Lie algebra and
of $\g\ot_\K\E$ as $\E$-Lie algebra, respectively.
 Since Lie cohomology extends scalars, i.e.  if $M$ is a $\g$-module
and we consider $M\ot_\K \E$ as $(\g\ot_K \E)$-module then
$H_\E^\bullet(\g\ot_\K\E,M\ot_\K \E)=
H_\K^\bullet(\g,M)\ot_\K\E$,  we have
$H_\E^\bullet(\g\ot_\K \E,M\ot_\K \E)=0\iff H_\K^\bullet(\g,M)=0$ and
$H_\K^\bullet(\g,M)$ identifies with a $\K$-vector subspace of
$H_\E^\bullet(\g\ot_\K \E,M\ot_\K \E)$. 
In particular,
if $(\g,\delta)$ is a $\R$-Lie bialgebra, then it is coboundary if
and only if its complexification is coboundary.

\section{Biderivations}

For a Lie bialgebra $(\g,\delta)$, a map $D\colon \g\to\g$ which is at the same time a derivation and a coderivation is said a {\em biderivation}.
The set of all biderivations of $(\g,\delta)$ is denoted by $\BiDer(\g)$.
For an  inner biderivation we understand a biderivation which is inner as a derivation.

\begin{defi}Let $(\g,\delta)$ be a Lie bialgebra; we consider the {\em characteristic map} $\D_\g\colon \g\to\g$ defined by
$\D_\g (x):=[\ ,\ ](\delta x)=[x_1,x_2]$ for any $x\in\g$, where we denote 
$\delta x=x_1\w x_2$ in Sweedler type notation. 
\end{defi}
This map contains much information of the Lie bialgebra and it will be useful along this work. 
When it is clear from the context, $\D_\g$ will be denoted by $\D$. Due to the next proposition, we will call $\D_\g$ the {\em characteristic biderivation} of $\g$.

\begin{prop}
\label{propD}
If $(\g,\delta)$ is a Lie bialgebra then its characteristic map $\D$ is both a derivation and a coderivation.
\end{prop}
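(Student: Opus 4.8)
Both halves of the statement come out by feeding the $1$-cocycle identity $\delta[x,y]=\ad_x(\delta y)-\ad_y(\delta x)$ through a natural map. For the derivation property I would use that the bracket, regarded as a linear map $[\ ,\ ]\colon\Lambda^2\g\to\g$, is $\g$-equivariant, i.e. $[\ ,\ ]\circ\ad_x=\ad_x\circ[\ ,\ ]$ on $\Lambda^2\g$; this is exactly the Jacobi identity written as ``$\ad_x$ is a derivation of $[\ ,\ ]$''. Since $\D=[\ ,\ ]\circ\delta$, applying $[\ ,\ ]$ to the cocycle identity gives $\D[x,y]=[\ ,\ ](\ad_x\delta y)-[\ ,\ ](\ad_y\delta x)=\ad_x(\D y)-\ad_y(\D x)=[\D x,y]+[x,\D y]$, which is the derivation property.

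For the coderivation property, recall it asks that $\delta\circ\D=(\D\otimes\id+\id\otimes\D)\circ\delta$, i.e., writing $\delta x=x_1\wedge x_2$ in Sweedler notation, that $\delta(\D x)=\D(x_1)\wedge x_2+x_1\wedge\D(x_2)$. Here I would use co-Jacobi in the form recalled in the introduction, $\partial_\delta^2=0$, in particular $\partial_\delta(\delta x)=0$, where on $\Lambda^2\g$ one has $\partial_\delta(a\wedge b)=\delta(a)\wedge b-a\wedge\delta(b)$. I would also bring in the Chevalley--Eilenberg homological differential $\partial^{\mathrm{CE}}\colon\Lambda^3\g\to\Lambda^2\g$, $\partial^{\mathrm{CE}}(u\wedge v\wedge w)=[u,v]\wedge w-[u,w]\wedge v+[v,w]\wedge u$, whose restriction to $\Lambda^2\g$ is again $[\ ,\ ]$. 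The plan is to compute $\partial^{\mathrm{CE}}\bigl(\partial_\delta(\delta x)\bigr)$ twice. It is $0$ because $\partial_\delta(\delta x)=0$. On the other hand, expanding $\partial_\delta(\delta x)=\delta(x_1)\wedge x_2-x_1\wedge\delta(x_2)$ and using the explicit formula for $\partial^{\mathrm{CE}}$ gives $\partial^{\mathrm{CE}}\bigl(\delta(x_1)\wedge x_2\bigr)=\D(x_1)\wedge x_2+\ad_{x_2}(\delta x_1)$ and $\partial^{\mathrm{CE}}\bigl(x_1\wedge\delta(x_2)\bigr)=\ad_{x_1}(\delta x_2)-x_1\wedge\D(x_2)$, so that $0=\D(x_1)\wedge x_2+x_1\wedge\D(x_2)-\bigl(\ad_{x_1}(\delta x_2)-\ad_{x_2}(\delta x_1)\bigr)$. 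Finally the $1$-cocycle identity applied to $[x_1,x_2]$ identifies $\ad_{x_1}(\delta x_2)-\ad_{x_2}(\delta x_1)=\delta[x_1,x_2]=\delta(\D x)$, and we conclude $\delta(\D x)=\D(x_1)\wedge x_2+x_1\wedge\D(x_2)$.

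The computation is a couple of lines once conventions are fixed, so I expect the only real obstacle to be sign bookkeeping: choosing $\partial^{\mathrm{CE}}$ on $\Lambda^3\g$, $\partial_\delta$ on $\Lambda^2\g$ and the identification $[\ ,\ ]=\partial^{\mathrm{CE}}|_{\Lambda^2\g}$ with mutually compatible signs, and keeping track of which Sweedler leg $\delta$ acts on. One can avoid $\partial^{\mathrm{CE}}$ and instead expand $\delta[x_1,x_2]$ directly in Sweedler notation and reorganize via co-Jacobi and Jacobi, but that route is messier. I would also remark that, in finite dimensions, the two halves are transposes of one another: $\g^*$ is again a Lie bialgebra with bracket $\delta^*$ and cobracket $[\ ,\ ]^*$, its characteristic map is $\D_{\g^*}=\D_\g^*$, and hence ``$\D$ is a coderivation of $(\g,\delta)$'' is dual to ``$\D_{\g^*}$ is a derivation of $(\g^*,\delta^*)$''.
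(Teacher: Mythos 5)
Your proof is correct. The derivation half is essentially the paper's own argument: the paper expands $\D[x,y]=[\ ,\ ](\ad_x\delta y-\ad_y\delta x)$ in Sweedler notation and uses Jacobi to contract $[[x,y_1],y_2]+[y_1,[x,y_2]]$ into $[x,[y_1,y_2]]$, which is precisely your statement that $[\ ,\ ]\colon\Lambda^2\g\to\g$ is $\g$-equivariant. For the coderivation half the two computations use the same two ingredients in reverse order. The paper starts from $\delta(\D x)=\delta([x_1,x_2])$, applies the cocycle identity to produce four terms, and then invokes co-Jacobi twice, once to cancel two of them and once to rewrite the survivors as $(\D\ot\id+\id\ot\D)\delta(x)$. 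You instead start from co-Jacobi in the form $\partial_\delta(\delta x)=0$, push it down with the Chevalley--Eilenberg contraction $\partial^{\mathrm{CE}}\colon\Lambda^3\g\to\Lambda^2\g$, and only then use the cocycle identity to recognize $\ad_{x_1}(\delta x_2)-\ad_{x_2}(\delta x_1)$ as $\delta([x_1,x_2])$. I checked your two intermediate identities $\partial^{\mathrm{CE}}(\delta(x_1)\w x_2)=\D(x_1)\w x_2+\ad_{x_2}(\delta x_1)$ and $\partial^{\mathrm{CE}}(x_1\w\delta(x_2))=\ad_{x_1}(\delta x_2)-x_1\w\D(x_2)$, and the signs come out right with the stated conventions. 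Your packaging is the more structural one: it is in effect the compatibility of the two differentials of the Lecomte--Roger bicomplex, which the paper itself brings up only later (in the subsection on extension of scalars), and it concentrates all sign bookkeeping in the single formula for $\partial^{\mathrm{CE}}$. The paper's version buys the convenience of never introducing $\partial^{\mathrm{CE}}$, at the cost of a hand cancellation among the four Sweedler terms. Your closing remark, that in finite dimensions the coderivation property is the transpose of the derivation property for $\D_{\g^*}=\D_\g^*$, is also made verbatim in the paper as an alternative shortcut.
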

\begin{proof}Let us see that $\D$ is a derivation. If $x,y\in\g$, then
\[
\begin{array}{rcl}
\D ([x,y])&=&[\ ,\ ](\delta [x,y])=[\ ,\ ](\ad _x\delta y-\ad _y\delta x)
\\
&=&[\ , \ ]\left([x,y_1]\w y_2+y_1\w [x,y_2]+[x_1,y]\w x_2+x_1\w [x_2,y]\right)
\\
&=&[[x,y_1], y_2]+[y_1, [x,y_2]]+[[x_1,y], x_2]+[x_1, [x_2,y]]
=[x,[y_1,y_2]]+[[x_1,x_2],y]
\\
&=&[x,\D y]+[\D x,y].
\end{array}
\]
Notice that for a finite dimensional Lie bialgebra $(\g,[\ ,\ ],\delta)$,
 once we know that $\D_\g$ is a derivation in $(\g,[\ ,\ ])$,
 $\D_{\g^*}$ is a derivation in $(\g^*,\delta ^*)$, thus $\D_{\g}$ 
a coderivation in $(\g,\delta )$, since $\D_{\g^*}=(\D_{\g})^*$. 
Alternatively, one may prove it directly:
\begin{eqnarray}
\delta(\D(x))&=&\delta([x_1,x_2])
=[\delta x_1,x_2]+[x_1,\delta x_2]
=[ x_{11}\w x_{12},x_2]+[x_1,x_{21}\w x_{22}]\nonumber
\\
&=&[ x_{11},x_2]\w x_{12}+ x_{11}\w [x_{12},x_2]
+[x_1,x_{21}]\w x_{22}
+x_{21}\w [x_1,x_{22}]\label{d-delta}
\end{eqnarray}
On the other hand, co-Jacobi identity for $\delta$ implies
\[
0=(\delta\ot 1-1\ot \delta)\delta(x)=
x_{11}\w x_{12}\w x_2-
x_1\w x_{21}\w x_{22}
\]
then
$x_{11}\w x_{12}\w x_2=x_1\w x_{21}\w x_{22}
$ and $x_{11}\w x_{2}\w x_{12}=x_1\w x_{22}\w x_{21}$;
hence
\[
[x_{11}, x_{2}]\w x_{12}=[x_1, x_{22}]\w x_{21}
\]
So, the first and the last terms of the four terms 
in  formula (\ref {d-delta}) cancel and we get
\[
\delta(\D(x))=\delta([x_1,x_2])
= x_{11}\w [x_{12},x_2]
+[x_1,x_{21}]\w x_{22};
\]
using co-Jacobi identity again, the last formula equals
\[
= x_{1}\w [x_{21},x_{22}]
+[x_{11},x_{12}]\w x_{2}=x_1\wedge \D(x_2)+\D(x_1)\wedge x_2
=(1\ot \D+\D\ot 1)\delta(x).
\]
\end{proof}
\begin{prop}
\label{hache-erre}
Let $\g$ be a coboundary Lie bialgebra and $r\in\Lambda ^2\g$
such that $\delta(x)=\ad_x(r)$; consider $H_r:=[-,-](r)\in\g$ and $\D_\g$ the characteristic
 biderivation of $\g$, then $\D_\g=-\ad_{H_r}$.
\end{prop}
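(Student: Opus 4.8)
The plan is to compute $\D_\g(x)$ directly from the definition using $\delta(x) = \ad_x(r)$ and then recognize the result as $-\ad_{H_r}(x)$. Write $r = \sum_i a_i \w b_i \in \Lambda^2\g$, so that $H_r = [-,-](r) = \sum_i [a_i,b_i]$. Then $\delta(x) = \ad_x(r) = \sum_i \big([x,a_i]\w b_i + a_i \w [x,b_i]\big)$, and applying the bracket map gives
\[
\D_\g(x) = [-,-](\delta x) = \sum_i \big([[x,a_i],b_i] + [a_i,[x,b_i]]\big).
\]

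The key step is to massage the right-hand side into $-[H_r,x] = -\sum_i [[a_i,b_i],x]$. I would do this by applying the Jacobi identity to each summand: $[[x,a_i],b_i] = [x,[a_i,b_i]] - [a_i,[x,b_i]]$ (using $[[x,a_i],b_i] + [[a_i,b_i],x] + [[b_i,x],a_i] = 0$, i.e. $[[x,a_i],b_i] = -[[a_i,b_i],x] - [[b_i,x],a_i] = -[[a_i,b_i],x] + [a_i,[x,b_i]]$). Substituting this into the sum, the two terms $[a_i,[x,b_i]]$ cancel and we are left with $\D_\g(x) = -\sum_i [[a_i,b_i],x] = -[H_r,x] = -\ad_{H_r}(x)$, as desired. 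Since this holds for all $x\in\g$, we conclude $\D_\g = -\ad_{H_r}$.

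I expect no serious obstacle here: the computation is a one-line application of Jacobi once the cobracket is expanded, and the antisymmetry of $r$ is exactly what makes the two middle terms pair up correctly. The only point requiring a little care is bookkeeping with the Sweedler-type notation $\delta x = x_1 \w x_2$ versus an explicit sum $r = \sum_i a_i \w b_i$; I would simply fix the explicit representation of $r$ at the outset to avoid any ambiguity. It is also worth remarking that this gives a quick independent proof that $\D_\g$ is a biderivation in the coboundary case, consistent with Proposition~\ref{propD}, since $\ad_{H_r}$ is automatically an inner derivation — hence $\D_\g$ is an inner biderivation whenever $\g$ is coboundary.
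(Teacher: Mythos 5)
Your proof is correct and is essentially the paper's own argument: the paper writes $r=r_1\ot r_2$ in Sweedler-type notation and collapses $[[x,r_1],r_2]+[r_1,[x,r_2]]$ to $[x,[r_1,r_2]]=[x,H_r]$ via the derivation property of $\ad_x$ (Jacobi), exactly as you do with the explicit sum $r=\sum_i a_i\w b_i$. The only blemish is a sign slip inside your parenthetical justification, where $-[[b_i,x],a_i]=-[a_i,[x,b_i]]$ rather than $+[a_i,[x,b_i]]$; the displayed identity $[[x,a_i],b_i]=[x,[a_i,b_i]]-[a_i,[x,b_i]]$ that you actually substitute is the correct one, so the cancellation and the conclusion $\D_\g=-\ad_{H_r}$ go through.
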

\begin{proof}
Write $r=r_1\ot r_2$ in Sweedler-type notation, so for any $x\in\g$
\[
\begin{array}{rclcl}
\D _\g(x)=[- ,- ]\circ\delta(x)
&=&[- ,- ](\ad_x(r_1\ot r_2))
&=&[[x,r_1],r_2]+[r_1,[x,r_2]]\\
&=&[x,[r_1,r_2]]
&=&[x,H_r]\\
&=&-\ad_{H_r}(x)
\end{array}
\]
\end{proof}

\begin{prop}
Let $\g$ be a Lie bialgebra and $\D_\g$ its characteristic biderivation. If
$E\in\bider(\g)$ then $[\D,E]=0$.
\end{prop}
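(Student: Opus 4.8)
Since $\g$ is a Lie bialgebra and $E\in\BiDer(\g)$, I want to show $\D_\g E = E\D_\g$ as maps $\g\to\g$. The natural strategy is to unwind both sides on an arbitrary $x\in\g$ using the two defining properties of $E$ — that $E$ is a derivation of $[-,-]$ and a coderivation of $\delta$ — together with the definition $\D_\g(x)=[x_1,x_2]$ in Sweedler notation $\delta x = x_1\wedge x_2$.

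\textbf{Key steps.} First I would compute $\D_\g(E(x))$. Because $E$ is a coderivation, $\delta(E(x)) = (E\otimes\id + \id\otimes E)\delta(x) = E(x_1)\wedge x_2 + x_1\wedge E(x_2)$. Applying the bracket map $[-,-]$ gives
\[
\D_\g(E(x)) = [E(x_1),x_2] + [x_1,E(x_2)].
\]
Next I would compute $E(\D_\g(x)) = E([x_1,x_2])$. Because $E$ is a derivation of the bracket,
\[
E([x_1,x_2]) = [E(x_1),x_2] + [x_1,E(x_2)].
\]
The two expressions coincide, hence $[\D,E]=\D_\g E - E\D_\g = 0$. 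So the proof is essentially a two-line symmetric matching: the coderivation property of $E$ pushes $E$ inside $\delta$, the bracket map collapses it to $\D_\g$, and on the other side the derivation property of $E$ pushes $E$ inside the bracket — the same two terms appear.

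\textbf{Where the care is needed.} There is no real obstacle here; the only subtlety is purely notational bookkeeping with the Sweedler-type summation $\delta x = x_1\wedge x_2$ (an implicit sum) and making sure the skew-symmetrization conventions are handled consistently — i.e. that $[-,-]$ applied to $a\wedge b$ means $[a,b]$, which is well defined since $[-,-]$ is itself antisymmetric. One should note that the identity $[\D,E]=0$ holds for \emph{any} biderivation $E$, inner or not, and in particular does not require $\g$ to be finite-dimensional or coboundary; it is a direct consequence of Proposition~\ref{propD} (that $\D$ is both a derivation and a coderivation) combined with the two properties defining $E$. Alternatively, one could phrase it as: the commutator of two coderivations is a coderivation and the commutator of two derivations is a derivation, so $[\D,E]\in\BiDer(\g)$; but the explicit computation above is shorter and self-contained.
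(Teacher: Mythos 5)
Your proof is correct and is essentially identical to the paper's: both use the coderivation identity $\delta E=(E\otimes\id+\id\otimes E)\delta$ and the derivation identity $[-,-](E\otimes\id+\id\otimes E)=E[-,-]$ to rewrite $\D_\g E=[-,-]\delta E$ as $E[-,-]\delta=E\D_\g$; you merely write the same chain element-wise in Sweedler notation where the paper composes operators. Nothing is missing.
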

\begin{proof}
The definition of coderivation says that $E$ satisfies
$
(E\ot \id+\id\ot E)\delta =\delta E$;
on the other hand, since $E$ is a derivation, $E[x,y]=[Ex,y]+[x,Ey]$, in other words,
\[
[-,-](E\ot \id+\id\ot E)= E[-,-]\]
Both properties together imply
\[
\D_\g E=
[-,-]\delta E=
[-,-](E\ot \id+\id\ot E)\delta=
E[-,-]\delta=
E\D_\g\]
\end{proof}
\begin{coro}
\label{corobider}
Let $\g$ be a  Lie bialgebra such that $\D_\g$  is an inner biderivation; write $\D_\g=\ad_{H_0}$ for some $H_0\in\g$.
\begin{enumerate}
\item If $E$ is a biderivation, then $E(H_0)\in\Z\g$.
\item
If $E=\ad_{x}\in\bider$, then 
$[x,H_0]\in\Z\g$; in particular, if $\Z\g=0$ then $x$
commutes with $H_0$.
\end{enumerate}
\end{coro}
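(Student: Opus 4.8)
The proof will be a short, direct consequence of the preceding proposition together with the Leibniz rule, so I do not expect any genuine obstacle; the entire content sits in the commutation relation $[\D_\g,E]=0$ already established. The plan is as follows.

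First, recall from the previous proposition that every biderivation $E$ satisfies $[\D_\g,E]=0$, i.e. $\D_\g\circ E=E\circ\D_\g$ as endomorphisms of $\g$. Using the hypothesis $\D_\g=\ad_{H_0}$, this identity evaluated on an arbitrary $x\in\g$ reads
\[
[H_0,E(x)]=E([H_0,x]).
\]
For part (1), expand the right-hand side using that $E$ is a derivation: $E([H_0,x])=[E(H_0),x]+[H_0,E(x)]$. Substituting and cancelling the common term $[H_0,E(x)]$ leaves $[E(H_0),x]=0$ for all $x\in\g$, that is, $E(H_0)\in\Z\g$.

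For part (2), apply part (1) to $E=\ad_x$; since then $E(H_0)=[x,H_0]$, we immediately get $[x,H_0]\in\Z\g$. If moreover $\Z\g=0$, this forces $[x,H_0]=0$, so $x$ commutes with $H_0$. The only point requiring a moment's attention is keeping track of sign conventions for $\ad$ and for $\D_\g$ (compare Proposition \ref{hache-erre}, where $\D_\g=-\ad_{H_r}$ in the coboundary case), but since both conclusions only assert membership in the center such signs are irrelevant to the argument.
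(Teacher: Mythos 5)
Your proposal is correct and follows exactly the paper's own argument: apply $[\D_\g,E]=0$ with $\D_\g=\ad_{H_0}$, expand $E([H_0,x])$ by the Leibniz rule, cancel $[H_0,E(x)]$, and conclude $[E(H_0),x]=0$ for all $x$, i.e.\ $E(H_0)\in\Z\g$, with part (2) as the specialization $E=\ad_x$. Nothing to add.
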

\begin{proof}
1. We know $[E,\D_\g]=0$, then for any $x\in\g$, 
\[
\begin{array}{rcl}
0&=&[E,\D_\g](x)
=[E,\ad_{H_0}](x)=E(\ad_{H_0}(x))-\ad_{H_0}(E(x))\\
&=&E([H_0,x])-[H_0,E(x)]\\
&=&
[E(H_0),x]+[H_0,E(x)]-[H_0,E(x)]
\\
&=&[E(H_0),x]
=\ad_{E(H_0)}(x)
\end{array}
\]
hence $E(H_0)\in\Ker(\ad)=\Z\g$.
The second statement is a direct consequence of the first.
\end{proof}

We quote a result from \cite{AJ}, which together with the previous corollary
implies a very interesting fact.

\begin{prop}\label{centr=cartan}
If $\g$ is semisimple and $(\g, r)$ is an (almost) factorizable Lie bialgebra, 
then $H_r :=[-,-](r)$ is a regular element and so 
$\h :=\Z_\g(H_r)$, the centralizer of $H_r$, is a Cartan subalgebra of $\g$.
\end{prop}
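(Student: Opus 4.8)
The statement asserts that for a semisimple Lie algebra $\g$ and an (almost) factorizable Lie bialgebra $(\g,r)$, the element $H_r=[-,-](r)$ is regular, hence its centralizer $\h=\Z_\g(H_r)$ is a Cartan subalgebra. Since the claim is attributed to \cite{AJ}, I would recover it from the explicit Belavin--Drinfeld normal form, Theorem~\ref{BD-teo}. The plan is to reduce first to the complex factorizable case: if $(\g,\delta)$ is real and almost factorizable, then $(\g\ot_\R\C,\delta\ot_\R\id)$ is factorizable, and $H_{r\ot 1}=H_r\ot 1$; since regularity of an element of $\g$ is equivalent to regularity of its image in $\g\ot_\R\C$ (the characteristic polynomial of $\ad$ is unchanged under base change), it suffices to treat $\g$ complex simple (and then semisimple by taking direct sums, noting $H_r$ decomposes accordingly).

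For $\g$ complex simple and factorizable, Theorem~\ref{BD-teo} gives a Cartan $\h$, simple roots $\Delta$, a BD-triple $(\Gamma_1,\Gamma_2,\tau)$ and continuous parameter $r_0$ with
\[
r=r_0+\sum_{\alpha\in\Phi^+}x_{-\alpha}\ot x_\alpha+\sum_{\alpha\prec\beta}x_{-\alpha}\w x_\beta .
\]
Next I would compute $H_r=[-,-](r)$ term by term. The Cartan part $r_0\in\h\ot\h$ is symmetric modulo $\Lambda^2\h$, and $[-,-]$ kills the symmetric part; the skew part $\lambda\in\Lambda^2\h$ also gives $[-,-](\lambda)=0$ since $\h$ is abelian, so $r_0$ contributes nothing. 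Each $x_{-\alpha}\ot x_\alpha$ contributes $[x_{-\alpha},x_\alpha]=(x_\alpha,x_{-\alpha})t_\alpha=t_\alpha$, the coroot-type element dual to $\alpha$; summing over $\Phi^+$ gives $\sum_{\alpha\in\Phi^+}t_\alpha$, which is (twice) the element usually written $t_\rho$ where $\rho$ is half the sum of the positive roots — in particular a \emph{strictly dominant regular} element of $\h$. Each extra term $x_{-\alpha}\w x_\beta$ with $\alpha\prec\beta$, $\alpha\ne\beta$, contributes $[x_{-\alpha},x_\beta]$, which lies in the root space $\g_{\beta-\alpha}$ (or is zero), hence has \emph{no $\h$-component}. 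Therefore the $\h$-component of $H_r$ is exactly $\sum_{\alpha\in\Phi^+}t_\alpha=2t_\rho$, a regular element of $\h$, while the rest of $H_r$ is a sum of root vectors for roots of the form $\beta-\alpha$ with $\alpha\prec\beta$.

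The remaining point — the main obstacle — is to promote ``the semisimple part (or $\h$-component) of $H_r$ is regular'' to ``$H_r$ itself is regular.'' One clean way: by Proposition~\ref{hache-erre}, $\D_\g=-\ad_{H_r}$, and by the construction of the BD Lie bialgebra $\delta=\partial r$ one checks $\D_\g$ is in fact $\ad$ of a semisimple element — concretely, one verifies that the nilpotent-looking corrections $\sum_{\alpha\prec\beta}[x_{-\alpha},x_\beta]$ assemble, together with $2t_\rho$, into an element conjugate to $2t_\rho$ under the inner automorphism $\exp(\mathrm{ad}\,n)$ for a suitable $n$ in the nilradical spanned by the relevant positive root vectors; this uses the nilpotency condition in the BD-triple (for each $\alpha$ the chain $\alpha,\tau\alpha,\tau^2\alpha,\dots$ terminates) to see that the correction is ``triangular'' with respect to the $\prec$-grading and can be absorbed. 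Since regularity is conjugation-invariant and $2t_\rho$ is regular, $H_r$ is regular. Alternatively, and perhaps more economically, one invokes Jordan decomposition $H_r=s+n$ with $[s,n]=0$: the $\h$-component computation shows the semisimple part $s$ has the regular element $2t_\rho$ as its $\h$-component up to the action of the nilpotent part, and a short argument with the characteristic polynomial of $\ad_{H_r}=\ad_s$ (whose nonzero generalized eigenvalues are the $\langle\text{root},2t_\rho\rangle$, all nonzero) gives $\dim\Z_\g(H_r)=\dim\Z_\g(s)=\mathrm{rk}\,\g$. Once regularity is established, $\Z_\g(H_r)$ is a Cartan subalgebra by the standard characterization of Cartan subalgebras of semisimple Lie algebras as centralizers of regular elements, completing the proof.
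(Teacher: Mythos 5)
The paper does not actually reprove this proposition --- it cites \cite{AJ} for the simple case and notes the argument extends --- so reconstructing a proof from the Belavin--Drinfeld normal form, as you do, is the right strategy, and your reduction to the complex case and your computation of the $\h$-component of $H_r$ as (up to sign) $\sum_{\alpha\in\Phi^+}t_\alpha=t_{2\rho}$, a regular element of $\h$, are correct. The genuine gap is precisely the step you flag as ``the main obstacle'': neither of your two proposed ways of promoting regularity of the $\h$-component to regularity of $H_r$ works as written. The conjugation argument asserts, without proof, that the corrections $[x_{-\alpha},x_\beta]$ can be absorbed by $\exp(\ad_n)$ with $n$ in a nilradical; but these terms would lie in root spaces $\g_{\beta-\alpha}$ with $\hh(\beta-\alpha)=0$, which are not contained in any nilradical compatible with the decomposition, so there is no triangularity to exploit. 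The Jordan-decomposition alternative writes $\ad_{H_r}=\ad_s$, which is false unless the nilpotent part vanishes, and the assertion that the generalized eigenvalues of $\ad_{H_r}$ are the values $\langle\gamma,2t_\rho\rangle$ is exactly the claim that needs proof.

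The obstacle is in fact illusory, and the observation that removes it appears in the paper in the discussion preceding Lemma \ref{lema:height}: the BD conditions force $\hh(\alpha)=\hh(\beta)$ whenever $\alpha\prec\beta$. Since $\alpha\neq\beta$ (by the nilpotency condition on $\tau$), $\beta-\alpha$ is a \emph{nonzero} element of the root lattice of height zero; as every root has all coefficients of one sign and hence nonzero height, $\beta-\alpha$ is not a root, so $\g_{\beta-\alpha}=0$ and $[x_{-\alpha},x_\beta]=0$. Consequently $H_r=-\sum_{\alpha\in\Phi^+}t_\alpha\in\h$ exactly (the continuous parameter contributes nothing because $\h$ is abelian and $[-,-]$ annihilates symmetric tensors), and regularity follows from $(2\rho,\gamma)\neq0$ for every root $\gamma$; this is consistent with the formula $H_r=\sum_{\alpha\in\Phi^+}H_\alpha$ used later in the Cremmer--Gervais and $\sl(4)$ examples. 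A further caveat: your reduction of the semisimple case to the simple one ``by taking direct sums'' is too quick, since an r-matrix on a product of simple ideals need not split as a sum of r-matrices on the factors; one must instead invoke the semisimple/reductive version of the Belavin--Drinfeld theorem (as in \cite{H} and \cite{Del}), after which the same height computation applies verbatim.
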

\begin{proof}
 This statement is proved for  both, real and complex,  simple cases  in \cite{AJ}, but
 the proof remains valid {\em mutatis mutandis}
for the semisimple case.
\end{proof}

\begin{coro}\label{bider-semi}
Any biderivation of a  factorizable semisimple Lie bialgebra $(\g, r)$ is of the form  $\ad_H$ with $H\in \h=\Z_\g(H_r)$.
In particular,  $\bider(\g,\delta)$ is an abelian Lie algebra.
\end{coro}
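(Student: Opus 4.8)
The plan is to combine Corollary~\ref{corobider} with Proposition~\ref{centr=cartan}. By Proposition~\ref{hache-erre}, since $(\g,r)$ is coboundary (factorizability implies quasi-triangular, hence coboundary), the characteristic biderivation is $\D_\g=-\ad_{H_r}$, so it is inner; write $H_0:=-H_r$. Now let $E\in\bider(\g,\delta)$ be arbitrary. Since $\g$ is semisimple, $\Z\g=0$, so part~1 of Corollary~\ref{corobider} gives $E(H_0)=0$, i.e. $E(H_r)=0$.

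Next I would show that $E$ itself is inner. Because $\g$ is semisimple, every derivation of $\g$ is inner, so $E=\ad_x$ for a unique $x\in\g$. Applying part~2 of Corollary~\ref{corobider} (again using $\Z\g=0$) gives $[x,H_r]=0$, that is, $x\in\Z_\g(H_r)=:\h$. By Proposition~\ref{centr=cartan}, $H_r$ is a regular element, so $\h$ is a Cartan subalgebra of $\g$; hence $E=\ad_x$ with $x\in\h$, as claimed.

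Finally, for the abelianness of $\bider(\g,\delta)$: every biderivation is of the form $\ad_H$ with $H\in\h$, and $\h$ is abelian (being a Cartan subalgebra), so $[\ad_{H},\ad_{H'}]=\ad_{[H,H']}=0$ for $H,H'\in\h$. Thus $\bider(\g,\delta)$ is an abelian Lie algebra.

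I do not expect a serious obstacle here, since all the work has been done in the preceding propositions; the only point requiring minor care is making explicit that factorizable $\Rightarrow$ quasi-triangular $\Rightarrow$ coboundary, so that Proposition~\ref{hache-erre} applies and $\D_\g$ is inner — this is exactly what licenses the use of Corollary~\ref{corobider}. One should also note that the uniqueness of $x$ with $E=\ad_x$ (again from $\Z\g=0$) is what makes the identification $\bider(\g,\delta)\cong\{\ad_H:H\in\h\}$ clean, though uniqueness is not strictly needed for the stated conclusion.
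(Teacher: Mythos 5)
Your argument is correct and follows essentially the same route as the paper: semisimplicity gives $E=\ad_x$ and $\Z\g=0$, Proposition~\ref{hache-erre} identifies $\D_\g=-\ad_{H_r}$ so Corollary~\ref{corobider} yields $[x,H_r]=0$, and Proposition~\ref{centr=cartan} makes $\Z_\g(H_r)$ a Cartan subalgebra, hence abelian. The only difference is your (harmless, unneeded) extra observation that $E(H_r)=0$ via part~1 of Corollary~\ref{corobider}.
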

\begin{proof}
If $\g$ is semisimple then every derivation is inner and
$\Z\g=0$, so $E=\ad_{x_0}$ and $x_0$ commutes with $H_r$. In particular, $x_0$ belongs
to the centralizer of $H_r$.
\end{proof}

Another characterization of inner biderivations is the following.

\begin{prop} 
\label{propbider}
Let $(\g,\delta)$ be a Lie bialgebra and $D=\ad _{x_0}$ an inner derivation, then $D$ is a coderivation if and only if $\delta x_0\in(\Lambda ^2\g)^{\g}$. 
In particular, if $(\Lambda ^2\g)^{\g}=0$,  then 
the map $x_0\mapsto \ad_{x_0}$ induces an isomorphism of Lie algebras
$\Ker \delta /\left(\Z(\g)\cap \Ker \delta\right)\cong\InDer(\g)\cap\CoDer(\g)$.
\end{prop}

\begin{proof}
 By definition, $D$ is a coderivation if and only if
$(D\ot\id +\id\ot D)\circ\delta =\delta \circ D$. 
Since $D=\ad _{x_0}$, we have $(D\ot\id +\id\ot D)(x\ot y)=\ad_{x_0}(x\ot y)$.
So, the coderivation condition reads
\[
\delta[x_0,z]=\ad_{x_0}\delta (z)
\]
for all $z\in\g$. On the other hand, $\delta$ is a 1-cocycle, namely
\[
\delta[x_0,z]=\ad_{x_0}\delta (z)-\ad_z\delta ({x_0})
\]
Hence, $D$ is a coderivation if and only if $\ad_z\delta ({x_0})=0$ 
for all $z\in\g$.
\end{proof}

\begin{coro}
\label{coro:bider}
Let $(\g,\delta)$ be a Lie bialgebra such that every derivation is inner, $\Z\g=0$ and $(\Lambda ^2\g)^{\g}=0$,
then $\BiDer(\g,\delta)\cong\Ker \delta$. 
In particular, if $\g$ is semisimple then the result holds.
\end{coro}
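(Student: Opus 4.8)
The plan is to combine Proposition \ref{propbider} with the hypotheses on $\g$. Since $\g$ is assumed to have $\Z\g = 0$ and $(\Lambda^2\g)^\g = 0$, Proposition \ref{propbider} already gives an isomorphism of Lie algebras $\Ker\delta/(\Z(\g)\cap\Ker\delta) \cong \InDer(\g)\cap\CoDer(\g)$, and because $\Z\g = 0$ the denominator vanishes, so this simplifies to $\Ker\delta \cong \InDer(\g)\cap\CoDer(\g)$ via $x_0\mapsto\ad_{x_0}$.

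Next I would observe that the additional hypothesis ``every derivation is inner'' means $\Der(\g) = \InDer(\g)$, so that $\BiDer(\g,\delta) = \Der(\g)\cap\CoDer(\g) = \InDer(\g)\cap\CoDer(\g)$. Chaining this with the isomorphism from the previous step yields $\BiDer(\g,\delta)\cong\Ker\delta$, as desired. For the ``in particular'' clause, I would recall the standard facts from the structure theory of semisimple Lie algebras over a field of characteristic zero: if $\g$ is semisimple then $\Z\g = 0$, every derivation is inner (Whitehead's lemma, $H^1(\g,\g) = 0$), and $(\Lambda^2\g)^\g = 0$ (this is noted already in the introduction of the excerpt). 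Hence all three hypotheses are satisfied and the conclusion applies.

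There is essentially no obstacle here: the corollary is a direct specialization of Proposition \ref{propbider} once one notes that $D = \ad_{x_0}$ being a coderivation is, under $(\Lambda^2\g)^\g = 0$, equivalent to $\delta x_0 = 0$, i.e.\ $x_0\in\Ker\delta$. The only point requiring a word of care is that the correspondence $x_0\mapsto\ad_{x_0}$ is injective precisely because $\Z\g = 0$, which is exactly why that hypothesis is needed to get an honest isomorphism rather than merely a surjection; but this is already handled inside the proof of Proposition \ref{propbider}. I would therefore write the proof as two short lines invoking the previous proposition and the vanishing of $\Z\g$, plus a sentence citing Whitehead's lemma and $(\Lambda^2\g)^\g=0$ for the semisimple case.
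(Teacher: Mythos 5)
Your proof is correct and is essentially the argument the paper intends: the corollary is stated without proof precisely because it follows at once from Proposition \ref{propbider} by noting that $\Z\g=0$ kills the quotient and that $\Der(\g)=\InDer(\g)$ identifies $\BiDer(\g,\delta)$ with $\InDer(\g)\cap\CoDer(\g)$, with the semisimple case covered by Whitehead's lemma and the standard vanishing of $\Z\g$ and $(\Lambda^2\g)^\g$.
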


\begin{ex}\label{ex:aff2}
The  non-commutative two dimensional Lie algebra 
$\g=\aff_2(\K)$, verifies $\Der(\g)=\Inder(\g)$, $\Z(\g)=0$ and $(\Lambda ^2\g)^{\g}=0$ but it is not semisimple.
In fact, this is the
``$\sl_2$-case'' of the general and classical result (see for instance \cite{LL}) that a Borel subalgebra $\b$ of a semisimple Lie algebra
 satisfies $\Der(\b)=\Inder(\b)$, $\Z(\b)=0$ and $(\Lambda^2\b)^\b=0$.
\end{ex}

\subsection*{Biderivations in the semisimple case}

Let $(\g,r)$ an (almost) factorizable semisimple 
Lie bialgebra, with $r$ a BD classical r-matrix  \ie $r$ of the
 form as in equation \eqref{BD-r-matrix} of theorem \ref{BD-teo}, that is, for
a fixed non-degenerate, symmetric, invariant, bilinear form on $\g$,
 a certain Cartan subalgebra $\h$, an election of positive and simple roots
$\Phi^+\subset\Phi(\h)$ and $\Delta=\{\alpha_1,\cdots,\alpha_\ell\}$, respectively, a pair of discrete and continuous parameters $(\Gamma_1,\Gamma_2,\tau)$ and $r_0\in\h\ot\h$, respectively, with $r_0=\lambda +\Omega_0$, $\lambda\in\Lambda^2\h$,
$\lambda=\sum\limits_{1\leq i<j\leq \el}\lambda_{ij}h_i\wedge h_j$, where
$h_i:=h_{\alpha_i}$,
the antisymmetric component $r_\Lambda$ of such an r-matrix is of the form
\begin{eqnarray}
\label{rlambda}
r_\Lambda=\sum_{\alpha\in\Phi^+}x_{-\alpha}\w x_\alpha
+\sum_{\alpha\prec\beta}x_{-\alpha}\w x_\beta
+\lambda
\end{eqnarray}
Here $\ell =\dim\h$ is the rank of the Lie algebra $\g$. 
Notice that $\ad _{x}r=\ad _{x}r_\Lambda $
since the symmetric component of $r$ is $\g$-invariant.
 If we are in the real almost factorizable case $(\g,\delta)$, namely
$(\g\ot_\R\C,\delta\ot_\R\C)$ is factorizable, so there exists
$r\in(\g\ot_\R\C)\ot_\C(\g\ot_\R\C)=(\g\ot_\R\g)\ot_\R\C$
with $\delta\ot_\R\id_\C(x)=\ad_x(r)$ for all $x\in\g\ot_\R\C$;
suppose that $r$ is of the form as above, then necessarily 
$\delta(x)=\ad_x(r)=\ad_x(r_{\Lambda})$, for all $x\in\g$. In particular
$r_{\Lambda}\in\Lambda^2_\R\g$.

The goal of this section is to prove the result given in the following theorem. In fact, we exhibit two different proofs of it, namely, one is the aplication of corollary \ref{bider-semi}, which gives proposition \ref{prop-n+}. 
The second is longer but direct and follows in this section, being proposition \ref{prop-n+} 
 the most subtle part of the proof.

\begin{teo}
\label{teo:main}
Let $(\g,r)$ be an (almost) factorizable semisimple Lie bialgebra, with $r$ as
 in the previous paragraph.  If
$D:\g\to \g$ is a biderivation, then $D=\ad_{H}$ for a (unique)
$H\in\h$ satisfying
\[
\alpha(H)=(\tau\alpha)(H)
\hbox {,\quad for all }\alpha\in\Gamma_1
\]
In particular, if the discrete parameter  is empty,
that any $H\in\h$ determines a biderivation and all biderivations are of this type.
\end{teo}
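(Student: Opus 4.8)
The plan is to deduce the theorem from Corollary~\ref{bider-semi}, which already tells us that any biderivation $D$ of $(\g,r)$ must be of the form $\ad_H$ with $H\in\h=\Z_\g(H_r)$; so the only thing left to prove is \emph{which} $H\in\h$ actually give coderivations, i.e.\ to show that $\ad_H$ is a coderivation if and only if $\alpha(H)=(\tau\alpha)(H)$ for all $\alpha\in\Gamma_1$. By Proposition~\ref{propbider}, since $(\Lambda^2\g)^\g=0$ in the semisimple case, $\ad_H$ is a coderivation precisely when $\delta H=0$, i.e.\ when $\ad_H(r_\Lambda)=0$; and because the symmetric part of $r$ is $\g$-invariant, $\ad_H(r)=\ad_H(r_\Lambda)$, so the condition is simply $\ad_H(r)=0$. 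So the whole theorem reduces to the computation: for $H\in\h$, when is $\ad_H(r)=0$, with $r$ the explicit Belavin--Drinfeld matrix of \eqref{BD-r-matrix}?

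The key step is therefore to apply $\ad_H\otimes\id+\id\otimes\ad_H$ termwise to the three pieces of $r=r_0+\sum_{\alpha\in\Phi^+}x_{-\alpha}\otimes x_\alpha+\sum_{\alpha\prec\beta}x_{-\alpha}\wedge x_\beta$ and see which constraints on $H$ make the sum vanish. First, $r_0\in\h\otimes\h$ and $H\in\h$, so $\ad_H$ kills $r_0$ outright. Second, $\ad_H(x_{-\alpha}\otimes x_\alpha)=(-\alpha(H)+\alpha(H))x_{-\alpha}\otimes x_\alpha=0$ for every $\alpha\in\Phi^+$, so the "Cartan--matrix" part of the standard component also contributes nothing for \emph{every} $H\in\h$. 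Third, for a pair $\alpha\prec\beta$ (so $\beta=\tau^n\alpha$ for some $n\ge 1$) one has $\ad_H(x_{-\alpha}\wedge x_\beta)=(-\alpha(H)+\beta(H))\,x_{-\alpha}\wedge x_\beta$. Since the root vectors $x_{-\alpha}\wedge x_\beta$ over distinct pairs $(\alpha,\beta)$ with $\alpha\prec\beta$ are linearly independent in $\Lambda^2\g$ (weight spaces for weight $\beta-\alpha$, and the pairs are distinct), $\ad_H(r)=0$ forces $\beta(H)=\alpha(H)$ for all such pairs. Then I would argue that the family of conditions $\{(\tau^n\alpha)(H)=\alpha(H): \alpha\prec\tau^n\alpha\}$ is equivalent to the seemingly weaker family $\{(\tau\alpha)(H)=\alpha(H):\alpha\in\Gamma_1\}$: the latter clearly implies the former by telescoping along $\alpha,\tau\alpha,\tau^2\alpha,\dots$, and conversely taking $n=1$ (valid whenever $\tau\alpha\notin\Gamma_1$, and for $\alpha$ with $\tau\alpha\in\Gamma_1$ the relation $(\tau\alpha)(H)=\alpha(H)$ follows from the chain condition on $\tau$ together with the cases already covered) recovers all of $\Gamma_1$. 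The uniqueness of $H$ is immediate because $\ad$ is injective on a semisimple Lie algebra ($\Z(\g)=0$), so $\ad_H=\ad_{H'}$ with $H,H'\in\h$ forces $H=H'$.

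The main obstacle I expect is purely bookkeeping: verifying that $x_{-\alpha}\wedge x_\beta$ for distinct admissible pairs $(\alpha,\beta)$ are genuinely linearly independent (so that no cancellation among the $\prec$-terms can save a bad $H$), and then carefully checking the equivalence of the two systems of linear conditions on $H$ using the nilpotency/chain axiom of the BD-triple — in particular handling the case where $\tau\alpha$ itself lies in $\Gamma_1$, where the single-step condition $(\tau\alpha)(H)=\alpha(H)$ is not literally one of the $\alpha\prec\beta$ constraints but must be derived. In the real almost-factorizable case one runs the same computation over $\g\otimes_\R\C$, using that $r_\Lambda\in\Lambda^2_\R\g$ and that $\delta(x)=\ad_x(r_\Lambda)$ for $x\in\g$, and then observes that the conditions $\alpha(H)=(\tau\alpha)(H)$ cut out a real subspace of the real Cartan since the $\alpha$'s take real values on it; nothing new happens. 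Finally, the last sentence of the theorem (empty discrete parameter $\Rightarrow$ every $H\in\h$ works) is then the degenerate case $\Gamma_1=\emptyset$ of the displayed condition, which is vacuous, combined with Corollary~\ref{bider-semi} for the converse inclusion.
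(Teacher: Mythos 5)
Your proposal is correct and coincides with one of the paper's own two proofs: the authors state explicitly (in the remark following the theorem) that Corollary~\ref{bider-semi} together with the final weight computation $\ad_H(r)=\sum_{\alpha\prec\beta}(\beta(H)-\alpha(H))\,x_{-\alpha}\wedge x_\beta$ for $H\in\h$ yields the result, which is precisely your argument via Proposition~\ref{propbider} and the termwise action of $\ad_H$ on the Belavin--Drinfeld $r$-matrix. The paper's primary written-out proof is instead the longer direct one through Proposition~\ref{prop-n+} (height decomposition and nonvanishing of the elements $H^\gamma_\lambda$), which avoids relying on the regularity of $H_r$ quoted from \cite{AJ}, but your shorter route is fully valid given the results already established in that section.
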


It is usefull to recall the notion of {\em level} or {\em height} of a root; if $\alpha =\sum_{i=1}^\ell n_i\alpha_i$ define the height of $\alpha$
as the integer
\[
\hh(\alpha)=\sum_{i=1}^\ell n_i
\]
The same definition can be extended for any weight $\mu\in\h^*$; in this more general context, the height is a complex number.
The adjoint representation of $\g$ decomposes in {\em height spaces}, explicitly given by 
$$\g_{(n)}:=\underset{\hh(\alpha)=n}{\bigoplus\limits_{\alpha\in\Phi\cup\{0\}:}
\g_\alpha}$$ where we include $\g_{(0)}=\g_0=\h$, so
$\g=\bigoplus\limits_{n\in\ZZ}\g_{(n)}$.
Analogously, the representation $\Lambda ^2\g$ decomposes in weight spaces $(\Lambda ^2\g)_\mu$,
with weights of integer levels.
For each $\mu$ in the $\ZZ$-span of $\Phi$, we have
$$(\Lambda ^2\g)_\mu=\bigoplus\limits_{\alpha,\beta\in\Phi\cup\{0\}:\ \alpha+\beta=\mu}(\g_\alpha\w\g_\beta)$$
This decomposition can be rearranged
as a decomposition in height spaces
as follows:
\[
\Lambda ^2\g=\bigoplus_{\mu\in\h^*}(\Lambda ^2\g)_\mu
=\bigoplus_{n\in\ZZ}\left(\bigoplus_{\mu\in\h^*:\ \hh(\mu)=n}(\Lambda ^2\g)_\mu
\right)
=\bigoplus_{n\in\ZZ}(\Lambda ^2\g)_{(n)}
\]
where
$(\Lambda ^2\g)_{(n)}:=\bigoplus\limits_{\mu\in\h^*:\ \hh(\mu)=n}(\Lambda ^2\g)_\mu
$ is said the {\em component of height} $n$.
Notice that $\Lambda ^2\h\subset(\Lambda ^2\g)_{(0)}$ but also $x_{-\alpha}\w x_\alpha\in(\Lambda ^2\g)_{(0)}$;
moreover, conditions on the BD-triple (see \cite{BD} and \cite{AJ}) force that
$\alpha\prec\beta$ implies $\hh(\alpha)=\hh(\beta)$ and so,
$x_{-\alpha}\w x_\beta\in(\Lambda ^2\g)_{(0)}$,
then, all the terms which appear in $r_\Lambda$ are in $(\Lambda ^2\g)_{(0)}$.
Hence, 
\[
\fbox{$r_\Lambda\in(\Lambda ^2\g)_{(0)}$}
\]
Also, 
from $[\g_\mu,\Lambda^2\g_\nu]\subseteq \Lambda^2\g_{\mu+\nu}$, it is clear that
$[\g_{(n)},(\Lambda ^2\g)_{(k)}]\subseteq(\Lambda ^2\g)_{(n+k)}$.
This discussion implies the following lemma.
\begin{lema}
\label{lema:height}Let $\g=\n_+\oplus\h\oplus\n_-$ be the triangular descomposition related to $\h$ and a to an election of positive roots; if we write $x=x_++x_\h+x_-$
then
$\ad_ x(r)=0$ if and only if
$\ad _{x_{\pm}}(r)=\ad_{x_{\h}}(r)=0$.
Moreover, if $x=\sum_nx_{(n)}$ with $x_{(n)}\in\g_{(n)}$
corresponding to the decomposition of $\g$ in height spaces, then
$\delta x=\ad_x(r)=0$ if and only if
$\delta x_{(n)}=\ad_{x_{(n)}}(r)=0$ for all $n\in\ZZ$.
\end{lema}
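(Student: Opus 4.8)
The plan is to exploit the fact that $r=r_\Lambda$ (up to a $\g$-invariant symmetric part) lives in the height-zero component $(\Lambda^2\g)_{(0)}$, together with the graded behavior of the bracket, $[\g_{(n)},(\Lambda^2\g)_{(k)}]\subseteq(\Lambda^2\g)_{(n+k)}$, which has just been established. First I would reduce $\delta x=\ad_x(r)=0$ to a statement componentwise in height: writing $x=\sum_n x_{(n)}$ with $x_{(n)}\in\g_{(n)}$, the element $\ad_{x_{(n)}}(r)$ lies in $(\Lambda^2\g)_{(n)}$ since $r\in(\Lambda^2\g)_{(0)}$, so the various $\ad_{x_{(n)}}(r)$ sit in distinct height components and hence the sum $\sum_n\ad_{x_{(n)}}(r)=\ad_x(r)$ vanishes if and only if each summand does. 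This is exactly the second assertion of the lemma, and it is essentially immediate from the displayed containment plus the boxed observation $r_\Lambda\in(\Lambda^2\g)_{(0)}$.

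For the first assertion I would recall that the triangular decomposition $\g=\n_+\oplus\h\oplus\n_-$ refines the height decomposition: $\h=\g_{(0)}$, $\n_+=\bigoplus_{n>0}\g_{(n)}$, $\n_-=\bigoplus_{n<0}\g_{(n)}$. So writing $x=x_++x_\h+x_-$ amounts to grouping the $x_{(n)}$ by the sign of $n$. Then $\ad_{x_\h}(r)=\ad_{x_{(0)}}(r)\in(\Lambda^2\g)_{(0)}$, while $\ad_{x_+}(r)$ has only strictly positive height components and $\ad_{x_-}(r)$ only strictly negative ones; these three land in complementary subspaces, so $\ad_x(r)=0$ forces each of $\ad_{x_+}(r)$, $\ad_{x_\h}(r)$, $\ad_{x_-}(r)$ to vanish separately. (One could also simply say the coarse three-way splitting is a consequence of the fine height-by-height splitting proved above.)

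I do not anticipate a genuine obstacle here: the only thing to be careful about is making sure $r$ really is height-homogeneous of degree $0$, which is where the preceding discussion — that $\alpha\prec\beta$ forces $\hh(\alpha)=\hh(\beta)$, together with $x_{-\alpha}\wedge x_\alpha\in(\Lambda^2\g)_{(0)}$ and $\Lambda^2\h\subset(\Lambda^2\g)_{(0)}$ — is used, and where one invokes that $\ad_x(r)=\ad_x(r_\Lambda)$ because the symmetric part of $r$ is $\g$-invariant. In the real almost factorizable case the same argument applies verbatim after complexifying, using that $\delta(x)=\ad_x(r_\Lambda)$ for $x\in\g$ as noted earlier. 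So the proof is really just the bookkeeping of gradings, and I would present it in two short paragraphs mirroring the two sentences of the statement.
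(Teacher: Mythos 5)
Your argument is correct and is precisely the one the paper intends: the lemma is stated as an immediate consequence of the boxed observation $r_\Lambda\in(\Lambda^2\g)_{(0)}$ together with $[\g_{(n)},(\Lambda^2\g)_{(k)}]\subseteq(\Lambda^2\g)_{(n+k)}$, so the height components of $\ad_x(r)$ are exactly the $\ad_{x_{(n)}}(r)$ and vanish separately, the coarse triangular splitting being the special case obtained by grouping by the sign of the height. Your bookkeeping, including the reduction to $r_\Lambda$ via invariance of the symmetric part, matches the paper's (essentially unwritten) proof.
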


The proof of the theorem relies in a close observation of
the adjoint action, explicitly stated in the following lemma. 
\begin{lema}
\label{lema:calculo}
Let $x_\gamma\in\g_\gamma$ with $\gamma\in \Phi^+$, 
then
\begin{eqnarray*}
\ad _{x_\gamma}r &=&h_\gamma\w x_\gamma+
\sum_{\gamma\neq\alpha\in\Phi^+}\left(c_{\gamma,-\alpha}
 x_{-\alpha +\gamma}\w x_\alpha +
c_{\gamma,\alpha} x_{-\alpha}\w x_{\gamma+\alpha}
\right)
\\
&&+\sum_{\alpha\prec\beta:\gamma\neq\alpha,\beta}\left(
c_{\gamma,\alpha} x_{-\alpha +\gamma}\w x_{\beta}
+
c_{\gamma,\beta} x_{-\alpha}\w x_{\beta +\gamma}
\right)
+\sum_{\beta:\gamma\prec\beta}\left(
 h_{\gamma}\w x_{\beta}
+
c_{\gamma,\beta} x_{-\gamma}\w x_{\beta +\gamma}
\right)
\\
&&+\sum_{\alpha:\alpha\prec\gamma}c_{\gamma,-\alpha}
 x_{-\alpha+\gamma}\w x_{\gamma}
+[x_\gamma,\lambda]
\end{eqnarray*}
where $c_{\gamma,\pm\alpha}\in\C$ are the structure constants
such that $[x_\gamma,x_{\pm\alpha}]=c_{\gamma,\pm\alpha}x_{\gamma\pm\alpha}$.
In addition, if we write
$\lambda=\sum\limits_{1\leq i<j\leq \el}\lambda_{ij}h_i\wedge h_j
=\frac12\sum\limits_{i,j=1}^{\el}\lambda_{ij}h_i\wedge h_j$ with
$\lambda_{ji}=-\lambda_{ij}$
then
\begin{eqnarray*}
[x_\gamma,\lambda]&=&-
\!\!\!
\sum_{1\leq i<j\leq \el}
\!\!\!
\lambda_{ij}(
\gamma(h_i)x_\gamma\wedge h_j
+\gamma(h_j)h_i\wedge x_\gamma)=
\left(
\sum_{1\leq i<j\leq \el}
\!\!\!
\lambda_{ij}
(\gamma(h_i) h_j
-\gamma(h_j)h_i)
\right)\wedge x_\gamma
\\
&=&
\frac12
\sum_{i,j=1}^\ell
\lambda_{ij}
(\gamma(h_i) h_j
-\gamma(h_j)h_i)\w x_\gamma
=
\sum_{i,j=1}^\ell
\lambda_{ij}
\gamma(h_i) h_j
\w x_\gamma
\end{eqnarray*}
\end{lema}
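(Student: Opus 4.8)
\textbf{Proof proposal for Lemma \ref{lema:calculo}.}
The plan is a direct term-by-term computation. First I would reduce from $r$ to $r_\Lambda$: since the symmetric component of $r$ is $\g$-invariant, $\ad_{x_\gamma}r=\ad_{x_\gamma}r_\Lambda$, and $r_\Lambda$ is given by formula \eqref{rlambda}, namely the standard sum $\sum_{\alpha\in\Phi^+}x_{-\alpha}\w x_\alpha$, the BD sum $\sum_{\alpha\prec\beta}x_{-\alpha}\w x_\beta$, and $\lambda\in\Lambda^2\h$. Then I would apply $\ad_{x_\gamma}$, which acts on $\Lambda^2\g$ as a derivation, $\ad_{x_\gamma}(u\w v)=[x_\gamma,u]\w v+u\w[x_\gamma,v]$, to each of these three blocks separately, using the elementary bracket rules: $[x_\gamma,x_{\pm\alpha}]=c_{\gamma,\pm\alpha}x_{\gamma\pm\alpha}$ (with the convention $c_{\gamma,\pm\alpha}=0$ when $\gamma\pm\alpha\notin\Phi$), $[x_\gamma,x_\gamma]=0$, $[x_\gamma,x_{-\gamma}]=h_\gamma$ (the chosen normalization, from $(x_\gamma,x_{-\gamma})=1$), and $[x_\gamma,h_i]=-\gamma(h_i)x_\gamma$.

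For the standard sum I would split off the index $\alpha=\gamma$, which contributes $\ad_{x_\gamma}(x_{-\gamma}\w x_\gamma)=h_\gamma\w x_\gamma$ (the second Leibniz term vanishing), and for $\alpha\neq\gamma$ I get $c_{\gamma,-\alpha}x_{-\alpha+\gamma}\w x_\alpha+c_{\gamma,\alpha}x_{-\alpha}\w x_{\gamma+\alpha}$, which is exactly the first sum in the statement. For the BD sum $\sum_{\alpha\prec\beta}$ I would distinguish the cases $\gamma\neq\alpha,\beta$; $\gamma=\alpha$; and $\gamma=\beta$. Here one must note that the case $\gamma=\alpha$ \emph{and} $\gamma=\beta$ cannot occur, since that would force $\gamma\prec\gamma$, which is impossible for the Belavin--Drinfeld partial order because of the nilpotency condition; hence the three cases are genuinely disjoint. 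In the case $\gamma\neq\alpha,\beta$ one gets $c_{\gamma,-\alpha}x_{-\alpha+\gamma}\w x_\beta+c_{\gamma,\beta}x_{-\alpha}\w x_{\beta+\gamma}$; in the case $\gamma=\alpha$ (so $\gamma\prec\beta$) one gets $h_\gamma\w x_\beta+c_{\gamma,\beta}x_{-\gamma}\w x_{\beta+\gamma}$; and in the case $\gamma=\beta$ (so $\alpha\prec\gamma$) one gets $c_{\gamma,-\alpha}x_{-\alpha+\gamma}\w x_\gamma$, the other Leibniz term again vanishing. Summing these three contributions reproduces the three BD sums displayed in the statement.

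Finally, for $\lambda=\sum_{1\le i<j\le\el}\lambda_{ij}h_i\w h_j$ I would apply the Leibniz rule together with $[x_\gamma,h_i]=-\gamma(h_i)x_\gamma$ to obtain $[x_\gamma,\lambda]=-\sum_{i<j}\lambda_{ij}\big(\gamma(h_i)x_\gamma\w h_j+\gamma(h_j)h_i\w x_\gamma\big)$, and then use antisymmetry of $\w$ to pull $x_\gamma$ to the right, giving $\big(\sum_{i<j}\lambda_{ij}(\gamma(h_i)h_j-\gamma(h_j)h_i)\big)\w x_\gamma$. The passage to the symmetric form $\frac12\sum_{i,j=1}^\el$ and then to $\sum_{i,j=1}^\el\lambda_{ij}\gamma(h_i)h_j\w x_\gamma$ is the elementary index manipulation using $\lambda_{ji}=-\lambda_{ij}$ and relabelling $i\leftrightarrow j$. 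Collecting the standard, BD and $\lambda$ contributions yields the asserted formula.

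The computation is essentially routine; the only points requiring care are the bookkeeping of which special indices ($\alpha=\gamma$, $\beta=\gamma$) must be extracted separately, the vanishing-structure-constant convention that silently discards non-root terms, and the observation that $\gamma\prec\gamma$ is excluded, which guarantees the BD casework is a genuine partition. Thus the main (mild) obstacle is organizing the casework cleanly rather than any conceptual difficulty.
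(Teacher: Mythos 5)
Your computation is correct and is exactly the direct Leibniz-rule calculation that the paper leaves as ``straightforward,'' including the useful observation that the nilpotency condition excludes $\gamma\prec\gamma$ so the BD casework is a genuine partition. Note only that your coefficient $c_{\gamma,-\alpha}$ in the sum over $\alpha\prec\beta$ with $\gamma\neq\alpha,\beta$ is the correct one coming from $[x_\gamma,x_{-\alpha}]=c_{\gamma,-\alpha}x_{\gamma-\alpha}$; the statement's $c_{\gamma,\alpha}$ there appears to be a typo.
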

\begin{proof}
Straightforward.
\end{proof}
In order to deal with this formula, we simplify it by considering
the following decomposition of $\Lambda^2\g$ 
induced by the triangular decomposition $\g=\h\oplus(\n_+\oplus\n_-)$, namely
\[
\Lambda^2\g=
\Lambda^2\h\oplus (\h\wedge (\n_+\oplus\n_-))
\oplus
\Lambda^2(\n_+\oplus\n_-)
\]
Define $p:
\Lambda^2\g\to
\Lambda^2\h\oplus(\h\wedge (\n_+\oplus\n_-))$,
the canonical projection associated to the above decomposition. The formula of Lemma \ref{lema:calculo} implies the following:
\[p(\ad _{x_\gamma}r)
=h_\gamma\w \left(
x_\gamma
+\sum_{\beta:\gamma\prec\beta}
  x_{\beta}\right)
+
\left(
\sum_{1\leq i<j\leq \el}
\lambda_{ij}
(\gamma(h_i) h_j
-\gamma(h_j)h_i)
\right)\wedge x_\gamma
\]
It is convenient to introduce the element
$H^\gamma_\lambda:=
h_\gamma+
\sum_{i,j=1}^\ell
\lambda_{ij}
\gamma(h_i) h_j$.
Write $\gamma=\sum_{i=1}^\el n_i\alpha_i$ then
\[
H^\gamma_\lambda=
\sum_{j=1} ^\ell\left(n_j+\sum_{i=1} ^\ell
\lambda_{ij}\gamma(h_i)
 \right)h_j
=\sum_{j=1}  ^\ell\left(n_j+\sum_{i,k}
\lambda_{ij}n_k\alpha_k(h_i)
 \right)h_j
\]
Under this notation we have
\[p(\ad _{x_\gamma}r)
=H^ \gamma_\lambda\wedge x_\gamma
+h_\gamma\wedge\left(
\sum_{\beta:\gamma\prec\beta}
  x_{\beta}\right)
\]
\begin{lema}
For any $\gamma\in\Phi$ and any $\lambda\in\Lambda^2\h$, the element $H^\gamma_\lambda\in\h$ is nonzero.
\end{lema}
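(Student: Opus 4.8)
The plan is to test $H^\gamma_\lambda$ against the linear functional $\gamma\in\h^*$ itself and show that $\gamma(H^\gamma_\lambda)\neq 0$; since a functional taking a nonzero value on a vector forces that vector to be nonzero, this suffices. (Trying instead to argue directly from the coefficient equations $n_j+\sum_i\lambda_{ij}\gamma(h_i)=0$ looks more painful, since it would mix the integers $n_j$ with the entries of $\lambda$; evaluating $\gamma$ sidesteps that entirely.)

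Concretely, starting from
\[
H^\gamma_\lambda=h_\gamma+\sum_{i,j=1}^\ell\lambda_{ij}\,\gamma(h_i)\,h_j,
\]
I would apply $\gamma$ to obtain
\[
\gamma(H^\gamma_\lambda)=\gamma(h_\gamma)+\sum_{i,j=1}^\ell\lambda_{ij}\,\gamma(h_i)\,\gamma(h_j).
\]
The key observation is that the double sum vanishes: the coefficient $\lambda_{ij}$ is antisymmetric under $i\leftrightarrow j$ (we have normalized $\lambda_{ji}=-\lambda_{ij}$), whereas $\gamma(h_i)\gamma(h_j)$ is symmetric, so summing over all ordered pairs $(i,j)$ annihilates it. Hence $\gamma(H^\gamma_\lambda)=\gamma(h_\gamma)$.

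It then remains to note that $\gamma(h_\gamma)\neq 0$. Since $h_\gamma=[x_\gamma,x_{-\gamma}]$ with the normalization $(x_\gamma,x_{-\gamma})=1$, the element $h_\gamma$ is exactly the element $t_\gamma\in\h$ representing $\gamma$ through the fixed invariant form, so $\gamma(h_\gamma)=(\gamma,\gamma)$, the induced form on $\h^*$ evaluated on $\gamma$. As the invariant form restricts to a positive-definite form on the real span of the roots, $(\gamma,\gamma)>0$; at any rate it is nonzero. Therefore $\gamma(H^\gamma_\lambda)=(\gamma,\gamma)\neq 0$, and so $H^\gamma_\lambda\neq 0$.

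I do not expect a genuine obstacle here: once one thinks of pairing with $\gamma$, the antisymmetric-against-symmetric cancellation is immediate and the remainder is the standard fact $(\gamma,\gamma)\neq 0$. The only point that deserves a line of care is matching conventions — making sure that under $(x_\gamma,x_{-\gamma})=1$ the element $h_\gamma$ is $t_\gamma$ (so that $\gamma(h_\gamma)=(\gamma,\gamma)$) rather than the coroot $\gamma^\vee$ (which would give $2$); since both are nonzero, nothing delicate actually happens.
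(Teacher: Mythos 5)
Your proof is correct and is essentially the paper's own argument: the paper also pairs $H^\gamma_\lambda$ with $\gamma=\sum_m n_m\alpha_m$ (written in matrix form as $\underline{n}\cdot K\cdot\underline{n}^t+\underline{n}\cdot K\cdot\Lambda\cdot K\cdot\underline{n}^t$), kills the $\lambda$-term by the same symmetric-versus-antisymmetric cancellation, and concludes from positive-definiteness of the form on the real span of the roots that the remaining term $(\gamma,\gamma)$ is nonzero. Your version just avoids the matrix notation; the content is identical.
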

\begin{proof}Recall we write $\gamma=\sum_{i=1}^\el n_i\alpha_i$; 
if $H^\gamma_\lambda=0$ then,
in particular, $\alpha(H^\gamma_\lambda)=0$ for all $\alpha\in\h^*$, so
\[0=\sum_{m=1}^\ell\alpha_m(H^\gamma_\lambda)n_m
=\sum_{j,m=1} ^\ell n_m
\alpha_m(h_j)n_j
+
\sum_{j,k,m=1} ^\ell
n_k\alpha_k(h_i)\lambda_{ij} \alpha_m(h_j)n_m
 \]
It is  convenient to use matrix notation.
Let us denote by
 $K$ the matrix with entries  $\kappa_{ij}= \alpha_i(h_j)$, 
 $\Lambda$ the matrix
with coefficients $\lambda_{ij}$ and $\sub n=(n_1,\dots,n_\el)$. 
Notice that $K$ is the matrix of the Killing form
restricted to $\h$. The formula above  can be written as
\[
0=\sub n\cdot K\cdot \sub n^t+
\sub n\cdot K\cdot \Lambda\cdot K\cdot \sub n^t
\]
The second term is easily seen to be zero since
\[
\sub n\cdot K\cdot \Lambda\cdot K\cdot \sub n^t=
\left(\sub n\cdot K\cdot \Lambda\cdot K\cdot \sub n^t\right)^t=
\sub n\cdot K^t\cdot \Lambda^t\cdot K^t\cdot \sub n^t=
-\sub n\cdot K\cdot \Lambda\cdot K\cdot \sub n^t
\]
where the first equality holds because we are transposing a complex number, the second is valid
for any product of matrices, and the last
uses the fact that $K$ is symmetric and $\Lambda$ anti-symmetric.
Besides, in the basis $\{h_1,\dots,h_\ell\}$, the matrix $K$ is {\em real
symmetric and positive defined}
(see for instance \cite{Kn}, corollary 2.38), hence
\[\sum_{m}\alpha_m(H)n_m
 =\sum_{j,m}n_m\kappa_{mj}n_j=
\sub n \cdot K\cdot \sub n^t >0\
\forall \sub n\in\R^n\setminus 0;\]
in particular, it gives a nonzero real number for any $0\neq (n_1,\dots,n_\el)\in
\ZZ ^n$.
\end{proof}

\begin{prop}\label{prop-n+}
Let $x\in\n_+\oplus\n_-$, then $\ad_x(r)=0\iff x=0$.
\end{prop}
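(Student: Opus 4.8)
The plan is to use the height decomposition from Lemma~\ref{lema:height} to reduce to a homogeneous $x$, and then the projection $p$ to extract a usable equation. First I would write $x = x_+ + x_-$ with $x_\pm \in \n_\pm$, and by Lemma~\ref{lema:height} it suffices to treat each height component separately, so assume $x = x_{(n)}$ is homogeneous of height $n$; without loss of generality (replacing $r$ by $r^{21}$, i.e.\ swapping the roles of $\n_+$ and $\n_-$, which corresponds to the opposite choice of positive roots and leaves the hypotheses intact) we may take $n \geq 1$, hence $x \in \n_+$, and write $x = \sum_{\gamma \in \Phi^+,\ \hh(\gamma)=n} x_\gamma$ with $x_\gamma \in \g_\gamma$.

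Next I would apply the projection $p \colon \Lambda^2\g \to \Lambda^2\h \oplus (\h \wedge (\n_+ \oplus \n_-))$ to the equation $\ad_x(r) = 0$. By linearity and the formula for $p(\ad_{x_\gamma} r)$ computed just before the statement,
\[
0 = p(\ad_x r) = \sum_{\gamma} \left( H^\gamma_\lambda \wedge x_\gamma + h_\gamma \wedge \Big(\sum_{\beta:\gamma \prec \beta} x_\beta\Big) \right).
\]
The key point is that the terms $H^\gamma_\lambda \wedge x_\gamma$ live in $\h \wedge \g_\gamma$ while the correction terms $h_\gamma \wedge x_\beta$ live in $\h \wedge \g_\beta$ with $\gamma \prec \beta$, in particular $\beta \neq \gamma$ and $\beta \in \wt\Gamma_2$. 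So I would fix a root $\gamma$ appearing in $x$ that is maximal for the order $\prec$ among those appearing (more precisely, a $\gamma$ with $x_\gamma \neq 0$ such that no $\gamma' \prec \gamma$ has $x_{\gamma'} \neq 0$; such $\gamma$ exists since $\prec$ restricted to the finitely many roots of height $n$ appearing in $x$ has a maximal element — here I should be slightly careful and pick $\gamma$ maximal, so that no correction term $h_{\gamma'} \wedge x_\gamma$ coming from some other $x_{\gamma'}$ in $x$ can land in $\h \wedge \g_\gamma$). Then the $\h \wedge \g_\gamma$-component of the displayed equation reads $H^\gamma_\lambda \wedge x_\gamma = 0$. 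Since $H^\gamma_\lambda \neq 0$ by the previous lemma, and $x_\gamma \in \g_\gamma$ with $\dim \g_\gamma = 1$ so $H^\gamma_\lambda$ and $x_\gamma$ are linearly independent in $\g$, we conclude $x_\gamma = 0$, a contradiction. Hence no such $\gamma$ exists, i.e.\ $x = 0$.

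The main obstacle I anticipate is bookkeeping with the partial order $\prec$: I must make sure that when I isolate the $\h \wedge \g_\gamma$-graded piece, the only contribution is $H^\gamma_\lambda \wedge x_\gamma$, with no stray term $h_{\gamma'} \wedge x_\gamma$ arising from another summand $x_{\gamma'}$ with $\gamma' \prec \gamma$. Choosing $\gamma$ to be $\prec$-maximal among the roots occurring in $x$ (rather than $\prec$-minimal) rules this out, since $\gamma' \prec \gamma$ would contradict maximality. I also need the elementary facts that $\prec$ preserves height (so all relevant $\beta$ with $\gamma \prec \beta$ also have height $n$, keeping everything inside one height block, consistent with Lemma~\ref{lema:height}) and that $\prec$ has no cycles, so a maximal element exists; both follow from the BD-triple conditions recalled in the text. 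Everything else — the formula for $p(\ad_{x_\gamma}r)$, the non-vanishing of $H^\gamma_\lambda$, one-dimensionality of root spaces — is already available, so the argument is short once the grading is organized correctly.
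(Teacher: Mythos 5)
Your argument is correct and is essentially the paper's own proof: the same projection $p$, the same non-vanishing of $H^\gamma_\lambda$, and the same extremal-element trick to ensure no correction term $h_{\gamma'}\w x_\gamma$ with $\gamma'\prec\gamma$ interferes with the $\h\w\g_\gamma$-component (the paper phrases this by first killing the coefficients supported on the $\ZZ$-span of $\Gamma_1\cup\Gamma_2$ and then the rest, but the content is identical). One cosmetic remark: the condition you actually impose, that no $\gamma'\prec\gamma$ occurs in $x$, is what the paper calls \emph{minimality} with respect to $\prec$, not maximality, but since you state the condition explicitly this is only a naming issue.
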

\begin{proof}
Let $x=\sum_{\gamma\in\Phi}c_\gamma x_\gamma$ with $c_\gamma$ arbitrary, and suppose
$\ad_xr=0$. Since $\ad_{-}(r)$ preserves the height 
(see Lemma \ref{lema:height}), we can consider
different heights separately. Since we will not need such refinement
in all its strengh, we
will only consider separately the cases $\hh(\gamma)>0$ or
$\hh( \gamma)<0$, namely $\gamma$ a positive or negative root.

So let us consider an element
$x=\sum_{\gamma\in\Phi^+}c_\gamma x_\gamma$, 
the case in $\Phi  ^-$ is analogous. We have 
\[
0=
p(\ad_xr)=
\sum_{\gamma\in\Phi^+}
c_\gamma 
H^\gamma_\lambda\w  x_\gamma
+
\sum_{\gamma\in\Phi^+}
c_\gamma h_\gamma\wedge
\left(\sum_{\beta:\gamma\prec\beta}
  x_{\beta}\right)
\]
Denote $\wt\Gamma$ the $\ZZ$-span of the
discrete parameter;
we claim that if $\gamma\in\wt \Gamma\cap \Phi^+$, then $c_{\gamma}=0$. To see this, consider
 $\Gamma_0\subset\Phi^+$  the set of minimal elements
 $\gamma\in\wt\Gamma$
such that $c_\gamma\neq 0$ (minimal with respect  to $\prec$). 
Notice that the terms with $x_\beta$ where 
$\gamma\prec \beta$ can not cancel any term with $x_\gamma$ for 
$\gamma\in\Gamma_0$, because if  $c_\beta\neq 0$ then
$\gamma$ could not  be minimal.
Hence, if we consider only elements in
$\Gamma_0$, neccesarily
\[
0=
\sum_{\gamma\in\Gamma_0}
c_\gamma H^\gamma_\lambda\wedge x_\gamma
\]
since the $\{x_\gamma\}_{\gamma\in\Gamma_0}$ are linearly independent, then
\[
0=
c_\gamma H^\gamma_\lambda
\quad \forall \gamma\in\Gamma_0
\]
But $H^\gamma\neq 0$ implies
 $c_{\gamma}=0$, which is absurd because
$\gamma\in\Gamma_0$. We conclude 
that $c_\gamma=0$ for all $\gamma$ in  $\wt\Gamma$. Hence
the equality $\ad_x(r)=0$ implies
\[
0=
p(\ad_xr)=
\sum_{\gamma\in\Phi^+}
c_\gamma 
H^\gamma_\lambda\w 
x_\gamma
\]
Now we can repeat word by word the same argument as in case $\gamma\in \Gamma_0$,
namely the linear independence of the $x_\gamma$ implies $c_\gamma H^\gamma_\lambda
=0$ for all $\gamma\in\Phi^+$, but $H^\gamma_\lambda\neq 0$ implies $c_\gamma=0$ 
for all $\gamma\in \Phi^+$.
\end{proof}
\begin{proof}[Proof of theorem \ref{teo:main}]
In order to conclude the proof, 
almost all work is done.
 We know that if $\ad_x(r)=0$ then $x\in \g_{(0)}=\g_0=\h$.
Notice that the standard component and the continuous parameter have
total weight equal to zero, \ie $\ad_H(r_{st}+\lambda)=0$
 for $H\in \h$, then the only terms surviving in $\ad_H(r)$ are
\[
\ad_H(r)=\sum_{\alpha\prec\beta}\ad_H(x_{-\alpha}\wedge x_{\beta})=
\sum_{\alpha\prec\beta}(\beta(H)-\alpha(H))x_{-\alpha}\wedge x_{\beta}
\]
so $\alpha(H)=\beta(H)$ for all $\alpha\prec\beta$, and that is equivalent
to 
$\alpha(H)=(\tau\alpha)(H)$ for all $\alpha\in \Gamma_1$.
At this stage, we have finished the description of $\Ker(\delta)$, but
in virtue of Corollary \ref{coro:bider}, this 
implies as well a description of the biderivations in $(\g,\delta)$. 
Notice that in the real case, even if $r\in(\g\ot_R\C)\ot_\C(\g\ot_R\C)
\setminus \g\ot_\R\g$, we know that $r_{\Lambda}\in\Lambda^2_\R\g$,
so the proof of the complexified Lie algebra descends to the real form $\g$. 
\end{proof}


\begin{rem}
Corollary \ref{bider-semi} says that if $\ad_x\in\Bider(\g,r)$ then
$x\in\h$, so corollary \ref{bider-semi} together with the very last argument above
gives an alternative proof of 
theorem \ref{teo:main}.
\end{rem}

\subsection*{Extension of scalars}

For a given Lie bialgebra, it is possible to define a (double) complex 
of the form $C^{p,q}\g=\Lambda^p\g^*\ot\Lambda^q\g$, where the vertical
differentials are the Chevalley-Eilenberg differential of $\g$ with coefficients
in $\Lambda^q\g$, and horizontal  differentials are the dual of the Chevalley-Eilenberg
differential correspondig to the Lie coalgebra structures.
This complex
was first describe in
\cite{LR}. In particular,
for $p=q=1$, if one identifies $\g^*\ot \g=\Hom(\g,\g)=\End(\g)$, we get that
the kernel of the vertical differential consists on derivations (the image of the
preceding differential are the inner ones), and the kernel of horizontal differential
consists  on coderivations, so the kernel of both differentails is precisely
the set of biderivations. As a consequence, the set of biderivations extends
scalars in the sense that if $\K\subset \E$ is a field extension, then
$\bider_\E(\g\ot_\K\E)=\bider_\k(\g)\ot_\K\E$, and a given biderivation $D$
of a $\K$-Lie algebra $\g$ is inner if and only if $D\ot_\K\Id_\E$ is
inner as biderivation of $\g\ot_\K\E$.

\section{Main Construction for Trivial Abelian Extensions}
Along this section, we denote by $V$ a $d$-dimensional vector space over a field $\K$, $\{t_1,\dots,t_d\}$ a basis of $V$ and 
$\{t_1^*,\dots,t_d^*\}$ the associated dual basis of $V^*$.

\begin{teo}
\label{propmachine}
Let $(\g,\delta_\g)$ be a Lie bialgebra, 
$(V,\delta_V)$  a $d$-dimensional Lie coalgebra, $V^*$ the dual Lie algebra and $\DD:V^*\to\bider( \g)$
a Lie algebra map, then the following map defines a Lie bialgebra structure on 
$\l=\g\times V$, for all $x\in\g$ and $v\in V$:
\[
\delta(x+v)=\delta_\g(x)+2 \sum _{i=1}^dD_i(x)\w t_i+\delta_V(v)
\]
where $\{t_1,\dots,t_d\}$ is a basis of $V$,
$\{t_1^*,\dots,t_d^*\}$ the dual basis of $V^*$ and
 $D_i=\DD(t_i^*)$, $1\leq i\leq d$.
\end{teo}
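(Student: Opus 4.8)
The plan is to verify directly that the proposed $\delta$ satisfies the two defining axioms of a Lie bialgebra on $\l = \g \times V$: the 1-cocycle condition with respect to the bracket of $\l$ (which has $[\g,V]=0$ and $V$ abelian), and the co-Jacobi identity. Since $\l = \g \times V$ as a Lie algebra, for $x,y \in \g$ we have $[x+v, y+w]_\l = [x,y]_\g$, so the cocycle condition only needs checking on pairs from $\g$ and on mixed pairs. First I would check the 1-cocycle condition $\delta[a,b] = \ad_a \delta(b) - \ad_b \delta(a)$ for $a,b \in \l$. For $a=x$, $b=y$ both in $\g$: the $\delta_\g$ part works because $\delta_\g$ is a cocycle on $\g$; the middle term $2\sum_i D_i([x,y])\w t_i$ must equal $\ad_x\big(2\sum_i D_i(y)\w t_i\big) - \ad_y\big(2\sum_i D_i(x)\w t_i\big)$, and since $\ad_x(t_i)=0$ in $\l$, the right side is $2\sum_i([x,D_i(y)] - [y,D_i(x)])\w t_i = 2\sum_i D_i([x,y])\w t_i$ precisely because each $D_i$ is a \emph{derivation} of $\g$; the $\delta_V$ part vanishes on both sides since $[x,y]\in\g$ kills $\delta_V$ and $\ad_x\delta_V(w)=0$. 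For $a = x \in \g$, $b = v \in V$: $[x,v]_\l = 0$, so we need $0 = \ad_x\delta(v) - \ad_v\delta(x)$; but $\ad_x\delta_V(v) = 0$, $\ad_v$ annihilates $\delta_\g(x)$ and $\delta_V(v)$, and $\ad_v\big(2\sum_i D_i(x)\w t_i\big) = 2\sum_i [v,D_i(x)]\w t_i + 2\sum_i D_i(x)\w[v,t_i] = 0$ because $[v,\g]=0=[v,t_i]$. For $a=v,b=w\in V$, everything vanishes trivially. So the cocycle condition reduces exactly to: each $D_i$ is a derivation of $\g$, plus $\delta_\g$, $\delta_V$ are cocycles.

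Next I would check co-Jacobi, i.e. $\Alt\big((\delta\ot\id)\delta\big) = 0$, equivalently $\partial_\delta^2 = 0$ on $\l$. The natural approach is to decompose $\Lambda^\bullet\l = \Lambda^\bullet\g \ot \Lambda^\bullet V$ and track where the three ``pieces'' of $\delta$ land. Write $\delta = \delta_\g + \Phi + \delta_V$ where $\Phi(x) = 2\sum_i D_i(x)\w t_i \in \g\w V$ and extend each as a degree-one derivation. Then $\partial_\delta^2 = 0$ expands into cross-terms; the terms $\partial_{\delta_\g}^2$ and $\partial_{\delta_V}^2$ vanish by co-Jacobi for $\g$ and $V$ respectively. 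The surviving conditions should be: (i) a ``$\g\w\g\w V$'' component forcing that the $D_i$ are \emph{coderivations} of $(\g,\delta_\g)$ — this is exactly where $\DD$ landing in $\bider(\g)$ rather than just $\Der(\g)$ is used; and (ii) a ``$\g\w V\w V$'' component which, after expansion, becomes the statement that $[D_i, D_j] = \sum_k c^k_{ij} D_k$ where $c^k_{ij}$ are the structure constants of $V^* $ dual to $\delta_V$ — i.e. that $\DD : V^* \to \bider(\g)$ is a \emph{Lie algebra homomorphism}. The factor of $2$ in front of $\Phi$ is presumably the bookkeeping constant that makes this Lie-homomorphism identity come out without spurious coefficients.

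Concretely for (ii): the coefficient of $t_i\w t_j$ (times something in $\g$) in $\partial_\delta^2(x)$ picks up both $\Phi$ applied after $\Phi$ — giving a term with $D_i D_j - D_j D_i$ — and $\delta_V$ applied after $\Phi$ (and $\Phi$ after $\delta_V$) — giving $\delta_V(t_\ell)$ paired against $D_\ell(x)$, which contributes $\sum_\ell c^\ell_{ij} D_\ell(x)$ where $\delta_V(t_\ell) = \sum_{i<j} c^\ell_{ij}\, t_i\w t_j$ so that $[t_i^*, t_j^*]_{V^*} = \sum_\ell c^\ell_{ij} t_\ell^*$. Setting the total to zero gives $[D_i,D_j] = \DD([t_i^*,t_j^*])$, which holds by hypothesis. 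I would organize the computation using the Sweedler-type notation already in use in the paper and the identity $\Alt((\delta\ot\id)\delta)=0 \iff \partial_\delta^2 = 0$, checking $\partial_\delta^2$ on generators $x\in\g$ and $t_i\in V$ separately (it automatically vanishes on $V$ since $\delta$ restricted there is just $\delta_V$ and $\partial_{\delta_V}^2=0$... except one must be careful that $\partial_\delta(t_i) = \delta_V(t_i) \in \Lambda^2 V$, so applying $\partial_\delta$ again only reactivates $\delta_V$, giving $0$ — good).

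The main obstacle I expect is purely organizational: correctly enumerating the graded components of $\partial_\delta^2(x)$ for $x\in\g$ — there are contributions landing in $\Lambda^3\g$, in $\Lambda^2\g\w V$, in $\g\w\Lambda^2 V$, and in $\Lambda^3 V$ — and matching each vanishing condition to the correct hypothesis (cocycle/co-Jacobi for $\g$, derivation property of $D_i$, coderivation property of $D_i$, Lie-homomorphism property of $\DD$, co-Jacobi for $V$) without sign or factor errors. Once the bookkeeping is set up, each individual identity is a one-line consequence of an assumption. I would present the co-Jacobi verification component-by-component, noting that the $\Lambda^3 V$-component is co-Jacobi for $V$, the $\Lambda^3\g$-component is co-Jacobi for $\g$, the $\Lambda^2\g\w V$-component is the coderivation condition on the $D_i$, and the $\g\w\Lambda^2 V$-component is the homomorphism condition on $\DD$; the $\Lambda^2\g\w V$ cross term between $\delta_\g$ and $\Phi$ needs the compatibility $(D_i\ot\id + \id\ot D_i)\delta_\g = \delta_\g D_i$, which is exactly $D_i \in \CoDer(\g,\delta_\g)$.
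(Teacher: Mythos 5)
Your proposal is correct and follows essentially the same route as the paper: extend $\delta$ to a degree-one derivation $\partial_\delta$, split it as $\partial_{\delta_\g}+\partial_\Phi+\partial_{\delta_V}$, and match each graded component of $\partial_\delta\delta(x)$ (in $\Lambda^3\g$, $\Lambda^2\g\w V$, $\g\w\Lambda^2V$, $\Lambda^3V$) with the corresponding hypothesis --- co-Jacobi for $\delta_\g$, the coderivation property of the $D_i$, the Lie-algebra-map property of $\DD$, and co-Jacobi for $\delta_V$ --- which is exactly the paper's $A+B+C$ computation. You additionally write out the 1-cocycle verification (reducing to the $D_i$ being derivations and to $V$ being central), which the paper states as needed but does not spell out.
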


\begin{proof}
We  need to prove co-Jacobi and the 1-cocycle condition.
In order to prove co-Jacobi for $\delta$, 
for any linear function $f:\g\to\Lambda ^2(\g)$, denote by $\partial _f \colon\Lambda ^2(\g)\to\Lambda ^3(\g)$ the map
given by $\partial _f(x\wedge y)=f(x)\wedge y-x\wedge f(y)$. 
So, under this notation, $\delta$ satisfies co-Jacobi if and only if $\partial_{\delta}\circ\delta=0$.
Notice that $\partial _{f+g}=\partial _f+\partial _g$, so
$$\partial_{\delta}=\partial_{\delta_\g}+2\sum_{i=1}^d\partial_{D_i(-)\w t_i}
+\partial_{\delta_V}
$$

Let us prove  first that $\partial_{\delta}\circ\delta (x)=0$ for any $x\in\g$. 
\[
\begin{array}{rcccccc}
\partial_{\delta}(\delta(x))&=&\partial_{\delta}(\delta _{\g}(x))&&&+&2\sum\limits_{i=1}^d\partial_{\delta}(D_ix\wedge t)
\\
&=&\partial_{\delta _{\g}}(\delta _{\g}(x))&+&2\sum\limits_{i=1}^d\partial_{D_i\wedge t_i}(\delta _{\g}(x))
&+&2\sum\limits_{i=1}^d(\delta(D_ix)\wedge t_i-D_ix\wedge \delta (t_i))
\\
&=&A&+&B&+&C\\
\end{array}\]
where these three terms are computed separetely as follows. The first term,
A=$\partial_{\delta _{\g}}(\delta _{\g}(x))$
has to be zero  since
$\delta _{\g}$ satisfies co-Jacobi.
For the second term,
\[\frac12B=
\sum_{i=1}^d\partial_{D_i\wedge t_i}(\delta _{\g}(x))
=\sum_{i=1}^d(D_ix_1^{\g}\wedge t_i\wedge x_2^{\g}-x_1^{\g}\wedge D_i x_2^{\g}\wedge t_i)
\]
\[
=-\sum_{i=1}^d\left((D_i\ot \id +\id\ot D_i)
(x_1^{\g}\wedge x_2^{\g})\wedge t_i\right)
\]
\[
=-\sum_{i=1}^d\left(
(D_i\ot \id +\id\ot D_i)
(\delta _\g x)\wedge t_i
\right)
\in \Lambda^2\g\w V
\]
where we used the Sweedler type notation
$\delta_\g x=x_1^\g\w
x_2^\g$. Half of the third term equals
\[\frac12C=\sum_{i=1}^d(\delta(D_ix)\wedge t_i-D_ix\wedge \delta (t_i))
\]\[
=\sum_{i=1}^d\delta_{\g}(D_ix)\wedge t_i
+2\sum_{i,j=1}^dD_j(D_ix)\wedge t_j\wedge t_i
-\sum_{i=1}^dD_ix\wedge \delta _V(t_i)
\]
Notice that, in C, only the first sum belongs to $\Lambda^2\g\w V$, and it cancels with B
becauuse $D_i$ are coderivations.
It only remains to verify that  the second and third terms of 
C cancel each other, or, equivalently, that the following identity holds
\begin{eqnarray}2
\sum_{i,j=1}^dD_j(D_ix)\wedge t_j\wedge t_i
=\sum_{i=1}^dD_ix\wedge \delta _V(t_i)\label{vienen-de-C1}
\end{eqnarray}
 Observe that in the left hand side we have
$$2\sum_{i,j=1}^dD_j(D_ix)\wedge t_j\wedge t_i
=\sum_{i,j=1}^d[D_j,D_i](x)\wedge t_j\wedge t_i
$$
because $ t_j\wedge t_i$ is antisymmetric in the indexes $i,j$. 
On the right hand side of (\ref{vienen-de-C1}), we may write $\delta _V(t_k)$ as a linear combination of the $ t_j\wedge t_i$, explicitly
$
\delta _V(t_k)=
\sum_{i,j=1}^dc_k^{j,i} t_j\wedge t_i
$.
So,  identity (\ref{vienen-de-C1}) is also equivalent to
\[[D_j,D_i]=
\sum_{k=1}^dc_k^{j,i}D_k
\]
which holds because the map
$\DD:V^*\to \Bider(\g)$, $t_i^*\mapsto D_i$,  is a Lie algebra map.

Finally, 
$\delta|_V=\delta_V$ and $ \delta_V(V)\subseteq\Lambda^2V$ since by construction
$(V,\delta_V)$ is a Lie subcoalgebra. Hence,
$\partial_{\delta}\delta(v)=\partial_{\delta}\delta_V(v)=\partial_{\delta_V}\delta_V(v)=0
$ for any $v\in V$.

\end{proof}

\begin{ex} As a toy example, consider $\g=\aff_2(\K)$ the non-abelian 2-dimensional Lie algebra, whith basis
$\{h,x\}$ and bracket $[h,x]=x$, and $V=\K t$. All possible cobrackets in $\aff_2(\K)$
up to isomorphism of Lie bialgebras are 
(see \cite{FJ}) as follows:
\begin{enumerate}
\item $ \delta^0(h)=h\w x$, $\delta^0(x)=0$, in this case 
$\D=-\ad_x$ and $\bider(\aff_2,\delta^0)=\K\ad_x$;  or
\item 
the 1-parameter family
$ \delta_\mu(h)=0$, 
$\delta_ \mu(x)=\mu h\w x$,  $\mu\in\K$, so  $\D=\mu\ad_h$. In this case,
 $\bider(\aff_2,\delta_\mu)=\K\ad_h$ if $\mu\neq 0$
and $\bider(\aff_2,\delta_\mu)=\Der(\aff_2)$ if $\mu=0$.
Notice that $\Der(\aff_2)=\InDer(\aff_2)$.
\end{enumerate}
The biderivations given above were easily obtained by means of corollary \ref{coro:bider}.
The procedure described in  theorem 
\ref{propmachine} says that 
if $D\in\bider(\aff_2(\K),\delta_{\aff_2})$,
\[
\delta(t)=0,\quad 
\delta(u)=\delta_{\aff_2}(u)+D(u)\w t, \quad  \forall u\in\aff_2(\K)
\]
is a Lie cobracket on $\aff_2(\K)\times \K$. 
We obtain the whole list of possible such choices:
\begin{center}
{\em Possible Lie cobrackets on $\aff_2(\K)\times \K$}
\end{center}
\[
\begin{array}{|rllcc|}
\hline
i)&\delta(x)=0&\delta(h)=h\w x+x\w t& \delta(t)=0&\\
\hline
ii)&\delta(x)=0&\delta(h)=h\w x& \delta(t)=0&\\
\hline
iii)&\delta(x)=\mu h\w x +x\w t&\delta(h)=0&\delta(t)=0 &\mu\neq 0\\
\hline
iv)&\delta(x)=\mu h\w x&\delta(h)=0&\delta(t)=0 & \mu\neq 0\\
\hline
v)&\delta(x)=D(x)\w t&\delta(h)=D(h)\w t, \ D\in\Der(\aff_2)&\delta(t)=0&\\
\hline
\end{array}
\]
Notice that the Lie bialgebra of the case  $i)$ is isomorphic
to the one of case $ii)$ by means of the map
$x\mapsto x;\ h\mapsto h + t
;\ t\mapsto t$.
Analogously, the Lie bialgebra of the case  $iii)$ with parameter $\mu$ is isomorphic
to the one of case  $iv)$ with parameter $-\mu$, by means of the map
$x\mapsto x;\ h\mapsto h + \frac1\mu t
;\ t\mapsto t$. In case $v)$ if the derivation $D=\ad_{\alpha x+\beta h}$ then
$D:x\mapsto \beta x,\ h\mapsto -\alpha x$,
so the cobracket has the form
$\delta x=\beta x\w t$,  $\delta t=0$ and $\delta h=-\alpha x\w t$.
In matrix notation, choosing basis $\{x,t,h\}$ of the Lie algebra
 $\l=\aff_2\times \K t$, and
basis $\{x\w t,t\w h,h\w x \}$ of
$\Lambda^2\l$, the  cobrackets given by the above construction are
\[ii)
\left(
\begin{array}{ccc}
0&0&0\\
0&0&0\\
0&0&1
\end{array}
\right);\
iv)\left(
\begin{array}{ccc}
0&0&0\\
0&0&0\\
\mu&0&0
\end{array}
\right);\
v)\left(
\begin{array}{ccc}
\beta&0&-\alpha\\
0&0&0\\
0&0&0
\end{array}
\right)
\]
Case $v)$
is isomorphic to
$\left(
\begin{array}{ccc}
0&0&1\\
0&0&0\\
0&0&0
\end{array}
\right)
$ 
if $\alpha\neq 0$, $\beta=0$, 
simply by considering the transformation $x\leftrightarrow \frac1\alpha x$. If
$\beta\neq 0$, then $v)$ is isomorphic to
$\left(
\begin{array}{ccc}
1&0&0\\
0&0&0\\
0&0&0
\end{array}
\right)
$ by the transformation $x\mapsto x$, $h\mapsto h+\frac\alpha\beta x$, $t\mapsto 
\beta t$.
If one compares all this possibilities with the classification result in \cite{FJ}
for the Lie algebra $\r_{3,\lambda=0}$ one sees that we have covered all possibilities.
This is not surprising due to the following result. 
\end{ex}

Next theorem says that with some extra hypothesis,  proposition \ref{propmachine} has its converse. See
 table of example \ref{tabla}
 for non-semisimple examples  where next theorem applies.

\begin{teo}
\label{teo-producto}
Let $\g$ be a Lie algebra such that $(\Lambda^2\g)^\g=0$ and
 $\Z(\g)=0$; let $V$ be a vector space considered as abelian Lie algebra;
assume that either  $\dim V>1$ and $[\g,\g]=\g$, or $\dim V=1$.
If $\l=\g\times V$ 
then 
all Lie cobrackets on $\l$ are as in theorem
\ref{propmachine}.
 Explicitly, 
if $\delta$ defines a Lie bialgebra structure on $\l$, then
\begin{enumerate}
\item  $\delta(V)\subseteq\Lambda^2(V)$,
so, $V$ is a Lie subcoalgebra with $\delta_V=\delta|_V$.
In particular, $V$ is an ideal and a coideal,
hence, $\l/V$ inherits a unique Lie bialgebra structure such that
$\pi:\l\to \l/V$ is a Lie bialgebra map.
\item Let $\pi_\g:\l\to\g$ be the canonical projection associated to the decomposition $\l=\g\times V$, then $\delta_{\g}:=(\pi _{\g}\w\pi _{\g})\circ\delta |_{\g}\colon \g\to \Lambda^2\g$
is a Lie bialgebra structure on $\g$ and $\l /V\cong\g$ canonically as Lie bialgebras.
\item
$\delta(\g)\subseteq \Lambda^2\g\oplus \g\wedge V$.
 If $\{t_i\}_{i=1}^d$ is a basis of $V$, then
for any $x\in\g$, $\delta(x)$ is of the form
\[
\delta x=\delta _\g x+2\sum_{i=1}^dD_ix\w t_i
\]
 where $D_i:\g\to\g, i=1,\dots,d$, are derivations and coderivations of $(\g,\delta _\g)$.
 The linear subspace generated by $\{D_1,\cdots,D_d\}$ is a Lie subalgebra of $\BiDer(\g)$; moreover,
the map $\DD\colon (V^*,\delta_V^*)\to \BiDer(\g)$ defined by $\DD(t_i^*)=D_i$, 
is a Lie algebra map.

\item 
Let $(\l,\delta )$ be the Lie bialgebra  associated to a data 
$(\l,\delta_\g,\delta_V,D_1,\cdots ,D_d)$. Let  $\Phi =(\phi_\g,\phi_V)$ be a linear automorphism of $\l$, with $\phi_\g$  a Lie algebra automorphism of $\g$ and $\phi_V\in\GL(V)$.
 If we denote by $\wt \delta_\g=(\phi_\g\wedge\phi_\g)\circ \delta_\g\circ\phi_\g^{-1}$, $\wt\delta_V=(\phi_V\wedge\phi_V)\circ \delta_V\circ\phi_V^{-1}$,
$\wt D_i:=\sum _{j=1}^dA_{ij}\phi_\g\circ D_j\circ \phi _\g^{-1}$, where
 $\phi_V(t_j)=\sum _{i=1}^dA_{ij}t_i$, $1\leq j\leq d$,
and $(\l,\wt\delta)$ the Lie bialgebra  associated to the data $(\l,\wt\delta_\g,\wt\delta_V,\wt D_1,\cdots ,\wt D_d)$, then
$\Phi:(\l, \delta)\to^{\!\!\!\!\!\!\!\sim} (\l,\wt \delta)$ is a Lie bialgebra isomorphism. 
If, moreover, $[\g,\g]=\g$ then any Lie bialgebra isomorphism from 
$(\l, \delta)$ to $(\l,\wt \delta)$ is of this form.
\end{enumerate}
\end{teo}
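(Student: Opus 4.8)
The plan is to prove Theorem \ref{teo-producto} in the order of its four numbered parts, each building on the previous.

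\medskip

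\textbf{Part 1.} First I would show $\delta(V)\subseteq\Lambda^2 V$. Since $V$ is an abelian ideal and $\Z(\g)=0$, one checks $\Z(\l)=V$ (anything commuting with $\g$ lies in $V$, anything in $V$ commutes with $V$, and $[\g,V]$ can be nonzero only through the action, which for $\l=\g\times V$ a direct product is trivial). By Proposition \ref{central}(2), $\delta(\Z(\l))\subseteq(\Lambda^2\l)^\l$. Then I would use the $\l$-module decomposition $\Lambda^2\l=\Lambda^2\g\oplus(\g\ot V)\oplus\Lambda^2 V$ and compute invariants: $(\Lambda^2\g)^\l=(\Lambda^2\g)^\g=0$ by hypothesis; $(\g\ot V)^\l=\g^\g\ot V=\Z(\g)\ot V=0$; and $(\Lambda^2 V)^\l=\Lambda^2 V$ since $V$ acts trivially. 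Hence $\delta(V)\subseteq\Lambda^2 V$, which makes $V$ a sub-Lie-coalgebra, so $V$ is a coideal; being also an ideal, $\l/V\cong\g$ inherits the quotient Lie bialgebra structure by Proposition \ref{central}(1).

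\medskip

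\textbf{Part 2 and Part 3.} Next I would analyze $\delta|_\g$. Writing $\delta(x)=\delta_\g(x)+\sum_i E_i(x)\w t_i+\theta(x)$ with $\delta_\g(x)\in\Lambda^2\g$, $E_i(x)\in\g$, $\theta(x)\in\Lambda^2 V$, I first need $\theta=0$. For $\dim V=1$ this is automatic. For $\dim V>1$ I would use $[\g,\g]=\g$: by Proposition \ref{central}(1), $\delta[\g,\g]\subseteq[\l,\l]\w\l$, and since $[\l,\l]=[\g,\g]=\g$ (the product is direct, $[\g,V]=0$), we get $\delta(\g)\subseteq\g\w\l=\Lambda^2\g\oplus(\g\ot V)$, killing the $\Lambda^2 V$ component. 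This gives Part 3's first claim and shows $\delta_\g$ is well-defined as $(\pi_\g\w\pi_\g)\circ\delta|_\g$. That $\delta_\g$ satisfies co-Jacobi and the cocycle condition follows because $\l\to\g$ (projection) is a Lie algebra map and the induced map on $\l/V\cong\g$ is a Lie bialgebra map by Part 1; alternatively one extracts the $\Lambda^3\g$-component of co-Jacobi for $\delta$. For the cocycle condition on $\delta$ restricted to $\g$: writing out $\delta[x,y]=\ad_x\delta y-\ad_y\delta x$ and projecting onto the $\g\ot V$ summand (using that $t_i$ are $\ad_\g$-invariant) yields $E_i[x,y]=[x,E_iy]+[E_ix,y]$, so each $E_i$ is a derivation of $\g$. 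Extracting the $\Lambda^3$-part of co-Jacobi living in $\Lambda^2\g\w V$ gives the coderivation condition for $E_i$ relative to $\delta_\g$ (this is the computation reverse to the one in the proof of Theorem \ref{propmachine}, terms A and B there). Finally, the $\g\w\Lambda^2 V$-part of co-Jacobi forces $\sum_{i,j}[E_j,E_i](x)\w t_j\w t_i=\sum_i E_i(x)\w\delta_V(t_i)$, which exactly says $\DD(t_i^*):=E_i$ is a Lie algebra map $V^*\to\BiDer(\g)$. Setting $D_i:=\tfrac12 E_i$ matches the normalization in Theorem \ref{propmachine}.

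\medskip

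\textbf{Part 4.} For the isomorphism statement, the forward direction is a direct (if tedious) verification: given $\Phi=(\phi_\g,\phi_V)$, one checks $(\Phi\w\Phi)\circ\delta=\wt\delta\circ\Phi$ by evaluating on $\g$ and on $V$ separately and matching the three components $\delta_\g$, the $D_i$-term, and $\delta_V$; the transformation rule $\wt D_i=\sum_j A_{ij}\,\phi_\g D_j\phi_\g^{-1}$ is precisely what is needed so that $\sum_i(\phi_\g D_i\phi_\g^{-1})(\phi_\g x)\w\phi_V(t_i)=\sum_i\wt D_i(\phi_\g x)\w t_i$ after re-expanding $\phi_V(t_i)$ in the basis. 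For the converse, assuming $[\g,\g]=\g$: let $\Psi:(\l,\delta)\to(\l,\wt\delta)$ be any Lie bialgebra isomorphism. Since $V=\Z(\l)$ is characteristic and, by Part 1 applied to both sides, is a coideal determined intrinsically, $\Psi(V)=V$, so $\Psi$ has block-triangular form and induces $\bar\Psi:\l/V\to\l/V$, i.e. a Lie bialgebra automorphism $\phi_\g$ of $\g$ under the canonical identification $\l/V\cong\g$. Write $\Psi(x)=\phi_\g(x)+\mu(x)$ for $x\in\g$, where $\mu:\g\to V$ is linear, and $\Psi|_V=\phi_V\in\GL(V)$. Because $[\g,\g]=\g$ and $V$ is central, $\mu$ must be a Lie algebra 1-cocycle $\g\to V$ with trivial coefficients, i.e. $\mu$ vanishes on $[\g,\g]=\g$, so $\mu=0$ and $\Psi=(\phi_\g,\phi_V)$ genuinely splits. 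Then comparing $(\Psi\w\Psi)\circ\delta=\wt\delta\circ\Psi$ on $\g$ in the $\g\ot V$-component recovers exactly the stated relation between $D_i$ and $\wt D_i$, and on $V$ recovers $\wt\delta_V=(\phi_V\w\phi_V)\delta_V\phi_V^{-1}$.

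\medskip

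\textbf{Main obstacle.} I expect the delicate point to be the converse in Part 4 — specifically, pinning down that an arbitrary Lie bialgebra isomorphism respects the product decomposition $\l=\g\times V$ (not merely the ideal $V$) and carries no ``off-diagonal'' $\g\to V$ component; this is exactly where the hypothesis $[\g,\g]=\g$ is essential (it forces $\Hom_{\text{Lie}}(\g,V)=0$ for trivial $V$), and where the case $\dim V=1$ without $[\g,\g]=\g$ would require separate handling via the cohomological bookkeeping of Lemma \ref{lema:favorable}(1). The component-chasing in Parts 2–3 to extract the derivation, coderivation, and Lie-map conditions is routine but must be organized carefully by weight/degree in the $\l$-module grading of $\Lambda^3\l$ to avoid cross-terms.
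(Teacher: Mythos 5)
Your proposal is correct and follows essentially the same route as the paper: Proposition \ref{central}(2) plus $\Z(\l)=V$ and $(\Lambda^2\l)^\l=\Lambda^2V$ for item 1, Proposition \ref{central}(1) with $[\l,\l]=\g$ for item 3 (trivial when $\dim V=1$), reversing the component computation of Theorem \ref{propmachine} for the derivation/coderivation/Lie-map conditions, and for item 4 the fact that $\g=[\l,\l]$ and $V=\Z(\l)$ are characteristic. Your handling of the off-diagonal piece $\mu:\g\to V$ via the vanishing of Lie maps on $[\g,\g]=\g$ is just a rephrasing of the paper's direct observation that $\Phi(\g)=\Phi([\l,\l])=[\l,\l]=\g$.
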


\begin{proof}
Consider the decomposition
$\Lambda^2(\l)=\Lambda^2(\g)\oplus \g\w V \oplus\Lambda^2(V)$.
It is straightforward to see that if $\delta(\g)\subseteq\Lambda^2\g\oplus\g\w V$
and $\delta(V)\subseteq\Lambda^2 V$
then the implications in the proof of the theorem \ref{propmachine} can be reversed. This
will prove  items 1, 2, 3. So, let us see 
 $\delta(V)\subseteq\Lambda^2 V$ first:

By proposition \ref{central} together with $\Z(\g)=0$, we have
\[
\delta (V)= \delta (\Z(\l))\subseteq (\Lambda ^2\l)^\l = 
(\Lambda ^2\g)^\g\oplus \Z(\g)\w V\oplus \Lambda ^2 V= \Lambda ^2 V
\]
On the other hand,  $\delta(\g)\subseteq\Lambda^2\g\oplus\g\w V$ is trivial in case dim $V=1$ since in this case, $\Lambda ^2 V= 0$. 
If $\dim V>1$,  assuming $\g =[\g,\g]$ then also $\g =[\l,\l]$, so by proposition  \ref{central}  
\[
\delta(\g)=\delta([\g,\g])=\delta([\l,\l])\subset [\l,\l]\w \l=
\g\w \l =\Lambda^2\g\oplus\g\ot V
\]

4. Notice that $\wt\delta_\l=(\Phi\wedge\Phi)\circ \delta_\l\circ\Phi^{-1}$ if and only if
$\wt\delta_\g=(\phi_\g\wedge\phi_\g)\circ \delta_\g\circ\phi_\g^{-1}$,  $\wt\delta_V=(\phi_V\wedge\phi_V)\circ \delta_V\circ\phi_V^{-1}$ 
 and
\[
(\Phi\wedge\Phi)\left(\sum _i
D_i(\phi_\g^{-1}x)\wedge t_i\right)
=\sum_i\wt D_i(x)\wedge t_i
\]
The identities concerning $\wt\delta_\g$ and $\wt\delta_V$ are true by hypothesis. For the last, notice that 
\[
(\Phi\wedge\Phi)\left(D_i(\phi_\g^{-1}x)\wedge t_i\right)
=\left(\phi_\g D_i \phi_\g^{-1}(x)\right)\wedge \phi_V(t_i);
\]
write 
$ \phi_V(t_i)=\sum _jA_{ij}t_j$, then
\[
(\Phi\wedge\Phi)\left(D_i(\phi_\g^{-1}x)\wedge t_i\right)
=\sum _jA_{ij}\phi_\g\left(D_i(\phi_\g^{-1}x)\right)\wedge t_j
\]
For the converse, if $(\l,\delta_\l)$ and $ (\l,\wt\delta_\l)$ are Lie bialgebras, then we have the corresponding  
$(\delta_\g, \delta_V,  D_1,\cdots, D_d)$ and 
$(\wt\delta_\g, \wt\delta_V, \wt D_1,\cdots,\wt D_d)$. If they are isomorphic Lie bialgebras, then there exists a Lie algebra isomorphism $\Phi:\l \to \l$  such that 
$\wt\delta_\l=(\Phi\wedge\Phi)\circ \delta_\l\circ\Phi^{-1}$. It is 
necessary to prove that it induces the existence of 
$\phi_\g:\g\to\g$ and $\phi_V:V\to V$, or, in other words,
that $\Phi(\g)\subseteq \g$ and $\Phi(V)\subseteq V$.
This holds because $[\g,\g]=\g$ and $\Z(\g)=0$
imply
$\Phi(\g)=\Phi([\g,\g])=\Phi([\l,\l])=[\l,\l]= [\g,\g]=\g$ and 
$\Phi(V)=\Phi(\Z(\l))=\Z(\l)=V$.

\end{proof}

Specializing the main theorem to the case of dim $V=1$, we obtain the following.

\begin{coro}Let $\l=\g\times \K$ be a Lie bialgebra where the underlying Lie algebra is 
the product of the Lie algebra $\g$ and the field $\K=\langle t\rangle$ considered as trivial one-dimensional Lie algebra; 
suppose that $\Z (\g)=0$ and $(\Lambda^2\g)^{\g}=0$; 
then
the Lie bialgebra structures on $\l$ are determined by pairs 
$(\delta_\g,D)$, where $\delta_\g$ is a Lie bialgebra structure on $\g$ and
$D\in\BiDer(\g,\delta_\g)$.
The Lie cobracket on $\l$ is explicitly given by 
$\delta (x)=\delta_{\g}(x)+D(x)\wedge t$,
 for any $x\in\g$, and $\delta (t)=0$.
\end{coro}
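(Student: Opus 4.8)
The plan is to derive this corollary as a direct specialization of Theorem~\ref{teo-producto} to the case $\dim V=1$, with $V=\K t$ and dual basis vector $t^*$. Since $\dim V=1$, the hypothesis ``either $\dim V>1$ and $[\g,\g]=\g$, or $\dim V=1$'' is automatically satisfied, so Theorem~\ref{teo-producto} applies verbatim under the remaining assumptions $\Z(\g)=0$ and $(\Lambda^2\g)^\g=0$. First I would invoke item~3 of that theorem: any Lie bialgebra structure $\delta$ on $\l$ satisfies $\delta(x)=\delta_\g(x)+2\,D_1(x)\w t_1$ for $x\in\g$, where $D_1\in\BiDer(\g,\delta_\g)$; here $t_1=t$, so writing $D:=2D_1$ (which is again a biderivation, as $\BiDer(\g)$ is a linear subspace closed under scalars) gives $\delta(x)=\delta_\g(x)+D(x)\w t$. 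The rescaling by $2$ is purely cosmetic and just absorbs the constant appearing in Theorem~\ref{propmachine}.

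Next I would handle $\delta(t)$. By item~1 of Theorem~\ref{teo-producto}, $\delta(V)\subseteq\Lambda^2 V$; but $\dim V=1$ forces $\Lambda^2 V=0$, hence $\delta(t)=0$. This also makes $\delta_V=0$ trivially, so the datum $\delta_V$ carries no information and the structure is determined by the pair $(\delta_\g,D)$ alone, with $\delta_\g$ a Lie bialgebra structure on $\g$ by item~2. Conversely, given any such pair, Theorem~\ref{propmachine} (applied with $V^*=\K t^*$ the one-dimensional abelian Lie algebra, $\delta_V=0$, and $\DD:V^*\to\BiDer(\g)$, $t^*\mapsto D/2$, which is automatically a Lie algebra map since both source and target brackets vanish on the relevant element — $[D/2,D/2]=0$) produces precisely the cobracket $\delta(x)=\delta_\g(x)+D(x)\w t$, $\delta(t)=0$. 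Thus the correspondence $(\delta_\g,D)\leftrightarrow\delta$ is a bijection.

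There is essentially no obstacle here: the corollary is a clean unpacking of the main theorem in the degenerate case $d=1$, and the only thing to be careful about is the bookkeeping of the factor of $2$ and the observation that the Lie algebra map condition on $\DD$ is vacuous in rank one. I might also remark, for completeness, that Lemma~\ref{lema:favorable}(1) is consistent with this description, and that when $\g$ is moreover such that every derivation is inner (e.g.\ $\g$ semisimple, or $\g=\aff_2(\K)$ as in Example~\ref{ex:aff2}), Corollary~\ref{coro:bider} identifies $\BiDer(\g,\delta_\g)$ with $\Ker\delta_\g$, making the classification entirely explicit. But the proof proper needs only items 1--3 of Theorem~\ref{teo-producto} together with Theorem~\ref{propmachine}, and can be stated in a few lines.
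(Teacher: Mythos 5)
Your proposal is correct and follows exactly the route the paper intends: the corollary is stated immediately after Theorem~\ref{teo-producto} as its specialization to $\dim V=1$, and your unpacking (absorbing the factor of $2$ into $D$, noting $\Lambda^2V=0$ forces $\delta(t)=0$ and kills $\delta_V$, and observing the Lie-algebra-map condition on $\DD$ is vacuous in rank one) supplies precisely the routine details the paper leaves implicit. No gaps.
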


\begin{ex}
\label{tabla}
The following table exibits some properties of the non abelian, real 3-dimensional  Lie algebras.
We see that there are non semisimple examples of $\g$
where $\Z (\g)=0$ and $(\Lambda^2\g)^{\g}=0$;
so, the previous result applies
in order to describe Lie bialgebra structures on 4-dimensional real Lie
algebras of type
 $\g\times \R$.
\[
\begin{array}{c}
\hbox{\em Invariants of 3-dimensional real Lie algebras}\\
\begin{array}{||c|c|c|c||}
\hline
\g&\Z\g&(\Lambda^2\g)^\g&[\g,\g]\\
&&&\\
\hline
\h_3 : [x,y]= z&\R z&\R x\wedge z\oplus \R y\wedge z&\R z\\
\hline
\r_3 &&&\\
{}[h,x]=x,\ [h,y]=x+y,\ [x,y]=0&0&0&\R x\oplus \R y\\
\hline
\r_{3,\lambda} && &\\
{}[h,x]=x,\ [h,y]=\lambda y,\ [x,y]=0:&&&\\
\hdashline[0.5pt/5pt]
\lambda\in(-1,1],\lambda\neq 0&0&0&\R x\oplus \R y\\
\hdashline[0.5pt/5pt]
 \lambda=-1&0&\R x\wedge y&\R x\oplus \R y\\
\hdashline[0.5pt/5pt]
 \lambda=0&\R y&0&\R x\\
\hline
\r'_{3,\lambda}, \ \lambda\geq 0&&&\\
{}[h,x]=\lambda x-y,[h,y]=x+\lambda ,[x,y]=0&0&0&\R x\oplus \R y\\
\hline
\su(2)&0&0&\su(2)\\
\hline
\sl(2,\R)&0&0&\sl(2,\R)\\
\hline
\end{array}
\end{array}
\]
\end{ex}

The hypothesis   $\Z (\g)=0$ and $(\Lambda^2\g)^{\g}=0$  hold in the semisimple case.

\begin{coro}
\label{coross1}
Let $\g$ be a semisimple Lie algebra, (so all cocycles on $\g$
are coboundary and every derivation on $\g$ is inner), then
\begin{enumerate}
\item All Lie bialgebra structures on $\l=\g\times\K t$ are coboundary and determined by  a Lie bialgebra structure on $\g$, denoted by
$\delta_\g(x)=\ad_x(r)$, with $r\in\Lambda^2\g$ satisfying
$[r,r]\in(\Lambda^3\g)^\g$, and a biderivation $D:(\g,\delta_\g)\to(\g,\delta_\g)$, which is necessarily of the form
$\ad_H$ with $H\in\Ker\delta_\g$. 
The cobracket on $\l$
is given by
\[\delta (x)=\ad_x(r)+[H,x]\wedge t=\ad_x(r-H\w t)\]
 for any $x\in\g$ and $\delta (t)=0$.
Since we may choose $t$ up to scalar multiple,  the element
$H$ may be modify by a nonzero scalar without changing
the isomorphism class of the Lie bialgebra.

\item
Assume in addition that $(\g,\delta_\g)$ is (almost)
 factorizable,  $\delta_\g(x)=\ad_x(r)$ with $r$
 given by a BD-data, i.e. a Cartan subalgebra $\h$, simple
roots $\Delta$, a BD-triple $(\Gamma_1,\Gamma_2,\tau)$
and a continuous parameter with skewsymmetric component $\lambda\in\Lambda ^2\h$,
with $r_\Lambda$ as in equation
\eqref{rlambda}.
Then  $H\in\Ker \delta_g$ if and only if
 $H\in\h$ and $\tau\alpha (H)=\alpha (H)$ for all $\alpha\in\Gamma_1$. 
\end{enumerate}\end{coro}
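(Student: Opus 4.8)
The plan is to assemble results already established in the paper. For Part~1 I would begin from the specialization of Theorem~\ref{teo-producto} to $\dim V=1$: since $\g$ semisimple gives $\Z(\g)=0$ and $(\Lambda^2\g)^\g=0$, every Lie cobracket on $\l=\g\times\K t$ has the form $\delta(x)=\delta_\g(x)+D(x)\w t$, $\delta(t)=0$, for a Lie bialgebra structure $\delta_\g$ on $\g$ and a $D\in\BiDer(\g,\delta_\g)$. Now I feed in the special features of the semisimple case. By Whitehead's first lemma $H^1(\g,\Lambda^2\g)=0$, so $\delta_\g$ is a coboundary, $\delta_\g=\ad_{-}(r)$ with $r\in\Lambda^2\g$, and the co-Jacobi identity for $\delta_\g$ is, as recalled in the preliminaries, equivalent to $\mathrm{CYB}(r)=[r,r]\in(\Lambda^3\g)^\g$. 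By Corollary~\ref{coro:bider}, $\BiDer(\g,\delta_\g)\cong\Ker\delta_\g$ via $H\mapsto\ad_H$, and $H$ is unique since $\Z(\g)=0$; hence $D=\ad_H$ with $H\in\Ker\delta_\g$.

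It then remains to rewrite the cobracket on $\l$. Using that $t$ is central in $\l$, so $\ad_x(H\w t)=[x,H]\w t$, I get $\delta(x)=\ad_x(r)+[H,x]\w t=\ad_x(r)-\ad_x(H\w t)=\ad_x(r-H\w t)$, while $\delta(t)=0=\ad_t(r-H\w t)$; thus $\delta=\partial(r-H\w t)$ is a coboundary on $\l$, in agreement with Lemma~\ref{lema:favorable}(3). For the final scaling remark I would invoke Theorem~\ref{teo-producto}(4) with $\phi_\g=\Id_\g$ and $\phi_V$ multiplication by $c\neq 0$ on $\K t$: then $\wt D=cD=\ad_{cH}$, so replacing $H$ by $cH$ produces an isomorphic Lie bialgebra, i.e.\ $H$ is only determined up to a nonzero scalar.

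For Part~2, under the (almost) factorizable hypothesis with $r$ a Belavin--Drinfeld $r$-matrix, I would simply combine the isomorphism $\BiDer(\g,\delta_\g)\cong\Ker\delta_\g$ with Theorem~\ref{teo:main}: an element $H\in\g$ lies in $\Ker\delta_\g$ iff $\ad_H$ is a biderivation, and Theorem~\ref{teo:main} (whose proof in fact computes $\Ker\delta_\g$ directly --- Proposition~\ref{prop-n+} forces $H\in\g_{(0)}=\h$, and since the standard part and the continuous parameter have total weight zero one is left with $\ad_H(r)=\sum_{\alpha\prec\beta}(\beta(H)-\alpha(H))\,x_{-\alpha}\w x_\beta$, which vanishes exactly when $\alpha(H)=\beta(H)$ for all $\alpha\prec\beta$) says this happens precisely when $H\in\h$ and $\tau\alpha(H)=\alpha(H)$ for all $\alpha\in\Gamma_1$. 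There is no genuine obstacle here: the statement is a repackaging of Theorem~\ref{teo-producto}, Corollary~\ref{coro:bider} and Theorem~\ref{teo:main}. The one point requiring a moment's care is the identification of $\delta(x)=\ad_x(r)+[H,x]\w t$ with $\ad_x$ applied to the single element $r-H\w t\in\Lambda^2\l$, which is exactly where the centrality of $t$ in $\l$ is used.
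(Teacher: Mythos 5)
Your proposal is correct and assembles exactly the ingredients the paper relies on for this corollary (which it states without a separate proof): the $\dim V=1$ specialization of Theorem \ref{teo-producto}, Whitehead's lemma plus the co-Jacobi/CYB equivalence from the preliminaries, Corollary \ref{coro:bider} for $\BiDer(\g,\delta_\g)\cong\Ker\delta_\g$, and Theorem \ref{teo:main} for part 2. The two details you single out --- the centrality of $t$ in rewriting $\delta=\partial(r-H\w t)$, and the rescaling of $H$ via Theorem \ref{teo-producto}(4) --- are exactly the right points to check, so nothing is missing.
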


\begin{ex} {\em Lie bialgebra structures on $\gl(2,\R)=\sl(2,\R)\times \R t$}.
Let $\delta$ be any Lie bialgebra structure on $\sl(2,\R)$, 
which is a simple Lie algebra, 
$\delta=\partial r$.
From \cite{FJ}, we know that there are factorizable, almost factorizable
and triangular structures on $\sl(2,\R)$.
Let $\{x,h,y\}$ be the usual basis of $\sl(2,\R)$.
\begin{enumerate}
\item[
{\em Case 1.}] If $r=h\w x$, then $H_r=2x$ (which is not a
regular,  but a nilpotent element).
We get that 
$ah+bx+cy$ commutes with $x$ if and only if $a=c=0$,
so 
$\bider(\sl(2,\R))=\R\ad_x$.
In particular, $(\sl(2,\R),r)$ is a triangular Lie bialgebra.  
\item[
{\em Case 2.}] If $r=x\w y$ then $H_r=h$, then every biderivation
is a multiple of $\ad_h$ in this case.
 In particular, $(\sl(2,\R),r)$ is a factorizable Lie bialgebra.  
\item[
{\em Case 3.}] If $r=h\w (x+y)$ then $H_r=x-y$ is semisimple non-diagonalizable. One can easily 
check that  every biderivation is a multiple of $\ad_{x-y}$. In particular, $(\sl(2,\R),r)$ is an almost factorizable (non factorizable) Lie bialgebra.  
\end{enumerate}
Hence, we obtain the following description.

\begin{coro}
An exhaustive list of  isomorphism classes of Lie bialgebra structures on $\gl(2,\R)=\sl(2,\R)\times \R t$ is given as follows.
\begin{itemize}
\item[a)] With non-zero cobracket on $\sl(2,\R)$:
\begin{enumerate}
\item Let  
$r=\pm h\w x$ if $D=0$, or $
r=\pm  (h\w x+x\w t)$ if $D\neq 0$; in particular, $(\gl(2,\R),r)$ is a triangular Lie bialgebra. 
\item Let  $r=\beta x\w y$ if $D=0$,  or $
r=\beta(x\w y+h\w t)$ if $D\neq 0$, $\beta\in\R_+$;  in particular, $(\gl(2,\R),r)$ is a factorizable Lie bialgebra. 
\item Let $r=\alpha h\w (x+y)$ if $D=0$,  or $
r=\alpha (h\w (x+y)+(x-y)\w t)$ if $D\neq 0$, $\alpha\in\R\setminus \{0\}$;  in particular, $(\gl(2,\R),r)$ is an almost factorizable Lie bialgebra. 
\end{enumerate}

\item[b)] With zero cobracket on $\sl(2,\R)$ we have $\bider(\sl(2,\R))=\Der(\g)$, then
there are three nontrivial isomorphism classes and, in each ot them, there is a unique derivation up to the action of $\SL(2,\R)$.
In each case, $(\gl(2,\R),r)$ is a triangular Lie bialgebra.
\begin{enumerate}

\item If $D=\ad_{\wt X}$ with $\wt X$ nonzero nilpotent, conjugated to $x$,
then $r=x\w t$.
\item If $D=\ad_{\wt H}$ with $\wt H$ semisimple diagonalizable, conjugated to any non-negative  multiple of $h$, then $r=h\w t$ or zero.
\item If $D=\ad_{\wt U}$ with $\wt U$ semisimple non-diagonalizable, conjugated to any multiple of $x-y$, then $r=(x-y)\w t$.
\end{enumerate}
\end{itemize}
\end{coro}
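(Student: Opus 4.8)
The plan is to assemble this corollary directly from the machinery already in place, treating parts a) and b) in parallel. First I would invoke Corollary \ref{coross1} with $\g=\sl(2,\R)$: every Lie bialgebra structure on $\gl(2,\R)=\sl(2,\R)\times\R t$ is coboundary, determined by a pair $(\delta_\g,D)$ where $\delta_\g=\partial r$ is one of the structures on $\sl(2,\R)$ and $D\in\bider(\sl(2,\R),\delta_\g)=\ad_{\Ker\delta_\g}$, with cobracket $\delta(x)=\ad_x(r-H\w t)$ and $\delta(t)=0$ when $D=\ad_H$. So the whole classification reduces to (i) listing the $r$'s on $\sl(2,\R)$ up to isomorphism — quoted from \cite{FJ} and already recalled in Cases 1--3 above — and (ii) for each, describing $\Ker\delta_\g$ and the orbits of its nonzero elements under the relevant automorphism group, using item 4 of Theorem \ref{teo-producto}.

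For part a), I would go case by case. In Case 1, $r=\pm h\w x$, $H_r=2x$, and by Proposition \ref{hache-erre} the characteristic biderivation is $-\ad_{H_r}$, so $\Ker\delta_\g$ consists of multiples of $x$ (the computation "$ah+bx+cy$ commutes with $x$ iff $a=c=0$" is in Case 1); thus $D=0$ or $D=\ad_{cx}$, and rescaling $t$ normalizes $c$, giving $r$ or $r=\pm(h\w x+x\w t)$ after recomputing $r-H\w t=\pm h\w x - cx\w t$ up to sign and scalar. Since $r$ still satisfies CYBE — adding $H\w t$ to $r$ with $t$ central and $H\in\Ker\delta_\g$ keeps $[r,r]$ unchanged up to terms in $t$, which one checks are zero — the bialgebra is triangular. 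Cases 2 and 3 are identical in structure with $H_r=h$ (so $\Ker\delta_\g=\R h$) and $H_r=x-y$ (so $\Ker\delta_\g=\R(x-y)$) respectively; the parameters $\beta\in\R_+$, $\alpha\in\R\setminus\{0\}$ are the residual moduli for $\delta_\g$ itself after using $\Aut(\sl(2,\R))$, taken from \cite{FJ}, and the factorizable/almost factorizable labels are inherited from $\sl(2,\R)$ since the symmetric part of $r$ is unchanged.

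For part b), $\delta_\g=0$, so every derivation is a coderivation, $\bider(\sl(2,\R))=\Der(\sl(2,\R))=\Inder(\sl(2,\R))$, and the data is just a single inner derivation $D=\ad_{\wt X}$ up to the action of $\SL(2,\R)$ described in item 4 of Theorem \ref{teo-producto} (which here acts by conjugation on $\wt X$ and by rescaling on $t$). The conjugacy classes of nonzero elements of $\sl(2,\R)$ under $\SL(2,\R)$, up to positive scalar, are the three families: nilpotent ($\sim x$), semisimple diagonalizable ($\sim$ nonneg.\ multiple of $h$), semisimple non-diagonalizable ($\sim$ multiple of $x-y$); matching the three items, with $r=\wt X\w t$ since $r-H\w t = 0 - \wt X\w t$. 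Each is triangular because $r=\wt X\w t\in\Lambda^2\gl(2,\R)$ with $t$ central gives $[r,r]=0$ trivially. The main obstacle — really the only non-bookkeeping point — is verifying exhaustiveness and non-redundancy of the isomorphism classes: one must check that no two entries in the list are isomorphic and that item 4 of Theorem \ref{teo-producto} combined with the \cite{FJ} classification of $\sl(2,\R)$-structures genuinely exhausts all $\Phi=(\phi_\g,\phi_V)$, i.e.\ that every Lie bialgebra automorphism of $\gl(2,\R)$ preserves the decomposition $\sl(2,\R)\times\R t$ (which follows since $[\gl(2,\R),\gl(2,\R)]=\sl(2,\R)$ and $\Z(\gl(2,\R))=\R t$, exactly the hypotheses of the converse in item 4).
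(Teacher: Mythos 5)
Your proposal is correct and follows essentially the same route as the paper, which derives the corollary from Corollary \ref{coross1} together with the three case-by-case computations of $\bider(\sl(2,\R),\delta_\g)$ via $H_r$ and its centralizer, and uses item 4 of Theorem \ref{teo-producto} (with $[\sl_2,\sl_2]=\sl_2$, $\Z(\sl_2)=0$) to control isomorphisms. Your added observations — that the mixed CYB terms vanish because $H\in\Ker\delta_\g$ (equivalently $h\w x+x\w t=(h-t)\w x$ in Case 1), and that $r=\wt X\w t$ is automatically triangular since $t$ is central — are correct fillings-in of details the paper leaves implicit.
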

\end{ex}

\begin{obs}If  $\g$ is a simple Lie algebra, then any Lie bialgebra structure on $\g$ is either triangular or (almost) factorizable. 
If $\g$ is a semisimple Lie algebra, then any Lie bialgebra structure on it may be triangular, (almost) factorizable or none on them, depending on the situation in each component.
\end{obs}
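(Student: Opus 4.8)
The statement has two parts: (i) if $\g$ is simple, any Lie bialgebra structure on $\g$ is either triangular or (almost) factorizable; (ii) if $\g$ is semisimple, the structure can be triangular, (almost) factorizable, or none of these, depending on each simple factor. I would treat (i) first, since (ii) follows by assembling (i) across factors together with a suitable product example.

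For (i), the plan is to invoke Lemma \ref{lema:favorable} together with the theory of $r$-matrices recalled in the introduction. Since $\g$ is simple, $(\Lambda^2\g)^\g=0$ and $H^1(\g,\Lambda^2\g)$ is controlled; more to the point, every Lie bialgebra structure on a simple $\g$ is coboundary (all $1$-cocycles are coboundaries since $\g$ is semisimple), so $\delta=\partial r$ for a \emph{unique} $r\in\Lambda^2\g$ with $\mathrm{CYB}(r)\in(\Lambda^3\g)^\g$. The key dichotomy is then: either $\mathrm{CYB}(r)=0$, in which case $(\g,r)$ is triangular by definition; or $\mathrm{CYB}(r)\neq 0$. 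In the latter case I would argue — this is the substantive step — that one can symmetrize $r$ to a solution of the genuine CYBE: because $(\Lambda^3\g)^\g$ is one-dimensional for $\g$ simple, $\mathrm{CYB}(r)$ is a nonzero multiple of the invariant element $\varphi=[\Omega^{12},\Omega^{23}]$ attached to the Casimir, and after rescaling $r$ and adding a suitable multiple $t\,\Omega$ of the symmetric Casimir one obtains $r'=r+t\Omega$ with $\mathrm{CYB}(r')=0$ and $r'+r'^{21}=(\text{scalar})\,\Omega$ nondegenerate; this exhibits $(\g,\delta)$ as factorizable (or almost factorizable, over $\R$, when the relevant scalar/square-root forces passage to $\C$). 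This is precisely the content of the Belavin–Drinfeld classification invoked in Theorem \ref{BD-teo}, so I would cite \cite{BD,AJ,Del} for the fact that a coboundary Lie bialgebra on a simple $\g$ with $\mathrm{CYB}(r)\neq 0$ is (almost) factorizable, rather than reprove it.

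For (ii), let $\g=\g_1\oplus\cdots\oplus\g_k$ with each $\g_i$ simple. Any $\delta$ is still coboundary (Lemma \ref{lema:favorable} and semisimplicity), $\delta=\partial r$ with $r\in\Lambda^2\g$. Decompose $r=\sum_i r_{ii}+\sum_{i<j}r_{ij}$ along $\g_i\wedge\g_j$; the $\g$-invariance of $\mathrm{CYB}(r)$ and $(\Lambda^2\g_i\otimes\g_j)^\g=0$ for $i\neq j$ (each factor has no invariants) forces the mixed terms to contribute only invariantly, and in fact one sees $\delta$ restricts to each $\g_i$ as $\partial r_{ii}$. By part (i) each $(\g_i,\partial r_{ii})$ is triangular or (almost) factorizable. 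If all $\g_i$ are triangular then $(\g,\delta)$ is triangular; if all are factorizable then it is (almost) factorizable; but if, say, $\g_1$ is triangular with $\mathrm{CYB}(r_{11})=0$ and $\g_2$ is factorizable, then the global $r$ satisfies $\mathrm{CYB}(r)\neq 0$ yet its symmetric part is degenerate (it has no component on $\g_1$), so $(\g,\delta)$ is neither triangular nor factorizable, and the same degeneracy persists after complexifying, so it is not almost factorizable either. A concrete instance is $\g=\sl(2,\R)\times\sl(2,\R)$ with $r=h_1\wedge x_1+x_2\wedge y_2$, referring to the cases computed in the preceding example.

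The main obstacle is the passage, in part (i), from ``$\mathrm{CYB}(r)\in(\Lambda^3\g)^\g$ nonzero'' to ``(almost) factorizable'': spelled out in full this is essentially the Belavin–Drinfeld theorem, so the honest move is to \emph{cite} it (Theorem \ref{BD-teo} and \cite{BD,AJ,Del}) rather than reconstruct the normalization argument; everything else is bookkeeping with the product decomposition and the invariant-free property of simple factors.
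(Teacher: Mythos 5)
Your overall strategy is the right one, and it is essentially what the paper tacitly relies on: the remark carries no proof in the text, the simple case being exactly the content of Theorem \ref{BD-teo} together with \cite{AJ} for real forms (every cocycle on simple $\g$ is a coboundary, and since $(\Lambda^3\g)^\g$ is one-dimensional for $\g$ complex simple, a skew $r$ with $\mathrm{CYB}(r)\neq 0$ can be corrected by a multiple of $\Omega$ to a solution of CYBE with invertible symmetric part, possibly only after complexification — whence ``(almost) factorizable''), while the ``none of them'' phenomenon is realized later in the paper by $\g=\su(2)\times\sl(2,\R)$ with $r=u_1\w u_2+h\w x$, the same kind of mixed product you propose. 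Your verification that such a mixed $r$ is neither triangular nor (almost) factorizable is also sound: any $r'$ with $\ad_x(r')=\ad_x(r)$ for all $x$ differs from $r$ by an element of $(\g\ot\g)^\g=\bigoplus_i\K\,\Omega_i$, and killing $\mathrm{CYB}$ forces the coefficient of $\Omega_i$ to vanish on each triangular factor, so the symmetric part of $r'$ is degenerate on all of $\g$, even after extension of scalars.

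The one genuine flaw is the intermediate claim in part (ii) that invariance of $\mathrm{CYB}(r)$ together with $(\Lambda^2\g_i\ot\g_j)^\g=0$ forces the mixed components $r_{ij}$ ($i\neq j$) to vanish, so that ``$\delta$ restricts to each $\g_i$ as $\partial r_{ii}$.'' This is false: a Belavin--Drinfeld triple may connect simple roots of distinct simple components — e.g.\ on $\sl(2)\times\sl(2)$ take $\Gamma_1=\{\alpha_1\}$, $\Gamma_2=\{\alpha_2\}$, $\tau(\alpha_1)=\alpha_2$, which satisfies the nilpotency condition and yields an admissible $r$ containing $x_{-\alpha_1}\w x_{\alpha_2}\in\g_1\w\g_2$, so that $\delta(\g_1)\not\subseteq\Lambda^2\g_1$. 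What does survive is only that each $\g_i$, being $\l/\bigoplus_{j\neq i}\g_j$, inherits a quotient bialgebra structure via Proposition \ref{central}; the cobracket on the product is in general not the product of cobrackets. Fortunately the remark asserts only that all three behaviours occur ``depending on the situation in each component,'' so your product examples suffice and the erroneous decomposition claim should simply be deleted rather than repaired.
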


\begin{obs} {\em Dimension of the space of solutions.}
Let $\l=\g\times\K\cdot t$, with $\g$ a semisimple Lie algebra, and suppose that $(\g,\delta_\g)$ 
is (almost) factorizable such that $\delta_\g=\partial r_{\Lambda}$, with $r_{\Lambda}$ of BD-form, then
 the corollary \ref{coross1} applies. 
For each BD triple $(\Gamma_1,\Gamma_2,\tau)$,
\cite{BD} gives the dimension of the space of solutions of the skewsymmetric component of all possible continuous parameters $\lambda\in\Lambda ^2\h$, namely $\dfrac {k(k-1)}{2}$ if $k=|\Delta \setminus \Gamma _1|$. Besides, there are  $|\Gamma _1|$ amount of equations for
 the possible  $H\in\h$ such that $\tau\alpha (H)=\alpha (H)$ for all $\alpha\in\Gamma_1$; this
gives in addition $|\Delta\setminus \Gamma _1|$; 
hence, the set of pairs $(\lambda ,H)$ is an affine space of dimension
$\dfrac {k(k-1)}{2}+k=\dfrac {k(k+1)}{2}$ for each  BD triple. Since we may choose $t$ up to scalar multiple, this dimension is,
indeed, one unit less.
\end{obs}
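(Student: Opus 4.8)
The plan is to compute, for a fixed discrete parameter, the dimension of the set of admissible \emph{pairs} $(\lambda,H)$ in two independent steps, and then to quotient by the rescaling of the one-dimensional factor. Fix the Cartan subalgebra $\h$, a system of simple roots $\Delta=\{\alpha_1,\dots,\alpha_\el\}$ and a BD-triple $(\Gamma_1,\Gamma_2,\tau)$ as in Theorem \ref{BD-teo}, and set $k:=|\Delta\setminus\Gamma_1|=\el-|\Gamma_1|$.

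First I would pin down the admissible continuous parameters. Decomposing $r_0=\lambda+s$ with $\lambda\in\Lambda^2\h$ and $s$ the symmetric part forced by $r_0+r_0^{21}=\Omega_0$, the remaining conditions $(\tau\alpha\ot\id+\id\ot\alpha)r_0=0$, $\alpha\in\Gamma_1$, become affine-linear conditions on $\lambda$; the solution set $\mathcal L\subseteq\Lambda^2\h$ is the affine space whose dimension $\tfrac{k(k-1)}{2}$ is computed in \cite{BD}, and I would simply quote this.

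Next I would describe, for a fixed $\delta_\g=\partial r_\Lambda$ built from these data, the admissible $H$. By Corollary \ref{coross1}(2) (which rests on Theorem \ref{teo:main}), these are exactly the $H\in\h$ with $\alpha(H)=(\tau\alpha)(H)$ for all $\alpha\in\Gamma_1$; call this space $\mathcal H$. It is the kernel of the linear map $\h\to\K^{\Gamma_1}$, $H\mapsto((\alpha-\tau\alpha)(H))_{\alpha\in\Gamma_1}$, so $\dim\mathcal H$ equals $\el$ minus the rank of this map, and the only step I expect to require real work is showing this map is onto, equivalently that $\{\alpha-\tau\alpha:\alpha\in\Gamma_1\}$ is linearly independent in $\h^*$. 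For this, given a relation $\sum_{\alpha\in\Gamma_1}c_\alpha(\alpha-\tau\alpha)=0$, I would compare coefficients in the basis $\Delta$ of $\h^*$: this already forces $c_\delta=0$ whenever $\delta\in\Gamma_1$ and $\tau\delta\notin\Gamma_1$, and $c_\beta=c_{\tau^{-1}\beta}$ whenever $\beta\in\Gamma_1\cap\Gamma_2$. Propagating these identities along the $\tau$-strings and using the nilpotency axiom of the BD-triple (every $\tau$-orbit eventually leaves $\Gamma_1$, so there are no $\tau$-cycles), a downward induction along each string gives $c_\alpha=0$ for all $\alpha\in\Gamma_1$. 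Hence $\dim\mathcal H=\el-|\Gamma_1|=|\Delta\setminus\Gamma_1|=k$, and the admissible pairs form $\mathcal L\times\mathcal H$, an affine space of dimension $\tfrac{k(k-1)}{2}+k=\tfrac{k(k+1)}{2}$.

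Finally I would remove the ambiguity in $t$: by Corollary \ref{coross1}(1), replacing the generator $t$ by $ct$ with $c\in\K$, $c\neq0$, leaves $\delta_\g$ (hence $\lambda$) unchanged and sends $H$ to $cH$, so $(\lambda,H)$ and $(\lambda,cH)$ give isomorphic Lie bialgebras. On the open locus $H\neq0$ the isomorphism classes are then the orbits of this rescaling action on $\mathcal L\times(\mathcal H\setminus0)$, whose quotient has dimension $\tfrac{k(k-1)}{2}+(k-1)=\tfrac{k(k+1)}{2}-1$, one unit less as asserted; the locus $H=0$ (the direct product of $(\g,\delta_\g)$ with the trivial one-dimensional Lie bialgebra) is a lower-dimensional stratum. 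The main obstacle is thus the linear independence of the functionals $\alpha-\tau\alpha$ on $\h$, which is exactly where the no-$\tau$-cycle hypothesis enters; the rest is bookkeeping on top of Corollary \ref{coross1}, Theorem \ref{teo:main} and the cited count of \cite{BD}.
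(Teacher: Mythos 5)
Your proposal is correct and follows the same counting scheme as the paper's remark: quote \cite{BD} for the $\tfrac{k(k-1)}{2}$-dimensional affine space of admissible $\lambda$'s, cut out the admissible $H$'s by the $|\Gamma_1|$ linear conditions $\alpha(H)=(\tau\alpha)(H)$, add the dimensions, and subtract one for the rescaling of $t$. The one substantive difference is that the paper simply asserts that the $|\Gamma_1|$ equations drop the dimension from $\el$ to $|\Delta\setminus\Gamma_1|=k$, i.e.\ it takes their independence for granted, whereas you actually prove it: the linear independence of $\{\alpha-\tau\alpha:\alpha\in\Gamma_1\}$ in $\h^*$, obtained by comparing coefficients in the basis $\Delta$ and propagating the resulting identities $c_\beta=c_{\tau^{-1}\beta}$ along $\tau$-strings until the nilpotency axiom terminates them. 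This is exactly the point where the no-$\tau$-cycle hypothesis of a BD-triple is needed, and your argument is a genuine (and correct) addition to what the paper records; your treatment of the last step (orbits of the rescaling action on $\mathcal{L}\times(\mathcal{H}\setminus 0)$, with $H=0$ as a lower-dimensional stratum) is likewise a more careful version of the paper's ``one unit less''.
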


\begin{ex}The r-matrices corresponding to all the (almost) factorizable Lie bialgebra structures on real forms of complex simple Lie algebras  are given in \cite{AJ}. This, together with the techniques explained in this section, gives an exhaustive list of Lie bialgebra isomorphism classes on real Lie algebras of the form $\s\times V$, with $\s$ a real form of a complex simple  Lie algebra with a given (almost) factorizable structure. For instance, $\u(n)=\su(n)\times \R$, 
 $\u(p,q)=\su(p,q)\times \R$.
\end{ex}

\begin{ex}The classification of 
 the Lie bialgebra structures on three dimensional real  Lie algebras, 
both in the (almost) factorizable and in the triangular case,  is given in \cite{FJ}. This, combined with the results of this section, provides all the Lie bialgebra isomorphism classes on real Lie algebras of shape $\g\times V$, with $\g$ any  three dimensional real  Lie algebra such that $\Z\g=0$ and $(\Lambda ^2\g)^ \g=0$. For instance,
$\su(2)$, $\sl(2, \R)$, 
$\r_3$, $\r_{3,\lambda} $, with $0\neq \lambda\in(-1,1]$,
and $\r'_{3,\lambda}$ satisfy the hypothesis, 
so  the theorem \ref{teo-producto} applies for $\l=\g\times \R$.
However, among them there may be some repetitions, since in general we do not have
$[\g,\g]=\g$ if $\g$ is solvable.
\end{ex}

\subsection*{Abelian extensions with $\dim V>1$}
If $\dim V>1$, there are more possibilities  than $\DD=0$ or $\DD\neq 0$; we can stratify them
by the dimension of the image of $\DD$. 
If the image of a linear map $\DD:V^*\to \Bider(\g)$ is $d_0$-dimensional, $0\leq d_0\leq d$, consider
a basis  $\{t_1,\dots,t_d\}$ of $V$ and the corresponding
dual basis  $\{t_1^*,\dots,t_d^*\}$ of $V^*$ such that
$\{t_{d_0+1}^*,\dots,t_d^*\}$ is a basis of $\Ker\DD$, namely,
$D_1,\dots,D_{d_0}$ are linearly independent and $D_{d_0+1}=\cdots=D_d=0$.
The condition $[D_i,D_j]=\sum_{k=1}^dc_k^{ij}D_k
=\sum_{k=1}^{d_0}c_k^{ij}D_k$ determines uniquely
 $c_k^{ij}$ for $k=1,\dots ,d_0$ in terms of the constant structures
of the Lie algebra Im$(\DD)\subseteq \Bider$. In the case $(\g,\delta_\g)$  semisimple and
 factorizable, we know that $\Bider(\g,\delta_\g)\subseteq\h$ (theorem \ref{teo:main})
, which is abelian, so the general 
theorem \ref{teo-producto}
specializes in the following
result:

\begin{prop}\label{teo-producto2}
Let $\g$ be a  semisimple Lie algebra over $\K$, $V=\K^d$, the abelian
Lie algebra of dimension $d$.
 Consider
$\l=\g\times V$ the trivial abelian extension of the Lie algebra $\g$ by  $V$.
If $\delta:\l\to\Lambda^2\l$ is a Lie bialgebra structure on $\l$
such that $(\g,\delta_{\g})$
is an (almost) factorizable Lie bialgebra,  $\delta_\g(x)=\ad_x(r)$ for all $x\in\g$,
with $r$ given by a BD-data $\h$, $\Delta$, $(\Gamma_1,\Gamma_2,\tau)$, $\lambda\in\Lambda^2 \h$, then,
 there exists a basis \{$t_1,\dots,t_d\}$ of $V$ and
$H_1,\dots H_{d_0}\in \h$ linearly independent elements ($d_0\leq d$) satisfying
\[
 \alpha(H_i)=\tau\alpha(H_i)\ \forall \alpha\in\Gamma_1 ,
\ i=1,\dots,d_0
\]
such that for all $x\in \g$
\[
 \delta(x)=\delta_\g(x)+\sum_{i=1}^{d_0}[H_i,x]\wedge t_i=
\ad_x\left(r-\sum_{i=1}^{d_0}H_i\wedge t_i\right)
\]
and a Lie coalgebra structure $\delta_V:V\to \Lambda^2V$
satisfying
\[
 \delta_V t_1=\cdots=\delta_V t_{d_0}=0.
\]
\end{prop}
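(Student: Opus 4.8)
The plan is to deduce Proposition~\ref{teo-producto2} directly from the general structure theorem~\ref{teo-producto} by specializing to the semisimple factorizable case and then applying Theorem~\ref{teo:main} to identify the biderivations. First I would check that the hypotheses of Theorem~\ref{teo-producto} are met: since $\g$ is semisimple we have $\Z(\g)=0$ and $(\Lambda^2\g)^\g=0$, and moreover $[\g,\g]=\g$, so the case $\dim V>1$ is covered. Theorem~\ref{teo-producto} then tells us that any Lie bialgebra structure $\delta$ on $\l=\g\times V$ restricting to $\delta_\g$ on $\g$ is of the form in Theorem~\ref{propmachine}: there is a Lie coalgebra structure $\delta_V$ on $V$ and a Lie algebra map $\DD\colon (V^*,\delta_V^*)\to\BiDer(\g,\delta_\g)$, with $\delta(x)=\delta_\g(x)+2\sum_i D_i(x)\wedge t_i$ for $D_i=\DD(t_i^*)$.

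Next I would bring in Theorem~\ref{teo:main} (equivalently Corollary~\ref{bider-semi}): for an (almost) factorizable semisimple Lie bialgebra with $r$ of BD-form, every biderivation is $\ad_H$ for a unique $H\in\h$ satisfying $\alpha(H)=(\tau\alpha)(H)$ for all $\alpha\in\Gamma_1$; in particular $\BiDer(\g,\delta_\g)$ is abelian and sits inside $\ad_\h$. So each $D_i=\ad_{K_i}$ for some $K_i\in\h$ obeying the $\tau$-invariance equations. Now I would perform the stratification by $d_0:=\dim\mathrm{Im}(\DD)$ exactly as in the paragraph preceding the proposition: choose a basis $\{t_1,\dots,t_d\}$ of $V$ so that $\{t_{d_0+1}^*,\dots,t_d^*\}$ spans $\Ker\DD$; then $D_1,\dots,D_{d_0}$ are linearly independent and $D_{d_0+1}=\cdots=D_d=0$. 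Writing $D_i=\ad_{H_i'}$ with $H_i'\in\h$, linear independence of the $D_i$ forces linear independence of the $H_i'$ (since $\ad$ is injective on $\h\subseteq\g$ because $\Z(\g)=0$). Absorbing the factor of $2$ by setting $H_i:=2H_i'$ yields $2\sum_{i=1}^d D_i(x)\wedge t_i=\sum_{i=1}^{d_0}[H_i,x]\wedge t_i=\ad_x\bigl(-\sum_{i=1}^{d_0}H_i\wedge t_i\bigr)$, and combined with $\delta_\g(x)=\ad_x(r)$ this gives $\delta(x)=\ad_x\bigl(r-\sum_{i=1}^{d_0}H_i\wedge t_i\bigr)$. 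The $\tau$-invariance equations $\alpha(H_i)=(\tau\alpha)(H_i)$ for $\alpha\in\Gamma_1$ are inherited from Theorem~\ref{teo:main} (scaling by $2$ preserves them). Finally, since $D_{d_0+1}=\cdots=D_d=0$, the Lie algebra map $\DD$ kills $t_{d_0+1}^*,\dots,t_d^*$, and because $\DD$ being a Lie algebra map means $\DD\circ\delta_V^*=[\DD,\DD]$ in the appropriate sense, dually this is exactly the statement that $\delta_V t_1=\cdots=\delta_V t_{d_0}=0$; concretely, $\delta_V(t_i)=\sum_{j,k}c_i^{jk}t_j\wedge t_k$ with the structure constants determined by $[D_j,D_k]=\sum_\el c_\el^{jk}D_\el$, and since $\BiDer(\g,\delta_\g)$ is abelian all $[D_j,D_k]=0$, forcing $\delta_V(t_i)=0$ whenever $t_i^*$ pairs nontrivially only with the surviving $D$'s — that is, for $i\le d_0$.

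The main obstacle, and the one point deserving care, is the very last assertion that $\delta_V t_1=\cdots=\delta_V t_{d_0}=0$. One has to be precise about what "$\DD$ is a Lie algebra map" says about $\delta_V$: from the proof of Theorem~\ref{propmachine}, the identity $[D_j,D_i]=\sum_k c_k^{ji}D_k$ holds where $\delta_V(t_k)=\sum_{i,j}c_k^{ji}t_j\wedge t_i$. Since $\BiDer(\g,\delta_\g)\subseteq\ad_\h$ is abelian (Theorem~\ref{teo:main}), we get $\sum_k c_k^{ji}D_k=0$ for all $i,j$, hence $\sum_k c_k^{ji}H_k'=0$; but the $H_k'$ for $k\le d_0$ are linearly independent and $H_{d_0+1}'=\cdots=H_d'=0$, so $c_k^{ji}=0$ for all $k\le d_0$ and all $i,j$. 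This is precisely $\delta_V(t_k)=0$ for $k\le d_0$, i.e. $\delta_V t_1=\cdots=\delta_V t_{d_0}=0$. I would also remark that $\delta_V$ is otherwise unconstrained beyond defining a genuine Lie coalgebra structure on $V$ (equivalently, a Lie bracket on $V^*$), and that the choice of complement to $\mathrm{Im}(\DD)$ and of basis within it accounts for the remaining freedom. Everything else is a direct unwinding of the two cited theorems, so no new computation is needed.
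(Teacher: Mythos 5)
Your proposal is correct and follows essentially the same route as the paper, which derives Proposition~\ref{teo-producto2} by combining Theorem~\ref{teo-producto} with Theorem~\ref{teo:main} and the stratification by $d_0=\dim\mathrm{Im}(\DD)$ described in the paragraph immediately preceding the statement. Your careful unwinding of why $\delta_V t_1=\cdots=\delta_V t_{d_0}=0$ (abelianness of $\BiDer(\g,\delta_\g)\subseteq\ad_\h$ forcing $c_k^{ji}=0$ for $k\le d_0$ via linear independence of $D_1,\dots,D_{d_0}$) is exactly the argument the paper leaves implicit.
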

\begin{rem}
In the notation of the above theorem, if $d_0=d$ then $\delta _V\equiv0$.
Notice that if dim $V>1$, the structure on $\l$ is coboundary if and only if $\delta_V\equiv 0$, which was already predicted in  item 2 of
lemma \ref{lema:favorable}. The examples
with $\delta_V\neq 0$ were not covered in \cite{Del}, since this work considers only
coboundary structures.
\end{rem}
Notice that the election of the $H_i$ appearing in the theorem above depends 
on a choice of a basis for
the complement of $\Ker(\DD)\subset V^*$. If one fixes a complement (of dimension $d_0$ in the notations of the theorem),
then the action of $\GL(d_0,\K)$ acts on the set of basis of this  complement, so we see that
$\GL(d_0,\K)$ acts on the set of $d_0$-uples $(H_1,\cdots, H_{d_0})$ in the obvious way, without
changing the isomorphism class of the Lie bialgebra $\l$. The case $d_0=1$ is corollary
\ref{coross1}. The following is an example for $\dim V=2$.

\begin{ex}
\label{exdim2}
Suppose that
 $\l=\g\times V$ is a product of a semisimple Lie algebra $\g$ and an abelian Lie algebra $V$ with $\dim V=2$; 
 write $V =\langle t_1,t_2\rangle$; then
the Lie bialgebra structures on $\l$ are of three possible types:
\begin{enumerate}
\item If $\DD=0$ then $\l=\g\times V$ is a product Lie bialgebra, \ie 
$$\delta (x+v)=\delta_{\g}(x)+\delta_{V}(v)$$ for any $x\in\g,v\in V$.
For any fixed Lie bialgebra structure $\delta_{\g}$ on $\g$, there are two isomorphism classes, namely, $\delta_V=0$, or $\delta_V\neq 0$, which is the unique non-coabelian
two dimensional Lie coalgebra.

\item If Im $\DD=\k D\neq 0$ , then
\[
\delta (x)=\delta_{\g}(x)+[H,x]\w t_1,\quad
\delta t_1=0,
\quad
\delta t_2=a t_1\w t_2
\]
with $\delta_\g$ a Lie cobracket on $\g$ and $H\in\Ker(\delta_g)$. Changing $H$ by a nonzero scalar multiple,
the isomorphism class of the Lie bialgebra does not change.
We may also assume $a=0$ or $1$. Notice that if $a=1$ then
the cobracket is {\em not} coboundary.

\item If Im $\DD=\k D_1\oplus \k D_2$ of dimension two,
$D_i=\ad_{H_i}, i=1,2$, then
$$\delta (x+v)=\delta_{\g}(x)+[H_1,x]\w t_1+[H_2,x]\w t_2
+\delta_V(v)$$
 for any $x\in\g,v\in V$,  with the following restrictions:
there exists $c =0,1$
such that $[H_1,H_2]=cH_1$
and the Lie coalgebra structure
$\delta_V$
is given by
$\delta_V t_1=ct_1\w t_2$, $\delta_V t_2=0$. Notice that if the Lie bialgebra structure
$\delta_\g$ on $\g$ is factorizable, then $c=0$ and hence $\delta$ is coboundary.
\end{enumerate}
\end{ex}


\begin{ex}{\em Cremmer-Gervais.}
Consider $\l=\gl(n,\K)=\sl(n,\K)\times \K$ with $\K=\R$ or $\C$. 
Fix the Cartan subalgebra $\h$ of traceless diagonal matrices
 and the factorizable Lie bialgebra
structure on $\sl(n,\C)$ given by an r-matrix $r$ with $r+r^{21}=\Omega$ and skewsymmetric component obtained from the discrete parameter $\Gamma _1=\{\alpha_1 ,\dots,\alpha_{n-2} \}$,  $\Gamma _2=\{\alpha_2 ,\dots,\alpha_{n-1} \}$ and $\tau(\alpha_i)=\alpha_{i+1},\ 1\leq i\leq n-2$, 
and any corresponding $\lambda\in\Lambda^2\h$.
As it was proved in \cite{AJ}, this BD-data on $\sl(n,\C)$ gives place to a factorizable Lie bialgebra
 structure on $\sl(n,\R)$, considered as its split form via the usual sesquilinear involution, 
if and only if $\lambda\in\Lambda^2_\C(\h)\cap \Lambda^2_\R\sl(n,\R)=\Lambda^2_\R(\h_\R)$, if we denote by $\h_\R$ 
the Cartan subalgebra of $\sl(n,\R)$ consisting of traceless real diagonal matrices.

The equations
\[
\alpha(H)=(\tau\alpha)(H)
\]
for all $\alpha\in\Gamma_1$, $H\in\h$, form a  system of $n-2$ equations in the $n-1$ variables
which are the coefficients of $H$ in the basis
$\{H_{\alpha _1},\dots,H_{\alpha _{n-1}}\}$ of $\h$; hence the space of solutions has dimension one.
In fact, we knew by other means that the regular element 
\[
H_r:=[\ ,\ ](r_\Lambda)=\underset{\alpha\in\Phi^+}{\sum}H_{\alpha}
\]
lies in $\Ker(\delta)$,
since 
$\D=[\ ,\ ]\circ\delta=\ad_{H_r}$ is a biderivation, in virtue of propositions
\ref{propD} and \ref{propbider}.
As a consequence, all biderivations  of $(\g,r)$ are scalar multiples of $\ad_{H_r}$.
On the other hand, analogous result holds in the real case, if we consider the subspace of $\h_\R$ of  real solutions. Notice that
$H_r=[\ ,\ ](r_\Lambda)\in\sl(n,\R)$. 

Both in the complex and in the real case, we conclude that there are exactly two isomorphism classes of Lie bialgebra on $\l$
such that $\l/V=(\g,r)$, given explicitly by
\[\delta_1(x+v)=\delta_\g(x)+D(x)\w t=\ad_x(r)+[H_r,x]\w t\]
and
\[\delta_2(x+v)=\delta_\g(x)=\ad_x(r)\]

\begin{ex}Let $\l=\gl(4,\C)=\sl(4,\C)\times \C$ and  $\l_0=\gl(4,\R)=\sl(4,\R)\times \R$, denote 
also $\g =\sl(4,\C)$ and  $\g_0 =\sl(4,\R)$.
Let $\Delta=\{\alpha,\beta,
\gamma\}$ a choice of simple roots  with respect to a root system for a  given  Cartan subalgebra $\h$ of $\g$. Recall that
a basis of root vectors  of $\g$ is
$$\mathcal B=\{x_\alpha,x_\beta,x_\gamma,
x_{\alpha+\beta},x_{\beta+\gamma},x_{\alpha+\beta+\gamma},
x_{-\alpha},x_{-\beta},x_{-\alpha -\beta},x_{-\beta-\gamma},x_{-\alpha-\beta-\gamma}\}\cup\{h_\alpha,h_\beta,h_\gamma\},$$
the Cartan matrix is 
$A=\left(
\begin{array}{rrrr}
2 & -1 & 0 \\
-1 & 2 & -1 \\
0 & -1 & 2  
\end{array}\right)$
and the Dynkin diagram is
$
\begin{picture}(75,0)
\put(10,15){\hbox{$\alpha$}}
\put(40,15){\hbox{$\beta$}}
\put(70,15){\hbox{$\gamma$}}
\put(10,5){\circle{10}}
\put(15,5){\line(1,0){20}}
\put(40,5){\circle{10}}
\put(45,5){\line(1,0){20}}
\put(70,5){\circle{10}}
\end{picture}
$.
\end{ex}
In  case of the empty BD-triple, all $H\in\h$ are solutions of $\tau\alpha(H)=\alpha(H)$. 
In the following table, we list (up to isomorphism of the Dynkin diagram) all possible
non-trivial discrete parameters for $\sl(4,\C)$ and generators of the space
of solutions
$\{H\in\h : \alpha(H)=(\tau\alpha)(H)\ \forall\alpha\in\Gamma_1\}$.
Notice that $\h=\Z_\g(H_0)$ (see proposition \ref{centr=cartan}) \ie
 the initial Cartan subalgebra coincides with the centralizer
of the regular element  $H_0=[\ ,\ ](r_\Lambda)$ explicitly given by
\[
H_0=
3h_\alpha+4h_\beta+3h_\gamma=
\sum\limits_{\alpha\in\Phi^+}H_{\alpha}
\]
\begin{center}
{\em $\Gamma_1$ and $\Gamma_2$ are subsets of $\Delta$ represented by the black roots.}
	\end{center}
\[
\begin{array}{ccc}
\begin{picture}(110,55)
\put(20,45){\hbox{$\alpha$}}
\put(50,45){\hbox{$\beta$}}
\put(80,45){\hbox{$\gamma$}}

\put(20,35){\circle*{10}}
\put(25,35){\line(1,0){20}}
\put(50,35){\circle{10}}
\put(55,35){\line(1,0){20}}
\put(80,35){\circle{10}}

\put(20,5){\circle{10}}
\put(25,5){\line(1,0){20}}
\put(50,5){\circle{10}}
\put(55,5){\line(1,0){20}}
\put(80,5){\circle*{10}}

\put(20,35){\vector(2,-1){54}}
\end{picture}
&
\begin{picture}(80,55)
\put(20,45){\hbox{$\alpha$}}
\put(50,45){\hbox{$\beta$}}
\put(80,45){\hbox{$\gamma$}}

\put(20,35){\circle*{10}}
\put(25,35){\line(1,0){20}}
\put(50,35){\circle{10}}
\put(55,35){\line(1,0){20}}
\put(80,35){\circle{10}}

\put(20,5){\circle{10}}
\put(25,5){\line(1,0){20}}
\put(50,5){\circle*{10}}
\put(55,5){\line(1,0){20}}
\put(80,5){\circle{10}}

\put(20,35){\vector(1,-1){26}}
\end{picture}
&
\begin{picture}(80,55)
\put(20,45){\hbox{$\alpha$}}\put(50,45){\hbox{$\beta$}}\put(80,45){\hbox{$\gamma$}}
\put(20,35){\circle*{10}}\put(25,35){\line(1,0){20}}\put(50,35){\circle*{10}}\put(55,35){\line(1,0){20}}\put(80,35){\circle{10}}
\put(20,5){\circle{10}}\put(25,5){\line(1,0){20}}\put(50,5){\circle*{10}}\put(55,5){\line(1,0){20}}\put(80,5){\circle*{10}}
\put(20,35){\vector(1,-1){26}}\put(50,35){\vector(1,-1){26}}
\end{picture}
\\
\begin{picture}(70,30)
\put(-10,10){$H_0$, $H_1=h_\alpha+h_\gamma$}
\end{picture}
&
\begin{picture}(70,30)
\put(0,10){$H_0$, $H_2=h_\alpha+h_\beta$}
\end{picture}
&
\begin{picture}(30,10)
\put(20,10){$H_0$}
\end{picture}
\end{array}
\]
Indeed, we knew that the regular element $H_0$
lies in $\Ker(\delta)$ for $\delta$ comming from any choice of BD-triple,
because 
$\D=[\ ,\ ]\circ\delta=-\ad_{H_0}$ is a biderivation,
 in virtue of propositions
\ref{propD} and \ref{propbider},
 and it is independent of the BD-triple by inspection.

On the other hand, for the real case, 
$H_0=[\ ,\ ](r_\Lambda)\in\sl(4,\R)
$, then in particular, $\h _0:=\Z_{\g_0}(H_0)$ 
is a (real) Cartan subalgebra of $\g_0$.
For each data, it is only left to find the generators of the real space  of solutions of $\tau\alpha(H)=\alpha(H)$ for all $H\in\h_0$. 
Notice 
\[
\dim_\R\{H\in\h_0 : \alpha(H)=(\tau\alpha)(H)\ \forall\alpha\in\Gamma_1\}=\dim_\C\{H\in\h : \alpha(H)=(\tau\alpha)(H)\ \forall\alpha\in\Gamma_1\},
\]
\ie this real space 
is a real form of the complex space of solutions of the same equations viewed in $\h$.
\end{ex}

\begin{ex}{\em A non-triangular, non factorizable and not coboundary example.
}
Consider $\g=\su(2)\times \sl(2,\R)$,  $\l=\g\times\R^2$, $\{u_1,u_2,u_3\}$ a basis of $\su(2)$ with brackets $[u_i,u_j]=\sum_k\epsilon_{ijk}u_k$, where $\epsilon$ is
the totally antisymmetric symbol, and $\{h,x,y\}$ the standard basis of $\sl(2,\R)$. There are no nontrivial triangular structures in 
$\su(2)$ (see \cite{FJ}); moreover, all Lie bialgebra structures on $\su(2)$ are almost
factorizable and isomorphic to some positive multiple of  the coboundary associated to
$u_1\w u_2$. On the other hand, there are nontrivial triangular
structures in $\sl(2,\R)$, all of them isomorphic to the corresponding to $\pm h\w x$.
So, let us fix $r=u_1\w u_2+h\w x\in\Lambda^2\g$ and 
$\delta_\g(w)=\ad_w(r)$, for all $w\in\g$. 
In order to list {\em all} isomorphism classes of Lie bialgebra structures on $\l=\g\times\R^2$, we need to compute 
$\bider(\g,\delta_\g)$. Let 
\[
H_r=[-,-](r)=[u_1,u_2]+[h,x]=u_3+2x
\]
thus, by corollaries  \ref{coro:bider} and \ref{corobider}, we know that 
\[\bider(\g,\delta_\g)\cong\Ker\delta_\g\subseteq
\{w\in\g: [w,H_r]=0\}\]
For any $w=u+s\in\su(2)\times\sl(2,\R)$, we get
$ [w,H_r]=0\iff [u,u_3]=0 \hbox{ and } [s,x]=0$.
We conclude that $\bider(\g,\delta_g)$ is 2-dimensional, with
basis $\{\ad_{u_3},\ad_x\}$. In order to determine all possible
Lie bialgebra structures on $\l$ one may proceed as in example 
\ref{exdim2}. We ilustrate it showing only one possibility. Choose $\{t_1,t_2\}$ 
a basis of $\R^2$; if one defines
\[
\delta(w)=\ad_w(r)+[w,c_1 u_3+ c_2 x]\w t_1,\ 
\delta t_1=0,\
\delta t_2=t_1\w t_2
\]
for any $c_1,c_2\in\R$, then this structure is not coboundary, since $\delta|_{\R^ 2}\neq 0$.
We remark also that all non-coboundary stuctures on $\l$, such that induce
$\delta_\g$ on $\g$,
are of this form.
\end{ex}

\end{document}